\newtheorem{theorem}{Theorem}[section]
\newtheorem{lemma}[theorem]{Lemma}
\newtheorem{conjecture}[theorem]{Conjecture}
\theoremstyle{definition}
\newtheorem{example}[theorem]{Example}
\def \co {\mathcal{O}}
\def \kbar {\overline{k}}
\def \Qbar {\overline{\mathbb{Q}}}
\DeclareMathOperator{\ord}{ord}
\DeclareMathOperator{\lcm}{lcm}
\DeclareMathOperator{\Sym}{Sym}
\DeclareMathOperator{\Supp}{Supp}
\DeclareMathOperator{\Div}{Div}
\DeclareMathOperator{\codim}{codim}
\DeclareMathOperator{\dv}{div}
\begin{document}
\bibliographystyle{amsplain}
\title{On the Schmidt Subspace Theorem}
\author{Aaron Levin}
\address{Department of Mathematics\\Michigan State University\\East Lansing, MI 48824}
\curraddr{}
\email{adlevin@math.msu.edu}
\date{}
\begin{abstract}
We study extensions and generalizations of the Schmidt Subspace Theorem in various settings.  In particular, we prove results for algebraic points of bounded degree, giving a sharp version of Schmidt's theorem for quadratic points in the projective plane and a more general result that resolves a conjecture of Schlickewei.
\end{abstract}

\maketitle

\section{Introduction}

The celebrated theorem of Roth in Diophantine approximation describes how closely an algebraic number may be approximated by rational numbers:
\begin{theorem}[Roth \cite{Roth}]
\label{TRoth}
Let $\alpha\in \Qbar$ be an algebraic number.  Let $\epsilon>0$.  Then there are only finitely many rational numbers $\frac{p}{q}\in \mathbb{Q}$ satisfying
\begin{equation*}
\left|\alpha-\frac{p}{q}\right|<\frac{1}{q^{2+\epsilon}}.
\end{equation*}
\end{theorem}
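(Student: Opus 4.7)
The plan is to argue by contradiction, following Roth's auxiliary polynomial method. Suppose there were infinitely many rationals $p/q$ in lowest terms with $|\alpha - p/q| < q^{-(2+\epsilon)}$. I would fix a large integer $m$ (to be chosen in terms of $\epsilon$ and $[\mathbb{Q}(\alpha):\mathbb{Q}]$) and extract from the infinite sequence $m$ approximations $p_1/q_1, \ldots, p_m/q_m$ whose denominators grow extraordinarily rapidly, say $\log q_{i+1}$ is much larger than $\log q_i$. Next, choose positive integer weights $r_1, \ldots, r_m$ so that the products $r_i \log q_i$ are approximately equal; this balancing is what allows the later estimates to be combined uniformly.

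The first main step is the construction of an \emph{auxiliary polynomial}. Using Siegel's lemma on a system of linear equations expressing vanishing conditions, I would produce a nonzero $P(x_1, \ldots, x_m) \in \mathbb{Z}[x_1, \ldots, x_m]$, of degree at most $r_i$ in $x_i$ and with small height, such that $P$ vanishes at the diagonal point $(\alpha, \ldots, \alpha)$ to \emph{generalized index} very close to $m/2$ with respect to the weights $(r_1, \ldots, r_m)$. A dimension count shows that the number of free coefficients of $P$ comfortably exceeds the number of linear conditions imposed, so such a $P$ exists with height bounded in terms of $\sum r_i$ and a constant depending only on $\alpha$.

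The second step is a Taylor expansion estimate. Expanding each low-order partial derivative of $P$ around $(\alpha, \ldots, \alpha)$ and substituting $(p_1/q_1, \ldots, p_m/q_m)$, the hypothesis $|\alpha - p_i/q_i| < q_i^{-(2+\epsilon)}$ forces the value to be minuscule. Clearing denominators then produces an integer of absolute value less than $1$, hence zero, so $P$ must in fact vanish at the rational point to large generalized index as well. This gives a \emph{lower bound} on the index of $P$ at $(p_1/q_1, \ldots, p_m/q_m)$.

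The final and hardest step is to derive an \emph{upper bound} that contradicts this lower bound. This is the content of Roth's lemma: a polynomial of bounded multidegree and modest height cannot vanish to too large an index at a product point whose coordinates have very disparate, rapidly growing heights. Roth's lemma is itself proved by induction on $m$ via a delicate Wronskian / factorization argument, and its application here is the principal technical obstacle — one must carefully coordinate the growth of the $q_i$, the choice of weights $r_i$, and the index imposed by Siegel's lemma so that the two bounds on the index are incompatible. Choosing $m$ sufficiently large in terms of $\epsilon$ and $[\mathbb{Q}(\alpha):\mathbb{Q}]$ then closes the contradiction.
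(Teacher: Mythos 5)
The paper does not prove this statement; it cites Roth's theorem as background (attributed to \cite{Roth}) and builds on it, so there is no in-paper proof to compare against. Your sketch is a faithful high-level outline of Roth's original argument: the construction of an auxiliary polynomial via Siegel's lemma, the Taylor/denominator-clearing estimate forcing a large index at the rational point, and the contradiction via Roth's lemma on the index at points with rapidly growing, disparate denominators. As a sketch it omits the genuinely hard bookkeeping — the precise choice of the index threshold near $m/2$, the quantitative formulation and inductive Wronskian proof of Roth's lemma, and the delicate coordination of the weights $r_i$ with the gaps $\log q_{i+1}/\log q_i$ — but those omissions are acknowledged and the architecture is correct.
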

In view of a well-known elementary result of Dirichlet, the exponent $2+\epsilon$ is essentially best possible.  Roth's theorem can be appropriately extended \cite{Lan, Rid} to an arbitrary fixed number field $k$ (in place of $\mathbb{Q}$) and to allow finite sets of absolute values (including non-archimedean ones).

Instead of taking the approximating elements from a fixed number field, a natural variation on Roth's theorem is to consider approximation by algebraic numbers of bounded degree.  In this direction, Wirsing \cite{Wir} proved the following generalization of Roth's theorem.
\begin{theorem}[Wirsing]
Let $\alpha\in \Qbar$.  Let $\delta$ be a positive integer.  Let $\epsilon>0$.  Then there are only finitely many $\beta\in \Qbar$ with $[\mathbb{Q}(\beta):\mathbb{Q}]\leq \delta$ satisfying
\begin{equation*}
\left|\alpha-\beta\right|<\frac{1}{H_{\mathbb{Q}(\beta)}(\beta)^{2\delta+\epsilon}}.
\end{equation*}
\end{theorem}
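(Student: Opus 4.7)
The plan is to deduce Wirsing's theorem from the Schmidt Subspace Theorem applied to the coefficient vector of the minimal polynomial of $\beta$. Arguing by contradiction, suppose infinitely many $\beta$ satisfy the inequality; by pigeonhole on the degree, one may fix $d := [\mathbb{Q}(\beta):\mathbb{Q}] \leq \delta$ across the sequence. For each such $\beta$, let $f_\beta(x) = \sum_{i=0}^{d} a_i x^i \in \mathbb{Z}[x]$ be the primitive minimal polynomial. Standard comparisons between the Mahler measure of $f_\beta$ and its naive height give $H(f_\beta) := \max_i |a_i| \asymp H_{\mathbb{Q}(\beta)}(\beta)$, with implicit constants depending only on $d$. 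Factoring $f_\beta(\alpha) = a_d \prod_{i=1}^d (\alpha - \beta_i)$, where $\beta_1 = \beta$ and $\beta_2, \ldots, \beta_d$ are the other conjugates of $\beta$, and using the crude bound $|\alpha - \beta_i| \ll \max(1, |\beta_i|)$, one obtains
\begin{equation*}
|f_\beta(\alpha)| \ll H_{\mathbb{Q}(\beta)}(\beta) \cdot |\alpha - \beta| \ll H(f_\beta)^{1 - 2\delta - \epsilon}.
\end{equation*}

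Next, I would apply Schmidt's Subspace Theorem to the $d+1$ linearly independent linear forms over $\mathbb{Q}(\alpha)$,
\begin{equation*}
L_0(X_0, \ldots, X_d) = X_0 + \alpha X_1 + \cdots + \alpha^d X_d, \qquad L_i(X_0, \ldots, X_d) = X_i \; (1 \leq i \leq d),
\end{equation*}
evaluated at the primitive integer point $\mathbf{a}_\beta = (a_0, \ldots, a_d)$. The trivial bound $|a_i| \leq H(\mathbf{a}_\beta)$ yields
\begin{equation*}
\prod_{i=0}^{d} |L_i(\mathbf{a}_\beta)| = |f_\beta(\alpha)| \cdot \prod_{i=1}^{d} |a_i| \ll H(\mathbf{a}_\beta)^{d + 1 - 2\delta - \epsilon}.
\end{equation*}
Since $d \leq \delta$ (and $\delta \geq 1$), the exponent satisfies $d + 1 - 2\delta - \epsilon < -\epsilon'$ for some $\epsilon' > 0$, and the Subspace Theorem forces the $\mathbf{a}_\beta$ to lie in a finite union of proper $\mathbb{Q}$-rational hyperplanes in $\mathbb{Q}^{d+1}$.

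To upgrade ``finitely many hyperplanes'' to ``finitely many $\beta$'', I would iterate: within each hyperplane, restrict the $L_i$, select a maximal linearly independent subfamily, and re-apply the Subspace Theorem. The product bound on the restricted forms becomes $\ll H(\mathbf{a}_\beta)^{d - 2\delta - \epsilon}$, still strictly negative since $d \leq \delta$, and after $k$ restrictions the analogous exponent $d + 1 - k - 2\delta - \epsilon$ stays negative. The ambient dimension strictly decreases at each step, terminating at one-dimensional subspaces, which contain only finitely many primitive integer vectors up to sign. The main obstacle is precisely this descent: one must carefully track heights relative to the shrinking ambient subspace and ensure the restricted forms remain linearly independent at each stage. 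But the robustness of the exponent bound for $d \leq \delta$ makes the procedure close in at most $d$ iterations, giving the desired contradiction.
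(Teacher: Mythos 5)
The paper does not prove Wirsing's theorem; it only states it as known, with a citation to Wirsing's original article, as background for the higher-dimensional results. So there is no ``paper's own proof'' to compare against; what you have written is a genuinely independent route. Your plan — write down the coefficient vector of the primitive minimal polynomial, compare $H(f_\beta)$ with $H_{\mathbb{Q}(\beta)}(\beta)$ via the Mahler measure, feed the factorization $|f_\beta(\alpha)| \ll H(f_\beta)\,|\alpha-\beta|$ into the one-variable Subspace Theorem with the forms $L_0 = \sum_i \alpha^i X_i$ and $L_i = X_i$, then descend through the exceptional rational subspaces — is a standard and correct way to recover the $2\delta+\epsilon$ exponent from the Subspace Theorem. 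It is anachronistic relative to Wirsing's own 1971 argument, which predates Schmidt's theorem and instead works directly with resultants/Mahler-measure estimates and a Roth-type index argument, but the exponent $d+1-2\delta-\epsilon \le -\epsilon$ (using $d \le \delta$, $\delta \ge 1$) is indeed negative, and it degrades favorably under the descent as you say. Two small points worth being explicit about when you flesh out the descent: you must always retain $L_0$ in the independent subfamily (its kernel only captures conjugates of $\alpha$, of which there are finitely many, so this is harmless when $L_0|_V \equiv 0$), since dropping it destroys the Diophantine input; and when you parametrize each exceptional subspace $V$ by a $\mathbb{Z}$-basis of $V \cap \mathbb{Z}^{d+1}$ the ambient height and the height on the parametrizing lattice are comparable with constants depending only on $V$, which is fine because there are finitely many $V$ at each stage. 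With those two observations made precise, the descent closes in at most $d$ steps as you claim.
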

Here $H_{k}(x)$ is the multiplicative Weil height of $x$ relative to the number field $k$.  The case $\delta=1$ recovers Roth's theorem.  In this formulation of Wirsing's theorem (which we've chosen for simplicity), Schmidt \cite{Sch} showed that the exponent $2\delta+\epsilon$ in the theorem could be improved to $\delta+1+\epsilon$, which is best possible.  However, as in Roth's theorem, Wirsing's theorem can be proved in a more general setting, allowing approximation at a finite set of places and replacing $\mathbb{Q}$ by an arbitrary number field.  In this more general setting the exponent $2\delta+\epsilon$ is sharp (Example~\ref{exinf}).

In a different direction, Schmidt \cite{Sch2} proved a deep higher-dimensional generalization of Roth's theorem, the Subspace Theorem.  We state a general formulation of the theorem, including subsequent improvements by Schlickewei \cite{Schl} to allow for arbitrary number fields and finite sets of places and a result of Vojta \cite{Voj} on the independence of the exceptional hyperplanes from the choices of certain parameters.  For a hypersurface $H\subset \mathbb{P}^n$ over a number field $k$ defined by a homogeneous form $f\in k[x_0,\ldots, x_n]$ of degree $d$ and a place $v$ of $k$, define a local Weil function for $H$ with respect to $v$ by
\begin{equation*}
\lambda_{H,v}(P)=\log \max_i \frac{|x_i|_v^d}{|f(P)|_v},
\end{equation*}
where $P=(x_0,\ldots, x_n)\in\mathbb{P}^n(k)\setminus H$ (see Section \ref{snd} for the definition of $|\cdot|_v$).
\begin{theorem}[The Subspace Theorem]
Let $S$ be a finite set of places of a number field $k$.  For each $v\in S$, let $H_{0,v},\ldots,H_{n,v}\subset \mathbb{P}^n$ be hyperplanes over $k$ in general position.  Let $\epsilon>0$.  Then there exists a finite union of hyperplanes $Z\subset \mathbb{P}^n$, depending only on $\cup_{v\in S}\cup_{1\leq i\leq n} H_{i,v}$, such that  the inequality
\begin{equation*}
\sum_{v\in S}\sum_{i=0}^n \lambda_{H_{i,v},v}(P)< (n+1+\epsilon)h(P)
\end{equation*}
holds for all but finitely many points $P\in \mathbb{P}^n(k)\setminus Z$.
\end{theorem}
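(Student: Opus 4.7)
The plan is to prove the Subspace Theorem by reformulating the inequality as a product over linear forms, reducing via the geometry of numbers to a situation in which those forms are in a standard position relative to the solutions, and then running a multivariable Roth-type auxiliary polynomial argument. Choose linear forms $L_{i,v}\in k[x_0,\ldots,x_n]$ cutting out $H_{i,v}$. After unwinding the definition of the Weil functions and applying the product formula, what must be shown is that
\[
\prod_{v\in S}\prod_{i=0}^{n} \frac{\max_j |x_j|_v}{|L_{i,v}(P)|_v} \geq H(P)^{\,n+1+\epsilon}
\]
holds only for $P$ in a finite union of hyperplanes together with finitely many additional points; I will assume for contradiction the existence of an infinite Zariski-dense sequence of counterexamples.

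The next step is the geometry of numbers. For each $v\in S$ I slice the range of sizes $|L_{i,v}(P)|_v$ dyadically and associate to each slice an adelic parallelepiped $\Pi=\prod_{v\in S}\Pi_v$ whose total adelic volume is controlled by $H(P)^{-\epsilon}$. Applying Minkowski's theorem on successive minima in its adelic form (Bombieri--Vaaler/McFeat), together with Davenport's lemma replacing $\Pi$ by one in \emph{good position} relative to a lattice basis, reduces the problem to finitely many combinatorial types: after passing to a subsequence and changing $v$-adic basis at each $v\in S$, the forms $L_{i,v}$ become approximate coordinate forms with respect to explicit $k$-rational bases. This is where the $\epsilon$-budget is spent and where one extracts the distinguished directions along which the solutions accumulate.

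The heart of the proof, and the main obstacle, is the multivariable Roth construction. Extract a subsequence $P^{(1)},P^{(2)},\ldots$ whose heights grow very rapidly, $h(P^{(j+1)})\gg h(P^{(j)})$. By Siegel's lemma I construct a nonzero multi-homogeneous polynomial $F\in k[\mathbf x^{(1)},\ldots,\mathbf x^{(m)}]$ of controlled multi-degree $(d_1,\ldots,d_m)$ and small logarithmic height (compared to $\sum_j d_j h(P^{(j)})$) which vanishes at $(P^{(1)},\ldots,P^{(m)})$ to a prescribed multi-index order. Product-formula estimates at the places of $S$, combined with the directional information produced by the geometry-of-numbers step, force the index of $F$ at $(P^{(1)},\ldots,P^{(m)})$ to exceed what Roth's lemma in $m$ variables allows, provided the $d_j$ and the height ratios are chosen appropriately. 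The careful calibration of these parameters, and the $m$-variable Roth lemma itself, is the technical core; everything else is adelic bookkeeping.

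Finally, the Vojta refinement that $Z$ depends only on $\bigcup_{v\in S}\bigcup_{i\geq 1} H_{i,v}$, independently of $\epsilon$, follows by tracking which hyperplanes can ever arise from the Zariski-closure step of the contradiction argument. They are cut out by forms whose coefficients are drawn from the finitely many combinatorial types produced by the geometry-of-numbers reduction applied to the fixed hyperplane arrangement; varying $\epsilon$ only changes the quantitative thresholds, not the finite family of possible exceptional hyperplanes. This yields the desired uniform exceptional set $Z$ and completes the plan.
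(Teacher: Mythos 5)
The paper does not give a proof of this statement; it is cited as a black box from Schmidt, Schlickewei, and Vojta (the last for the uniformity of the exceptional set), and then used as input to the results that the paper actually proves. So there is no ``paper's own proof'' against which to compare your argument. On its own merits, your outline is a recognizable summary of the classical Schmidt--Schlickewei strategy (product-formula reformulation, adelic parallelepipeds and successive minima, Davenport's lemma, a multi-homogeneous Siegel/Roth auxiliary polynomial against an $m$-variable Roth's lemma), and in that sense the approach is the right one.

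However, at this level of compression the outline is a roadmap rather than a proof, and the one step you treat as easy is in fact the one that is genuinely new relative to Schmidt and Schlickewei. The claim that $Z$ depends only on $\bigcup_{v\in S}\bigcup_i H_{i,v}$ and not on $\epsilon$ (or on $k$, $S$) is Vojta's refinement and does not follow from ``tracking which hyperplanes can arise from the Zariski-closure step.'' In Schmidt's argument the exceptional subspaces are produced nonconstructively as spans of adelic successive-minima vectors, and a priori these depend on $\epsilon$ through the parallelepiped dimensions; Vojta's contribution is a careful reorganization of the successive-minima analysis (via the quantitative subspace theorem of Schmidt and Evertse, and a filtration of the solution set) showing that a fixed finite family suffices. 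You would need to import that argument explicitly; as written, the final paragraph asserts the conclusion without a mechanism. The calibration of multidegrees $d_j$ against height ratios $h(P^{(j+1)})/h(P^{(j)})$ and the $m$-variable Roth's lemma are likewise stated as achievable without the index bookkeeping that makes or breaks that step, but this is acceptable in an avowed sketch; the Vojta refinement is the substantive gap.
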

Like Roth's fundamental result, the Subspace Theorem has a wide range of important and surprising applications (see \cite{Bilu} for a recent survey).  We will be interested here in proving versions of Schmidt's theorem for points of bounded degree, along the lines of Wirsing's generalization of Roth's theorem.

For quadratic points in the projective plane, we prove the following theorem.
\begin{theorem}
\label{8th}
Let $S$ be a finite set of places of a number field $k$.  Let $L_{0},\ldots,L_{q}\subset\mathbb{P}^2$ be lines over $k$ in general position.  Let $\epsilon>0$.  Then the inequality
\begin{equation*}
\sum_{i=0}^q \sum_{v\in S}\sum_{\substack{w\in M_{k(P)}\\w\mid v}}\lambda_{L_i,w}(P)< (8+\epsilon)h(P)
\end{equation*}
holds for all but finitely many points $P\in \mathbb{P}^2(\kbar)\setminus \cup_{i=1}^qL_i$ satisfying $[k(P):k]\leq 2$.
\end{theorem}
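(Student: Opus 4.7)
The strategy is to pass from quadratic points $P\in\P^2(\kbar)$ to $k$-rational points on a higher-dimensional variety via Galois symmetrization. Concretely, if $\sigma$ generates $\mathrm{Gal}(k(P)/k)$, then the unordered pair $\{P,P^\sigma\}$ defines a $k$-rational point $Q_P$ of the symmetric square $X=\Sym^2(\P^2)$. For each line $L_i$, the $S_2$-invariant divisor $(L_i\times\P^2)+(\P^2\times L_i)$ on $\P^2\times\P^2$ descends to a $k$-rational effective divisor $D_i$ on $X$, and a standard comparison of Weil functions yields
\begin{equation*}
\sum_{w\mid v}\lambda_{L_i,w}(P)=\lambda_{D_i,v}(Q_P)+O(1)
\end{equation*}
for each $v\in S$, converting the left side of the theorem into a sum of Weil functions on $X$ evaluated at a $k$-rational point.

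Next I would apply a hypersurface version of the Subspace Theorem (\emph{e.g.,} Evertse--Ferretti or Ru--Vojta) to the collection $(D_0,\dots,D_q)$ of divisors on $X$, which are in general position away from a proper closed subvariety. Combined with the height comparison $h_X(Q_P)=2h(P)+O(1)$, arising because the natural polarization of $X$ pulls back to $\co(1,1)$ on $\P^2\times\P^2$, this produces a bound of the form $(c+\epsilon)h(P)$, with $c$ determined by the geometry of the $D_i$ on $X$.

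The main obstacle is extracting the sharp constant $c=8$: a naive application of the hypersurface Subspace Theorem on the $4$-dimensional variety $X$ would yield only $c=(\dim X+1)\cdot 2=10$. To reach $8$ one must exploit the specific structure of the $D_i$---for instance by passing to the smooth resolution $\mathrm{Hilb}^2(\P^2)$ and computing intersection numbers in its Picard group, or by directly computing the Ru--Vojta Nevanlinna-type invariant $\beta(D_i)$ for the relevant divisor class---together with a gap-principle-style argument at each place $v\in S$ (at most two of the lines $L_i$ can be simultaneously close to a generic quadratic $P$ at a fixed place). The exceptional set of $P$ whose associated $Q_P$ lies in the Zariski-closed set produced by the Subspace Theorem must then be handled separately, by a geometric analysis of each component or by induction on dimension.
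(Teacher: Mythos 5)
Your setup is correct and matches the paper's: the Galois symmetrization to $\Sym^2\mathbb{P}^2$, the descent of $(L_i\times\mathbb{P}^2)+(\mathbb{P}^2\times L_i)$ to a divisor on the symmetric square, the Weil-function comparison, and the height factor of $2$ are all exactly as in the paper's proof. You are also right that a naive application of the Evertse--Ferretti theorem on the four-fold $\Sym^2\mathbb{P}^2$ yields $10$, not $8$. But beyond this point there is a genuine gap: you list several \emph{possible} routes to close it (Hilbert scheme, $\beta$-invariants, a gap principle), but none is carried out, and none is what the paper actually does. Moreover your gap-principle count is off: at a fixed place $v$ the quadratic point $P$ has (up to) two places $w\mid v$, so up to \emph{four} of the symmetrized divisors $L_i^{(2)}$ can be $v$-adically close to $Q_P$, not two; the $L_i^{(2)}$ are in $4$-subgeneral position on the $4$-fold, which is exactly general position, so this observation alone does not beat $(4+1)\cdot 2=10$.

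The key structural idea you are missing is that the constant $8$ does \emph{not} come from the symmetric-square argument at all. In the paper, the symmetric-square argument is pushed to give $\tfrac{15}{2}$ off a finite union of lines (via embedding $\Sym^2\mathbb{P}^2\hookrightarrow\mathbb{P}^5$, recognizing that any five of the induced hyperplanes meet in a point because of the conic-through-five-dual-points phenomenon, and then an averaging trick applying Schmidt's theorem three or four times to different quadruples and combining, plus a delicate case analysis of the lower-dimensional exceptional pieces). The exceptional set one cannot avoid is a finite union of lines, and on each such line the sharp bound for quadratic points is governed by a Wirsing/Song--Tucker inequality on curves: $q+1$ lines in general position restrict to a divisor in $2$-subgeneral position on the curve, and the Song--Tucker bound $2\delta$ gives $m_{D,S}(P)<(2\cdot 2\cdot 2+\epsilon)h(P)+O(1)=(8+\epsilon)h(P)+O(1)$ for quadratic points on the curve. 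The final constant $8$ is the maximum of $\tfrac{15}{2}$ (off the exceptional lines) and $8$ (on them), and the $O(1)$ is absorbed by the finiteness of points of bounded degree and height. Without this two-tier structure---a curve estimate for the exceptional lines and a strictly better-than-$8$ bound off them---the approach you sketch will not reach $8$, and the unresolved steps you gesture at ($\beta$-invariants, Picard computations on $\mathrm{Hilb}^2$) are not known to produce the required improvement.
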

This theorem is sharp in the sense that the constant $8$ on the right-hand side cannot be replaced by any smaller number (Example \ref{exinf}).  If we allow finitely many exceptional lines, then we can give a small improvement.
\begin{theorem}
\label{15th}
Let $S$ be a finite set of places of a number field $k$.  Let $L_{0},\ldots,L_{q}\subset\mathbb{P}^2$ be lines over $k$ in general position.  Let $\epsilon>0$.  Then there exists a finite union of lines $Z\subset \mathbb{P}^2$, depending only on $L_0,\ldots,L_q$, such that the inequality
\begin{equation}
\label{pin}
\sum_{i=0}^q \sum_{v\in S}\sum_{\substack{w\in M_{k(P)}\\w\mid v}}\lambda_{L_i,w}(P)< \left(\frac{15}{2}+\epsilon\right)h(P)
\end{equation}
holds for all but finitely many points $P\in \mathbb{P}^2(\kbar)\setminus Z$ satisfying $[k(P):k]\leq 2$.
\end{theorem}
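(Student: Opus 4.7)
The strategy is to transfer the problem about quadratic points $P \in \mathbb{P}^2(\kbar)$ into a Subspace-Theorem-type statement about $k$-rational points on the product $Y := \mathbb{P}^2 \times \mathbb{P}^2$. For $P$ with nontrivial Galois conjugate $\sigma P$, the unordered pair $Q := (P, \sigma P) \in Y(k)$ satisfies $h(Q) = 2 h(P)$ in the sum-of-hyperplanes class $H := H_1 + H_2$, and for each line $L_i = \{f_i = 0\}$,
\[
\sum_{\substack{w \in M_{k(P)}\\ w \mid v}} \lambda_{L_i,w}(P) \;=\; \lambda_{L_i,v}(P) + \lambda_{L_i,v}(\sigma P) \;=\; \lambda_{D_i,v}(Q),
\]
where $D_i := L_i \times \mathbb{P}^2 + \mathbb{P}^2 \times L_i$ is a divisor in the linear system $|H|$. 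The contribution of $P \in \mathbb{P}^2(k)$ (i.e.\ $Q$ on the diagonal) is handled separately by the ordinary Subspace Theorem, which already yields the much stronger bound $(3+\epsilon)h(P)$, so one may restrict throughout to $[k(P):k]=2$.

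With this reduction, the target becomes $\sum_{v \in S}\sum_i \lambda_{D_i,v}(Q) < (\tfrac{15}{4} + \epsilon) h_H(Q)$ for the symmetrized points $Q$, outside a finite union of proper subvarieties of $Y$. The plan is to apply a version of the Subspace Theorem for projective varieties, in the style of Evertse--Ferretti or Ru--Vojta, to the four-dimensional variety $Y$ and the divisors $D_i$ (equivalently, one could descend to $\Sym^2 \mathbb{P}^2 \hookrightarrow \mathbb{P}^5$). A direct application for divisors in the class $H$ yields only the weaker constant $8$ corresponding to Theorem~\ref{8th}. To sharpen this to $15/2$, the key step is a Corvaja--Zannier-style filtration of the space of sections of $mH$ for $m \gg 0$, chosen at each place $v \in S$ adapted to the most-approximated divisor $D_{j(v)}$; the extra saving arises from the reducible structure $D_j = L_j \times \mathbb{P}^2 + \mathbb{P}^2 \times L_j$, which forces sections of $mH$ vanishing to high order along $D_j$ to vanish along at least one of the two pulled-back components, shrinking the graded pieces of the filtration more rapidly than a naive bidegree count would predict.

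The exceptional set $Z$ emerges naturally from this filtration: it is the image in $\mathbb{P}^2$ of those positive-dimensional base loci in $Y$ on which the graded pieces degenerate. Because these loci depend only on the $D_i$, hence only on $L_0,\ldots,L_q$, the set $Z$ likewise depends only on these lines; and because the relevant base loci are cut out by sections of bidegree $(1,0)$ or $(0,1)$, their projection to either factor of $Y$ is a finite union of lines. The main obstacles I expect are: (i) carrying out the filtration calculation precisely enough to extract the constant $15/4$ per unit of $h_H(Q)$ (equivalently, $15/2$ per unit of $h(P)$), rather than the cruder $4$ underlying Theorem~\ref{8th}; and (ii) verifying that the degenerate loci in $Y$ project to $k$-rational lines in $\mathbb{P}^2$, so that $Z$ has the form asserted in the statement. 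The numerical optimization in (i) is the technical heart of the argument.
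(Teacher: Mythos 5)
Your opening reduction — passing from quadratic points on $\mathbb{P}^2$ to $k$-rational points on $\mathbb{P}^2 \times \mathbb{P}^2$ (equivalently $\Sym^2\mathbb{P}^2$) via symmetrization — is correct and is exactly what the paper does. But from there your plan diverges from the paper's proof and, more importantly, has real gaps at the two places you yourself flag as "the technical heart."

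The central gap is the claim that a Corvaja--Zannier filtration of $H^0(Y, mH)$ adapted to the most-approximated divisor $D_{j(v)}$ will yield the constant $15/2$. You give a heuristic (sections vanishing along $D_j$ must vanish along one of its two factors) but no calculation, and there is no reason to believe this mechanism produces $15/2$ rather than something weaker. The paper does not get $15/2$ this way at all. Its main inequality comes from embedding $\Sym^2\mathbb{P}^2 \hookrightarrow \mathbb{P}^5$, slicing by a generic $\mathbb{P}^4$, and then running a purely combinatorial averaging argument (its Lemma~\ref{5lem}): one applies the ordinary Subspace Theorem in $\mathbb{P}^4$ to triples drawn from the four relevant hyperplanes, using the fact (Lemma~\ref{lemcon}) that the five quadratic forms $l_i(x,y,z)l_i(\tilde x,\tilde y,\tilde z)$ coming from any five lines in general position are linearly independent, so that at most one of the four triples can degenerate. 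Summing three good inequalities and dividing by $2$ gives $15/2$. Filtration arguments à la Corvaja--Zannier do appear in the paper, but only in the residual step, and there they give a term of the form $(n+1+\epsilon)h_A$ per $\lcm$ pair via inequality~\eqref{fundineq}, not a direct route to $15/2$.

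The second gap is the treatment of the exceptional set. You assert it "emerges naturally from the filtration" and that "the relevant base loci are cut out by sections of bidegree $(1,0)$ or $(0,1)$," hence project to finitely many lines. This is not justified and is, in fact, where the paper spends most of its effort. After Lemma~\ref{5lem} one is left with rational points on certain surfaces $V$ inside $\Sym^2\mathbb{P}^2$ (cut out by two quadratic equations), and the paper has to run a genuinely delicate case analysis (Cases I, II, III of the proof of Theorem~\ref{thmain}), tracking which of the restricted divisors $L_i^{(2)}|_V$ share components, using blow-up geometry, the $\lcm$ trick, and the curve result Theorem~\ref{thl}(b) to finally reduce the exceptional locus from a finite union of curves to a finite union of lines. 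None of that is anticipated in your plan.

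There is also a small arithmetic slip: with the paper's normalizations, $\sum_{w\mid v}\lambda_{L_i,w}(P) = \tfrac12\lambda_{D_i,v}(Q)$ and $h_H(Q) = 2h(P)$, so the constant does not change under symmetrization; the target on $Y$ is $(\tfrac{15}{2}+\epsilon)h_H(Q)$, not $(\tfrac{15}{4}+\epsilon)h_H(Q)$.
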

Conjecturally (see Section \ref{scon}) the inequality \eqref{pin} should hold with $\frac{15}{2}$ replaced by $6$ (and if one allows higher degree exceptional curves, with $\frac{15}{2}$ replaced by $5$).

We also obtain results in higher degrees and higher dimensions, in a more general context.  We recall that generalizations of the Subspace Theorem to projective varieties have been given, independently, by Corvaja and Zannier \cite{CZ} and Evertse and Ferretti \cite{EF}.  We state a version due to Evertse and Ferretti.

\begin{theorem}[Evertse and Ferretti]
\label{EF}
Let $X$ be a projective subvariety of $\mathbb{P}^N$ of dimension $n\geq 1$ defined over a number field $k$.  Let $S$ be a finite set of places of $k$.  For $v\in S$, let $H_{0,v},\ldots,H_{n,v}\subset \mathbb{P}^N$ be hypersurfaces over $k$ such that
\begin{equation*}
X\cap H_{0,v}\cap\cdots\cap H_{n,v}=\emptyset \text{ for } v\in S.
\end{equation*}
Let $\epsilon>0$.  Then there exists a proper Zariski-closed subset $Z\subset X$ such that for all points $P\in X(k)\setminus Z$,
\begin{equation*}
\sum_{v\in S}\sum_{i=0}^n \frac{\lambda_{H_{i,v},v}(P)}{\deg H_{i,v}}< (n+1+\epsilon)h(P).
\end{equation*}
\end{theorem}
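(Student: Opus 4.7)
The plan is to embed $X$ into a large projective space using a high multiple of a very ample line bundle, then apply the classical Schmidt Subspace Theorem in that ambient space to linear forms arising from a carefully chosen filtration of the global sections by powers of the defining forms $f_{i,v}$. This is the Corvaja--Zannier / Evertse--Ferretti strategy.

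After replacing each defining form $f_{i,v}$ by $f_{i,v}^{d/\deg H_{i,v}}$ for $d = \lcm_{i,v}\deg H_{i,v}$, I may assume that all $H_{i,v}$ have a common degree $d$. For $N$ a large integer to be fixed later, set $V_N = H^0(X, \mathcal{O}_X(Nd))$ and $M_N = \dim V_N = \frac{(Nd)^n \deg X}{n!} + O(N^{n-1})$ by Hilbert--Samuel. Since $\mathcal{O}_X(Nd)$ is very ample, a $k$-basis $g_1, \dots, g_{M_N}$ of $V_N$ induces a closed embedding $\phi \colon X \hookrightarrow \mathbb{P}^{M_N-1}$. For each $v \in S$ and each $\mathbf{a} = (a_0, \dots, a_n) \in \mathbb{Z}_{\geq 0}^{n+1}$ with $|\mathbf{a}| \leq N$, define
\begin{equation*}
W_v^{\mathbf{a}} = \sum_{\mathbf{b} \geq \mathbf{a}} f_{0,v}^{b_0}\cdots f_{n,v}^{b_n}\cdot H^0(X, \mathcal{O}_X((N-|\mathbf{b}|)d)) \subseteq V_N,
\end{equation*}
and choose a basis $B_v = \{s_{1,v}, \dots, s_{M_N,v}\}$ of $V_N$ adapted to this filtration, so that each $s_{j,v} = f_{0,v}^{a_0(j,v)}\cdots f_{n,v}^{a_n(j,v)}\,t_{j,v}$ for some $t_{j,v} \in H^0(X, \mathcal{O}_X((N-|\mathbf{a}(j,v)|)d))$. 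The heart of the proof is a Mumford-style dimension bound
\begin{equation*}
\sum_{j=1}^{M_N} a_i(j,v) \ \geq \ \frac{N\,M_N}{n+1} - C(X,n)\,N^n \qquad \text{for each } 0\leq i \leq n \text{ and } v \in S,
\end{equation*}
which rests crucially on the hypothesis $\bigcap_i H_{i,v} \cap X = \emptyset$: this makes the graded pieces of the filtration coincide to leading order in $N$ with twists of $\mathcal{O}_X$, and symmetric averaging over $i = 0, \dots, n$ produces the factor $1/(n+1)$.

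Identifying $V_N$ with $k^{M_N}$ via $g_1, \dots, g_{M_N}$, each $s_{j,v}$ becomes a linear form on $k^{M_N}$, and the $M_N$ forms in each $B_v$ are linearly independent. The Subspace Theorem in $\mathbb{P}^{M_N-1}$ then yields a finite union of hyperplanes $\tilde{Z}$ such that
\begin{equation*}
\sum_{v \in S}\sum_{j=1}^{M_N}\lambda_{s_{j,v},v}(\phi(P)) \ < \ (M_N + \epsilon')\,h(\phi(P))
\end{equation*}
holds for all but finitely many $\phi(P) \in \mathbb{P}^{M_N-1}(k)\setminus \tilde{Z}$. Set $Z = \phi^{-1}(\tilde{Z}\cap \phi(X))$, a proper Zariski-closed subset of $X$. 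The factorization of each $s_{j,v}$, combined with standard Weil-function machinery, yields after summing and absorbing local normalization errors into the global height
\begin{equation*}
\sum_{v \in S}\sum_{j=1}^{M_N}\lambda_{s_{j,v},v}(\phi(P)) \ \geq \ \sum_{v \in S}\sum_{i=0}^n\left(\sum_{j=1}^{M_N} a_i(j,v)\right)\lambda_{H_{i,v},v}(P) - O(M_N).
\end{equation*}
Combining with the Mumford-style estimate, using $h(\phi(P)) = Nd\,h(P) + O(1)$, and dividing through by $\frac{N\,M_N}{n+1}$ yields the desired bound with constant $(n+1)(1 + O(1/N))$ on the right, which is $< n+1 + \epsilon$ once $N$ is chosen large enough.

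The main obstacle is the Mumford-style dimension bound in the key step: extracting the sharp constant $1/(n+1)$ requires a careful symmetrization over the $n+1$ indices together with precise accounting of Hilbert-polynomial error terms, and is possible only because the ``no common zero'' assumption forces the $f_{0,v}, \dots, f_{n,v}$ to behave like a regular sequence on $X$ in an asymptotic sense. A secondary technical point is ensuring that $\tilde{Z}\cap \phi(X)$ is a proper Zariski-closed subset of $\phi(X)$, which holds once the $g_k$ are chosen so that $\phi(X)$ is non-degenerate in $\mathbb{P}^{M_N-1}$ (equivalently, the embedding is by a complete linear system).
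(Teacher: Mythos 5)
The paper does not prove this theorem---it is quoted verbatim from Evertse--Ferretti \cite{EF} (with the analogous result of Corvaja--Zannier \cite{CZ} also cited), so there is no in-paper proof to compare against. Your sketch is a faithful high-level outline of the published proof strategy: twist up to a common degree, filter the global sections by divisibility by powers of the defining forms $f_{i,v}$, choose a basis adapted to the filtration for each place, feed the resulting linear forms into the linear Subspace Theorem on $\mathbb{P}^{M_N-1}$, and count.

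Two places where the sketch conceals where the real work lives. First, the family $\{W_v^{\mathbf{a}}\}$ is indexed by a \emph{partially} ordered set of multi-indices, so ``choose a basis adapted to the filtration'' with a well-defined $\mathbf{a}(j,v)$ for each basis element is not automatic: an element can lie in $W_v^{\mathbf{a}}$ and $W_v^{\mathbf{a}'}$ for incomparable $\mathbf{a},\mathbf{a}'$ without lying in $W_v^{\max(\mathbf{a},\mathbf{a}')}$. The published arguments resolve this by collapsing to totally ordered chains, e.g., filtering by a rational weighted sum $\sum_i c_i a_i$ and optimizing or averaging over the weight vector $\mathbf{c}$. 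Second, the lower bound $\sum_j a_i(j,v)\geq \tfrac{N M_N}{n+1}-O(N^n)$ is precisely the combinatorial/commutative-algebra heart of the theorem; it is what uses the hypothesis $X\cap H_{0,v}\cap\cdots\cap H_{n,v}=\emptyset$, via the resulting Hilbert-function control on the graded pieces. Asserting it ``by symmetry'' does not carry the proof: the individual pieces $W_v^{\mathbf{a}}$ are far from symmetric in the indices, and the $1/(n+1)$ only emerges after a careful dimension count. These are not signs of a wrong approach---your outline matches the EF/CZ route---but they are exactly the steps where a complete proof must supply the substance.
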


Along the same lines, we prove the following theorem for points of bounded degree.
\begin{theorem}
\label{genth}
Let $X$ be a projective subvariety of $\mathbb{P}^N$ of dimension $n\geq 1$ defined over a number field $k$.  Let $S$ be a finite set of places of $k$.  For $v\in S$, let $H_{0,v},\ldots,H_{n,v}\subset \mathbb{P}^N$ be hypersurfaces over $k$ such that
\begin{equation*}
X\cap H_{0,v}\cap\cdots\cap H_{n,v}=\emptyset \text{ for } v\in S.
\end{equation*}
Let $\delta\geq 1$ be an integer and $\epsilon>0$.  Then the inequality
\begin{equation}
\label{Schineq}
\sum_{v\in S}\sum_{\substack{w\in M_{k(P)}\\w\mid v}}\sum_{i=0}^n \frac{\lambda_{H_{i,v},w}(P)}{\deg H_{i,v}}< \left((\delta n)^2\left(\frac{\delta n-1}{2\delta n-3}\right)+\epsilon\right)h(P)
\end{equation}
holds for all but finitely many points $P\in X(\kbar)\setminus \cup_{v\in S}\cup_{i=0}^nH_{i,v}$ satisfying $[k(P):k]\leq \delta$.
\end{theorem}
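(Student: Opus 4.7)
The plan is to reduce Theorem~\ref{genth} to an application of the Evertse-Ferretti Theorem~\ref{EF} for $k$-rational points, by passing from a point of degree $\leq \delta$ on $X$ to a $k$-rational point on a symmetric power of $X$.

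First I would set up the standard Galois-theoretic reduction. Given $P\in X(\kbar)$ with $d:=[k(P):k]\leq\delta$ and conjugates $P_1,\ldots,P_d$ under the embeddings of $k(P)$ into $\kbar$ over $k$, the unordered tuple $\{P_1,\ldots,P_d\}$ is Galois stable, so it determines a $k$-rational point $\tilde P$ on $\Sym^d(X)$. After padding with a fixed $k$-rational base point of $X$ one may assume $d=\delta$, and so produce a $k$-rational point $\tilde P$ on the projective variety $Y:=\Sym^\delta(X)$ of dimension $\delta n$. One then fixes an embedding $Y\hookrightarrow\P^M$ coming from a symmetric Veronese of the given embedding $X\subset\P^N$, normalized so that $h_Y(\tilde P)=\delta h_X(P)+O(1)$.

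Next I would compare Weil functions. For a hypersurface $H\subset \P^N$, the divisor $\sum_{j=1}^{\delta}\pi_j^*H$ on $X^\delta$ descends to a symmetric divisor $\tilde H$ on $Y$, and a local computation with the chosen embedding yields
\begin{equation*}
\lambda_{\tilde H,v}(\tilde P)=\sum_{\substack{w\in M_{k(P)}\\ w\mid v}}\lambda_{H,w}(P)+O(1).
\end{equation*}
Consequently the entire left-hand side of \eqref{Schineq} is, up to bounded terms, a sum of Weil functions on $Y$ evaluated at a $k$-rational point, suitable for input into Theorem~\ref{EF}. The hypothesis $X\cap H_{0,v}\cap\cdots\cap H_{n,v}=\emptyset$ translates into the needed emptiness condition on $Y$, since a common point would descend coordinate-wise to a point of $X$ lying on every $H_{i,v}$.

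The substantive step is to apply Theorem~\ref{EF} on $Y\subset\P^M$ to the induced divisors. Here one faces a mismatch: $\dim Y+1=\delta n+1$, whereas only $n+1$ original divisors are available per place $v$. To manufacture enough hypersurface sections of $Y$ in the correct (empty-intersection) general position, one forms Galois-combined pullbacks of the $H_{i,v}$ along the various projections and produces, via a Corvaja-Zannier style analysis on $Y$, a large collection of global sections of $\co_Y(dD)$ with prescribed order of vanishing along the $\tilde H_{i,v}$. An embedding of $Y$ defined by such sections then turns the left-hand side of \eqref{Schineq} into a Subspace-Theorem quantity on the ambient projective space.

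The hard part is pinning down the precise constant $(\delta n)^2(\delta n-1)/(2\delta n-3)$. Asymptotically in $\delta n$ this is $(\delta n)^2/2$, consistent with combining Schmidt's exponent $\delta n+1$ on a variety of dimension $\delta n$ with the factor $\delta$ between $h_Y(\tilde P)$ and $h_X(P)$ and with a combinatorial loss coming from the fact that the pulled-back divisors are far from being in general position. The precise shape of the constant should emerge from a Hilbert-function / Okounkov-body style optimization of the filtration of $H^0(Y,\co_Y(dD))$ by order of vanishing along the $\tilde H_{i,v}$; this combinatorial optimization, together with bookkeeping for the normalization of local heights at places $w\mid v$ of $k(P)$, is where I expect essentially all of the technical difficulty to concentrate.
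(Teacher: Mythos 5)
Your symmetric-power reduction is exactly the paper's first move (Section~\ref{slog}, Theorem~\ref{compth}): pass from a degree-$\le\delta$ point on $X$ to a $k$-rational point on $\Sym^\delta X$, with the Weil-function and height identities you describe. You are also right that the mismatch between $\dim\Sym^\delta X=\delta n$ and the $n+1$ available divisors per place is the crux. But the way you propose to resolve it is not what works, and two essential ideas are missing. First, the paper does not try to manufacture extra hypersurfaces in general position on $\Sym^\delta X$; instead it proves a version of Schmidt's theorem directly for divisors in $m$-subgeneral position (Theorem~\ref{degen}). The mechanism is: for each ordered pair $(i,j)$, build a basis of $L(NA)$ adapted simultaneously to the two order-of-vanishing filtrations along $D_{i,v}$ and $D_{j,v}$ (Lemmas~\ref{RR}, \ref{fil}, \ref{lhyp}), whose associated hyperplanes are in general position and pull back to dominate $\lcm(D_{i,v},D_{j,v})$, giving the bound $\sum_{v\in S}\lambda_{\lcm(D_{i,v},D_{j,v}),v}(P)<(n+1+\epsilon)h_A(P)$ outside a proper subvariety. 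Summing over the $m(m-1)$ pairs and invoking the combinatorial inequality $\sum_{i\ne j}\lcm(D_{i,v},D_{j,v})\ge(m+n-2)\sum_l D_{l,v}$ --- which uses the fact that $m$-subgeneral position limits how many $D_{j,v}$ can share a component with a fixed $D_{i,v}$ to $m-n$ --- produces the constant $m(m-1)(n+1)/(m+n-2)$. Your single-filtration ``Okounkov-body optimization'' does not see the pairwise-lcm structure, and constructing general-position hypersurfaces from scratch on $\Sym^\delta X$ would at best recover a weaker bound.

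Second, your outline does not explain why the exceptional set is finite, which is what Theorem~\ref{genth} asserts. A direct application of Schmidt's theorem on $\Sym^\delta X$ only excludes a proper Zariski-closed set, whose preimage in $X(\kbar)$ is typically infinite. The paper's bookkeeping via the functions $A(m,n)$, $B(l,m,n)$, $C(\delta,l,m,n)$ in Section~\ref{slog} exists precisely to propagate control of the exceptional-set dimension through the symmetric-power step; and within the proof of Theorem~\ref{degen}, the observation that $m(m-1)(n+1)/(m+n-2)$ is increasing in $n$ lets one induct on irreducible components of the exceptional set down to dimension zero. Without this descent your argument would end with an unspecified exceptional hypersurface of $\Sym^\delta X$, not a finite exceptional set in $X(\kbar)$.
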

In fact, we prove a somewhat more general theorem (Theorem \ref{galg}).  In the case where the $H_{i,v}$ are hyperplanes and $X=\mathbb{P}^n=\mathbb{P}^N$, Schlickewei conjectured \cite[Conjecture 5.1]{Schl2} that there exists a constant $c(\delta,n)$, depending only on $\delta$ and $n$, and a finite union of hyperplanes $Z\subset\mathbb{P}^n$ such that the left-hand side of the inequality \eqref{Schineq} is bounded by $(c(\delta,n)+\epsilon)h(P)$ for all points $P\in \mathbb{P}^n(\kbar)\setminus Z$ satisfying $[k(P):k]\leq \delta$.  Thus, Theorem \ref{genth} proves Schlickewei's conjecture as a special case.

Previous work studying Schmidt's theorem for algebraic points appears to be limited to partial results of Locher and Schlickewei \cite[Th.\ 5.3]{Schl2} and Ru and Wang \cite{RW}.  We refer the reader to \cite{Schl2} for the statement of Locher and Schlickewei's technical result.  We now describe the result of Ru and Wang.  For a linear form $l=\sum_{i=0}^n a_ix_i$, let $\bigodot_t l$ denote the $t$-th fold symmetric tensor product of $l$.  This is an element in a vector space of dimension $\binom{n+t}{t}$.
\begin{theorem}[Ru-Wang]
Let $S$ be a finite set of places of a number field $k$.  Let $H_1,\ldots, H_q\subset \mathbb{P}^n$ be hyperplanes defined by linear forms $l_1,\ldots,l_q$ over $k$.  Let $\delta$ be a positive integer.  Suppose that for any positive integer $t\leq \delta$, any $\binom{n+t}{t}$ distinct elements of $\{\bigodot_t l_1,\ldots,\bigodot_t l_q\}$ are linearly independent.  Let $\epsilon>0$.  Then
\begin{equation*}
\sum_{v\in S}\sum_{\substack{w\in M_{k(P)}\\w\mid v}}\sum_{i=0}^n \lambda_{H_i,w}(P)<\left(2\binom{n+\delta}{\delta}-2+\epsilon\right)h(P)+O(1)
\end{equation*}
for all points $P\in \mathbb{P}^n(\kbar)\setminus \cup_{i=1}^qH_i$ satisfying $[k(P):k]\leq \delta$.
\end{theorem}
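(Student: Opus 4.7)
The plan is to apply the $\delta$-th Veronese embedding to convert the hypothesis on symmetric tensor products into ordinary general position for hyperplanes in a higher-dimensional projective space, and then to invoke a subspace-type bound for algebraic points of bounded degree with hyperplanes in general position, yielding the constant $2(N+1)-2=2N$.

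Let $N+1=\binom{n+\delta}{\delta}$ and let $\phi\colon\mathbb{P}^n\hookrightarrow\mathbb{P}^N$ denote the $\delta$-th Veronese embedding. The form $\bigodot_\delta l_i$ is, by construction, a linear form $\tilde l_i$ on $\mathbb{P}^N$ with $\phi^*\tilde l_i = l_i^\delta$, so the corresponding hyperplane $\tilde H_i\subset\mathbb{P}^N$ satisfies $\phi^*\tilde H_i=\delta H_i$, and hence
\begin{equation*}
\lambda_{\tilde H_i,w}(\phi(P))=\delta\,\lambda_{H_i,w}(P)+O(1),\qquad h(\phi(P))=\delta\,h(P)+O(1).
\end{equation*}
The $t=\delta$ case of the hypothesis says precisely that the $\tilde H_i$ are in general position in $\mathbb{P}^N$. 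Setting $Q=\phi(P)$, so that $[k(Q):k]=[k(P):k]\leq\delta$, the theorem is equivalent to
\begin{equation*}
\sum_{v\in S}\sum_{w\mid v}\sum_i\lambda_{\tilde H_i,w}(Q)<\bigl(2(N+1)-2+\epsilon\bigr)h(Q)+O(1)
\end{equation*}
for algebraic points $Q$ of degree $\leq\delta$ in $\mathbb{P}^N$ with hyperplanes in general position.

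For the reduced statement, the natural tool is the Galois-conjugate/symmetric-product trick. Let $d=[k(Q):k]\leq\delta$ and let $Q=Q_1,\ldots,Q_d$ be the Galois conjugates of $Q$ over $k$. Then $(Q_1,\ldots,Q_d)$ descends to a $k$-rational point $\tilde Q$ of the symmetric product, embedded via $\Sym^d(\mathbb{P}^N)\hookrightarrow\mathbb{P}^M$. For each linear form $l$ on $\mathbb{P}^N$, the tensor product $\bigodot_d l$ corresponds to a linear form $\bar l$ on $\mathbb{P}^M$ satisfying $\bar l(\tilde Q)=\prod_{j=1}^d l(Q_j)$, and the product formula then gives $\sum_{w\mid v}\lambda_{\tilde H_i,w}(Q)=\lambda_{\bar{\tilde H}_i,v}(\tilde Q)+O(1)$ together with $h(\tilde Q)=d\,h(Q)+O(1)$. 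Applying Schmidt's Subspace Theorem, or the Evertse--Ferretti refinement of Theorem~\ref{EF} on the symmetric-product subvariety, to $\tilde Q$ with the hyperplanes $\bar{\tilde H}_i$, and using general position (the $\lambda$-sum over all $i$ is dominated, up to $O(1)$, by the sum over the $N+1$ ``worst'' indices at each place), produces a bound of the correct order. The lower-$t$ hypotheses enter precisely to ensure the required genericity of the $\bar{\tilde H}_i$ when restricted to the image of $\Sym^d(\mathbb{P}^N)$ for $d<\delta$.

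The main obstacle is extracting the \emph{sharp} constant $2(N+1)-2=2N$: a naive application of Schmidt to $\tilde Q$ would yield a bound of size $\delta(N+1)$ (summing independently across conjugates) or $dN+1$ (from Evertse--Ferretti on the $dN$-dimensional symmetric product), neither of which reaches $2N$ in general. The improvement requires exploiting the $S_d$-symmetry of the symmetric product to couple the contributions from the distinct conjugates, together with a Wirsing-style estimate: indeed, $2N$ is the higher-dimensional analogue of Wirsing's classical exponent $2\delta$ for approximation on $\mathbb{P}^1$ by bounded-degree algebraic numbers, suggesting that the sharp constant is obtained via a Wronskian-style derivative count applied in $\mathbb{P}^N$ after the Veronese step. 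The ``$+O(1)$'' in the conclusion (rather than exclusion of a finite Zariski-closed exceptional set) is consistent with such a uniform Wirsing-style argument, which typically produces absolute rather than merely asymptotic inequalities.
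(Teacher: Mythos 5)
This is a theorem the paper only cites from Ru and Wang \cite{RW} (it appears in the introduction purely for comparison with Theorems~\ref{8th} and~\ref{genth}); the paper contains no proof of it, so there is no internal argument to compare your sketch against. Judged on its own, your proposal has a genuine gap that you yourself flag: after the Veronese reduction you land on the statement that
\[
\sum_{v\in S}\sum_{w\mid v}\sum_i\lambda_{\tilde H_i,w}(Q)<\bigl(2N+\epsilon\bigr)h(Q)+O(1)
\]
for degree-$\leq\delta$ points $Q$, and you then note that neither a direct Subspace-Theorem application on the symmetric product ($\delta(N+1)$) nor Evertse--Ferretti on the $\delta N$-dimensional $\Sym^\delta$ ($\delta N+1$) reaches $2N$; the closing appeal to an unspecified ``Wronskian-style derivative count'' is not an argument. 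Worse, the ``equivalence'' you assert at the Veronese step cannot hold as stated: a bound of $2N$ for \emph{arbitrary} degree-$\leq\delta$ points in $\mathbb{P}^N$ with hyperplanes in general position is \emph{false} for $\delta>1$ --- the paper's own Example~\ref{exinf} shows the constant $2\delta N$ is attained, and $2\delta N>2N$. The only reason the target $2N$ could be within reach is that $Q=\phi(P)$ ranges over the $n$-dimensional Veronese image, not all of $\mathbb{P}^N$; any correct proof must use this constraint in an essential way, and your outline never does.

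The confusion about the lower-$t$ hypotheses reinforces this: you invoke them ``to ensure genericity when restricted to $\Sym^d(\mathbb{P}^N)$ for $d<\delta$,'' but after composing the $\delta$-Veronese with $\Sym^d$ the relevant genericity conditions concern $\bigodot_d(\bigodot_\delta l_i)$, not $\bigodot_t l_i$, so the hypothesis as stated doesn't slot in where you claim. What Ru and Wang actually do --- passing to the symmetric product $\Sym^d\mathbb{P}^n$ of the \emph{original} $\mathbb{P}^n$ (not of $\mathbb{P}^N$), following Stoll, and then applying Cartan/Schmidt in a suitable embedding where the $\bigodot_t$ hypotheses govern the linear algebra --- is a different organization of the argument; you cannot recover it by first Veronese-lifting to $\mathbb{P}^N$ and then forgetting the image. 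As written, the proposal correctly identifies the tools (Veronese, symmetric products, Subspace Theorem) but does not prove the stated constant $2\binom{n+\delta}{\delta}-2$.
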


Even if $H_1,\ldots, H_q$ are in general position, for any $t\geq 2$, $\bigodot_t l_1,\ldots,\bigodot_t l_q$ may have many nontrivial relations \cite[Ex.\ 1]{RW}.
For the purposes of comparison with Theorems \ref{8th} and \ref{genth}, when the hypotheses of the Ru-Wang theorem are satisfied it gives a better bound only in the following cases:  $n=2, \delta\neq 2$; $\delta=2, n\neq 2$; $n=3, \delta\leq 6$; $\delta=3, n\leq 6$.

We now discuss the analogous topics and results in Nevanlinna theory.  As discovered initially by Osgood and Vojta, there is a striking correspondence between statements in Diophantine approximation and statements in Nevanlinna theory.  We refer the reader to \cite{Voj3} for Vojta's dictionary between the two subjects as well as the basic notation and definitions from Nevanlinna theory used below.  Under the Diophantine-Nevanlinna correspondence, Roth's theorem is analogous to Nevanlinna's Second Main Theorem.

\begin{theorem}[Nevanlinna's Second Main Theorem]
Let $f$ be a meromorphic function on $\mathbb{C}$ and let $a_1,\ldots, a_q\in \mathbb{C}$ be distinct numbers.  Then for all $\epsilon>0$,
\begin{equation*}
\sum_{i=1}^qm_f(a_i,r)\leq (2+\epsilon)T_f(r)
\end{equation*}
for all $r>0$ outside a set of finite Lebesgue measure.
\end{theorem}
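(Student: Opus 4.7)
The plan is to follow Nevanlinna's classical argument via an auxiliary meromorphic function together with the lemma on the logarithmic derivative, which asserts that for each $a \in \mathbb{C}$,
$$m\left(\frac{f'}{f - a}, r\right) = o(T_f(r))$$
as $r \to \infty$ outside a set of finite Lebesgue measure. This lemma is the technical core; once it is available, the rest of the proof is short and essentially formal. I will also use the first main theorem $m_f(a,r) + N_f(a,r) = T_f(r) + O(1)$ freely.

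The first step is to introduce the auxiliary function
$$\phi(z) = \sum_{i=1}^q \frac{1}{f(z) - a_i}$$
and show $\sum_{i=1}^q m_f(a_i, r) \leq m_\phi(\infty, r) + O(1)$. This rests on the elementary observation that if $\delta = \min_{i \neq j} |a_i - a_j| > 0$, then at any $z$ at most one of the quantities $|f(z) - a_i|$ can be smaller than $\delta/2$; hence a single term dominates $\phi(z)$, and $\sum_i \log^+ \frac{1}{|f(z) - a_i|}$ differs from $\log^+ |\phi(z)|$ by a bounded quantity depending only on $q$ and $\delta$. Integrating over $|z| = r$ gives the claim.

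The second step is to estimate $m_\phi(\infty, r)$. Writing
$$\phi = \frac{1}{f'} \sum_{i=1}^q \frac{f'}{f - a_i},$$
one obtains
$$m_\phi(\infty, r) \leq m_{1/f'}(\infty, r) + \sum_{i=1}^q m\left(\frac{f'}{f - a_i}, r\right) + O(1).$$
The sum on the right is $o(T_f(r))$ by the logarithmic derivative lemma, and $m_{1/f'}(\infty, r) = m_{f'}(0, r) \leq T_{f'}(r) + O(1)$ by the first main theorem. Since the poles of $f'$ occur precisely at the poles of $f$ with orders increased by exactly one, one has $N_{f'}(\infty, r) \leq 2 N_f(\infty, r)$, while $m_{f'}(\infty, r) \leq m_f(\infty, r) + m(f'/f, r)$, so that $T_{f'}(r) \leq 2 T_f(r) + o(T_f(r))$. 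This is precisely where the constant $2$ appears.

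Combining these estimates yields
$$\sum_{i=1}^q m_f(a_i, r) \leq 2T_f(r) + o(T_f(r))$$
off a set of finite Lebesgue measure, and the error $o(T_f(r))$ can be absorbed into $\epsilon T_f(r)$. The genuine obstacle is the logarithmic derivative lemma itself, whose proof proceeds via the Poisson-Jensen formula applied to $f$ (differentiated to extract $f'/f$) together with a careful analysis of the resulting boundary integrals; this is classical and well-documented, so in a self-contained write-up I would simply cite it and concentrate on the formal reduction above.
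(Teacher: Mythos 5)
The paper does not prove this theorem; it is quoted in the introduction as a classical background result (Nevanlinna's Second Main Theorem), with no proof given, in order to set up the analogy between Diophantine approximation and value distribution theory. So there is no ``paper proof'' to compare against.

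Your proposal is the standard Nevanlinna argument, and the outline is correct. The reduction $\sum_i m_f(a_i,r)\leq m_\phi(\infty,r)+O(1)$ for $\phi=\sum_i (f-a_i)^{-1}$ is right: taking $\delta=\min_{i\neq j}|a_i-a_j|$ and a threshold $\eta$ small compared to $\delta/(q-1)$, at most one term of $\phi$ can be large at a given point, the remaining $q-1$ terms are $O(1/\delta)$, and so $\log^+|\phi|$ and $\max_i\log^+\frac{1}{|f-a_i|}$ agree up to a constant depending only on $q$ and $\delta$. The factorization $\phi=\frac{1}{f'}\sum_i\frac{f'}{f-a_i}$, the bound $m(1/f',r)\leq T_{f'}(r)+O(1)$ from the First Main Theorem, the inequality $N_{f'}(\infty,r)=N_f(\infty,r)+\overline{N}_f(\infty,r)\leq 2N_f(\infty,r)$, the estimate $m_{f'}(\infty,r)\leq m_f(\infty,r)+m(f'/f,r)$, and the resulting $T_{f'}(r)\leq 2T_f(r)+o(T_f(r))$ off an exceptional set are all correct, and the constant $2$ indeed enters exactly where you say. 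You are also right that the only non-formal input is the lemma on the logarithmic derivative (with the standard exceptional set of finite measure), so citing it is the reasonable choice for a write-up at this level of detail. One small remark: as written your argument actually yields the stronger conclusion $m_f(\infty,r)+\sum_i m_f(a_i,r)\leq 2T_f(r)+o(T_f(r))$ if you retain $N_{f'}(0,r)$ and use $T_{f'}\leq T_f+\overline{N}_f(\infty,r)+o(T_f)$, but for the statement as quoted (finite $a_i$ only, and a clean $(2+\epsilon)$ bound) your coarser estimates already suffice.
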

The proximity function $m_f$ is analogous to ($-\log$ of) the left-hand side of the inequality in Roth's theorem, while the characteristic function $T_f$ is analogous to the height $h$.  Continuing the analogy, Schmidt's Subspace Theorem corresponds to the Second Main Theorem of Cartan.  In fact, to obtain the precise analogue of Schmidt's theorem, one must use the following form of Cartan's theorem due to Vojta \cite{Voj4}.

\begin{theorem}[Cartan's Second Main Theorem]
\label{VC}
Let $H_1,\ldots H_q$ be hyperplanes in $\mathbb{P}^n$ with corresponding Weil functions $\lambda_{H_1},\ldots,\lambda_{H_q}$.  Then there exists a finite union of hyperplanes $Z\subset\mathbb{P}^n$ such that for any $\epsilon >0$ and any nonconstant holomorphic map $f:\mathbb{C}\to \mathbb{P}^n$ with $f(\mathbb{C})\not\subset Z$, the inequality
\begin{equation}
\int_{0}^{2\pi} \max_I \sum_{i \in I} \lambda_{H_i}(f(re^{i\theta}))\frac{d\theta}{2\pi} \leq (n+1+\epsilon)T_f(r)
\end{equation}
holds for all $r$ outside a set of finite Lebesgue measure, where the max is taken over subsets $I \subset \{1,\ldots,q\}$ such that the hyperplanes $H_i,i \in I$, are in general position.
\end{theorem}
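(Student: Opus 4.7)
The proof will follow the classical Cartan--Vojta strategy, exploiting Wronskians and Nevanlinna's lemma on the logarithmic derivative. The plan is first to reduce to the case of linearly nondegenerate $f$ via induction on $n$, then to establish a pointwise Wronskian identity for each general-position subset of size $n+1$, and finally to integrate in $\theta$ using the logarithmic derivative lemma.

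First I would handle the reduction. If $f(\mathbb{C})$ is contained in a proper linear subspace $V \subsetneq \mathbb{P}^n$, project to $V \cong \mathbb{P}^{n-1}$ and apply the theorem in lower dimension by induction on $n$. The exceptional set $Z$ accumulates, through this induction, as a finite union of hyperplanes depending only on the combinatorial configuration of the $H_i$; requiring $f(\mathbb{C}) \not\subset Z$ ensures we never enter a degenerate inductive subcase. For a general-position subset $I$ with $|I| \le n$, the First Main Theorem already gives $\sum_{i\in I} m_f(H_i,r) \le n\,T_f(r) + O(1)$, and when $|I| > n+1$, at any point $f(z)$ at most $n$ of the linear forms $l_i(f(z))$ can simultaneously be small, so the contribution reduces to that of a size-$(n+1)$ subsubset. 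Hence the essential case is $|I|=n+1$.

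Assuming $f$ is nondegenerate, fix a reduced entire representation $F=(F_0,\ldots,F_n)$ of $f$, linear forms $l_i$ defining $H_i$, and set $L_i := l_i\circ F$. For each $I=\{i_0,\ldots,i_n\}$ in general position, the $L_{i_j}$ span the same $\mathbb{C}$-vector space as $F_0,\ldots,F_n$, so the Wronskian satisfies $W(L_{i_0},\ldots,L_{i_n}) = c_I\cdot W(F_0,\ldots,F_n)$ for some $c_I\in\mathbb{C}^*$. Factoring $\prod_j L_{i_j}$ out of the rows of the determinant yields the pointwise identity
\begin{equation*}
\sum_{j=0}^n \lambda_{H_{i_j}}(f(z)) = (n+1)\log\|F(z)\| - \log|W_F(z)| + \log\bigl|\det(L_{i_j}^{(k)}/L_{i_j})_{j,k}(z)\bigr| + O(1).
\end{equation*}
Integrating in $\theta$ over $|z|=r$, Jensen's formula gives $\int\log\|F(re^{i\theta})\|\,\frac{d\theta}{2\pi} = T_f(r)+O(1)$, while $-\int\log|W_F|\,\frac{d\theta}{2\pi} = -N_{W_F}(r,0)+O(1)\le O(1)$ since $W_F$ is entire. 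For the last term, bound $\max_I \log|\det(\cdot)_I|$ by $\sum_I \log^+|\det(\cdot)_I|$ (over the finitely many valid $I$); each summand is controlled by a sum of proximity functions $m(L_i^{(k)}/L_i,r)$ of logarithmic derivatives. The classical lemma on the logarithmic derivative then gives $m(L_i^{(k)}/L_i,r)=o(T_f(r))$ for $r$ outside a set of finite Lebesgue measure, producing the desired $\epsilon T_f(r)$ error after summing over finitely many $I$.

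The main obstacle will be twofold. First, one must control the $\max_I$ inside the integrand: the maximizing $I$ depends on $\theta$, so the Wronskian appearing in the identity is itself $\theta$-dependent. Here the key technical input is that there are only finitely many general-position subsets $I\subset\{1,\ldots,q\}$, so the max can be absorbed into a finite sum with only a bounded loss before applying the logarithmic derivative lemma. Second, and more delicate, is arranging $Z$ to depend only on the $H_i$ (and not on $\epsilon$ or $f$); this is precisely Vojta's refinement of the classical Cartan theorem, achieved by a careful inductive bookkeeping of which linear subspaces can arise as the Zariski closure of $f(\mathbb{C})$ in a degenerate subcase.
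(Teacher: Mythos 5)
The paper does not prove this theorem. It is stated (as Theorem 1.7) solely as a known result, attributed to Vojta \cite{Voj4}, and is used later only as an analogue to motivate the Nevanlinna-theoretic versions of the paper's arithmetic results. There is no proof in the paper to compare your proposal against.

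Evaluating your proposal on its own merits: the skeleton you describe is the right one for the classical Cartan argument (reduced representations, Wronskians, factoring $\prod L_{i_j}$ from the determinant rows, Jensen's formula, the lemma on the logarithmic derivative, and replacing $\max_I$ by $\sum_I\log^+$ over the finitely many general-position subsets $I$). But there are two places where the sketch is not actually a proof of \emph{Vojta's} form of the theorem. First, the reduction to the nondegenerate case is not as clean as ``project to $V\cong\mathbb{P}^{n-1}$ and induct.'' When $f(\mathbb{C})$ lies in a proper linear subspace $V$, some $H_i$ may contain $V$, others may restrict to coinciding hyperplanes of $V$, and the subsets $I$ that are in general position in $\mathbb{P}^n$ need not restrict to general-position families in $V$; the inductive hypothesis on $V$ therefore does not directly yield the stated inequality, and handling this correctly requires a mechanism such as Nochka weights or Vojta's own bookkeeping of degenerate configurations. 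Second, the entire content of Vojta's refinement over the classical Cartan theorem is precisely that $Z$ can be taken independent of $f$ and of $\epsilon$. You flag this as ``arranged by careful inductive bookkeeping,'' but that is the theorem being proved, not an argument; Vojta's construction of the exceptional set and the verification that it works uniformly for all nonconstant $f$ and all $\epsilon>0$ is the nontrivial part of \cite{Voj4}, and your outline does not supply it. As a proof of the statement as written, the proposal therefore has a genuine gap at exactly the point that distinguishes this theorem from Cartan's original one.
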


We also note that Evertse and Ferretti's Theorem \ref{EF} was proven in the context of Nevanlinna theory by Ru \cite{Ru}.

Wirsing's theorem corresponds to a version of the Second Main Theorem for so-called algebroid functions (see \cite{Ru2}).  Nevanlinna theory for algebroid functions was developed in the 1920's and 1930's by, among others, Selberg \cite{Sel1,Sel2,Sel3}, Ullrich \cite{Ull}, and Valiron \cite{Val}, resulting in the following Second Main Theorem for algebroid functions.

\begin{theorem}
Let $f$ be a $\delta$-valued algebroid function in $|z|<\infty$.  Let $a_1,\ldots, a_q\in \mathbb{C}$ be distinct numbers.  Then for all $\epsilon>0$, 
\begin{equation*}
\sum_{i=1}^qm_f(a_i,r)\leq (2\delta+\epsilon)T_f(r)
\end{equation*}
for all $r>0$ outside a set of finite Lebesgue measure.
\end{theorem}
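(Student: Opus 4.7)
The plan is to reduce the theorem to Nevanlinna's Second Main Theorem for a single-valued meromorphic function, by lifting $f$ to the Riemann surface of its defining equation. Write that equation as $P(z,w) = A_\delta(z) w^\delta + \cdots + A_0(z) = 0$ with $A_0,\ldots,A_\delta$ entire and coprime, let $\mathcal{R}$ be the normalization of $\{P = 0\} \subset \mathbb{C}^2$, let $\pi\colon \mathcal{R}\to \mathbb{C}$ be the projection (a $\delta$-sheeted branched covering), and let $\tilde f\colon \mathcal{R}\to \mathbb{P}^1$ be the single-valued meromorphic lift of $f$. For $r>0$ set $\mathcal{R}_r := \pi^{-1}(\overline{D}_r)$; this yields an exhaustion of $\mathcal{R}$ by compact sets with boundary.

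Using the exhaustion $\mathcal{R}_r$, define the characteristic, proximity, and counting functions $T_{\tilde f}$, $m_{\tilde f}(a,\cdot)$, $N_{\tilde f}(a,\cdot)$ on $\mathcal{R}$ in the usual way. Since the algebroid Nevanlinna quantities are normalized as averages over the $\delta$ sheets of $\pi$, one verifies directly
\[
T_{\tilde f}(r) = \delta\, T_f(r) + O(1), \qquad m_{\tilde f}(a, r) = \delta\, m_f(a, r) + O(1).
\]
Now establish the Second Main Theorem on $\mathcal{R}$ in a form valid for meromorphic functions on an open Riemann surface equipped with such an exhaustion. Combining the Ahlfors--Shimizu formulation with the Riemann--Hurwitz identity $\chi(\mathcal{R}_r) = \delta - b(r)$, where $b(r)$ is the total branching of $\pi$ inside $\mathcal{R}_r$, should yield an inequality of the shape
\[
\sum_{i=1}^q m_{\tilde f}(a_i,r) \leq 2\, T_{\tilde f}(r) + 2(\delta-1)\,\delta\, T_f(r) + S(r),
\]
with $S(r) = o\bigl(T_{\tilde f}(r)\bigr)$ outside a set of finite Lebesgue measure. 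The coefficient $2(\delta-1)$ on the ramification term comes from a Jensen-formula estimate of the counting function of zeros of the discriminant $\mathrm{disc}_w P(z,w)$, which bounds $b(r)$.

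Dividing through by $\delta$ and absorbing lower-order terms into $\epsilon\, T_f(r)$ yields the desired bound $\sum_{i=1}^q m_f(a_i,r)\leq (2\delta + \epsilon)\, T_f(r)$. The main obstacle is producing the Second Main Theorem on $\mathcal{R}$ with the correct coefficient on the ramification correction: this is the classical Selberg--Ullrich--Valiron analysis, and the lemma on logarithmic derivatives must be adapted to account for the branch points of $\pi$ (equivalently, established directly for algebroid functions). It is worth noting that the constant $2\delta$ so obtained is not sharp; the Nevanlinna analogue of Schmidt's improvement over Wirsing's theorem would instead apply Cartan's Second Main Theorem (Theorem~\ref{VC}) to the holomorphic curve $F = [A_0 : \cdots : A_\delta]\colon \mathbb{C}\to \mathbb{P}^\delta$ against the Vandermonde hyperplanes $X_0 + a_i X_1 + \cdots + a_i^\delta X_\delta = 0$, with the aim of producing the sharper exponent $\delta+1+\epsilon$.
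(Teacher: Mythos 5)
The paper does not prove this theorem: it is a classical result of Selberg, Ullrich, and Valiron, quoted as background for the Diophantine--Nevanlinna dictionary, so there is no internal proof to compare against. Your sketch follows the classical route to it: lift $f$ to a single-valued meromorphic map $\tilde f$ on the normalization $\mathcal{R}$ of its defining curve, establish a Second Main Theorem on $\mathcal{R}$ carrying an Euler-characteristic correction for the exhaustion $\mathcal{R}_r$, and control that correction via Riemann--Hurwitz together with a Jensen-formula estimate for the zeros of $\mathrm{disc}_w P$. The structure is right, and the constants are internally consistent (the intermediate inequality on $\mathcal{R}$ divides through by $\delta$ to give exactly $2\delta$). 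What is deferred to the references is the real analytic content --- a Second Main Theorem for meromorphic maps on parabolic exhaustions and a logarithmic-derivative lemma uniform across the branch points of $\pi$ --- and in that respect your argument coincides in spirit with the $n=1$ case of Stoll's theorem already quoted in the paper, so it is a reasonable account of the classical proof rather than an independent alternative.

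The closing remark should be corrected, however: the constant $2\delta$ in the statement \emph{is} sharp. The $n=1$ adaptation of the paper's Example~\ref{exinf} (take a degree-$\delta$ morphism $\phi\colon\mathbb{P}^1\to\mathbb{P}^1$ with $\phi^{-1}(\{0,\infty\})$ consisting of $2\delta$ distinct points; $\phi^{-1}(\co_{k,S}^*)$ is then $S$-integral with respect to all $2\delta$ of them) exhibits this arithmetically, and the Nevanlinna analogue works the same way. Applying Cartan's theorem to $F=[A_0:\cdots:A_\delta]$ against the Vandermonde hyperplanes bounds $\int\log\bigl(\max_j|A_j|\,/\,|\textstyle\sum_j A_j a_i^j|\bigr)\,d\theta/2\pi$; since $\sum_j A_j a_i^j = A_\delta\prod_j(a_i-f_j)$, this controls $\int\log\bigl(\prod_j|a_i-f_j|\bigr)^{-1}d\theta/2\pi$, not $\int\sum_j\log^+|a_i-f_j|^{-1}\,d\theta/2\pi = \delta\,m_f(a_i,r)$, and the former is dominated by the latter. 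So Cartan's $\delta+1+\epsilon$ applies only to a weaker ``product'' proximity --- the exact Nevanlinna analogue of Schmidt's one-place formulation of Wirsing's theorem --- which is precisely the distinction the introduction draws between the case where $\delta+1+\epsilon$ is optimal and the general case, including the theorem at hand, where $2\delta+\epsilon$ is optimal.
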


More generally, in higher dimensions Stoll proved the following theorem \cite[Th.~4.3]{Ru2}.

\begin{theorem}[Stoll]
Let $\pi:M\to \mathbb{C}$ be an (analytic) $\delta$-sheeted covering.  Let $f:M\to\mathbb{P}^n$ be a holomorphic map.  Let $H_1,\ldots, H_q$ be hyperplanes in $\mathbb{P}^n$ in general position.  Then for all $\epsilon>0$,
\begin{equation*}
\sum_{i=1}^qm_f(H_i,r)\leq (2\delta n+\epsilon)T_f(r)
\end{equation*}
for all $r>0$ outside a set of finite Lebesgue measure.
\end{theorem}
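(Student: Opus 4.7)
The plan is to deduce Stoll's theorem from the Nevanlinna-theoretic analogue of Evertse-Ferretti's Theorem \ref{EF} (due to Ru \cite{Ru}) applied to an auxiliary holomorphic map constructed from the branches of $f$. The basic idea is to push $f$ down along the covering $\pi\colon M\to\mathbb{C}$ by taking symmetric functions in the $\delta$ branches, producing a single holomorphic map $\tilde{F}\colon\mathbb{C}\to\mathbb{P}^N$ to which one can apply a Cartan-type theorem on $\mathbb{C}$.

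More precisely, for each $z\in\mathbb{C}$ the fiber $\pi^{-1}(z)$ is an unordered $\delta$-element multiset of points in $M$, giving a holomorphic map $\mathbb{C}\to\Sym^\delta(M)$; composing with $f$ yields a holomorphic map $F\colon\mathbb{C}\to\Sym^\delta(\mathbb{P}^n)$ (the extension across branch points of $\pi$ being standard via the holomorphy of elementary symmetric functions in local branches). Using the plethysm embedding $\Sym^\delta(\mathbb{P}^n)\hookrightarrow\mathbb{P}^N$, this becomes a holomorphic map $\tilde{F}\colon\mathbb{C}\to\mathbb{P}^N$ whose image lies in a projective variety $X$ of dimension $\delta n$. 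For each hyperplane $H_i\subset\mathbb{P}^n$ defined by a linear form $l_i$, the $\delta$-fold symmetric product gives a degree-$\delta$ hypersurface $\tilde{H}_i\subset\mathbb{P}^N$ (defined by the coefficient-tuples of the universal polynomial having $l_i$ as a factor), and a direct computation with Weil functions yields the key relations
\begin{equation*}
\lambda_{\tilde{H}_i}(\tilde{F}(z))=\sum_{p\in\pi^{-1}(z)}\lambda_{H_i}(f(p))+O(1),\qquad T_{\tilde{F}}(r)=\delta\,T_f(r)+O(1),
\end{equation*}
so the inequality to be proved is equivalent to an inequality for $\tilde{F}$ in $\mathbb{P}^N$ with respect to the $\tilde{H}_i$.

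Applying Ru's theorem to $\tilde{F}$ on the variety $X$, with the $\tilde{H}_i$'s (and verifying that the general-position hypothesis transfers from the $H_i$'s), produces an inequality of the form $\sum_i m_{\tilde{F}}(\tilde{H}_i,r)/\delta\leq (c+\epsilon)T_{\tilde{F}}(r)$, which upon substitution through the relations above translates to $\sum_i m_f(H_i,r)\leq (c+\epsilon)T_f(r)$.

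The main obstacle is pinning down the constant $c=2\delta n$: this is smaller than the naive $\delta(\delta n+1)$ that Evertse-Ferretti on the $(\delta n)$-dimensional $X$ would yield directly. Achieving the sharp constant requires exploiting the fact that $\tilde{F}(\mathbb{C})$ is only a curve inside $X$ (rather than a generic point), so that one may replace the ambient dimension $\delta n$ by something closer to the image dimension, in line with the factor of $2$ appearing in Nevanlinna's one-variable SMT. A more intrinsic alternative is to develop Cartan's Second Main Theorem directly on the parabolic Riemann surface $M$ (with the exhaustion inherited via $\pi$), and to estimate the resulting ramification/Ricci term by Riemann-Hurwitz; this is closer in spirit to the original Selberg-Valiron treatment of the $n=1$ algebroid case, and should produce the constant $2\delta n$ transparently, with the $2$ coming from the Wronskian/logarithmic-derivative step on $M$ and the $\delta n$ reflecting the interaction of the cover with the dimension of the target.
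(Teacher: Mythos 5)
This theorem is not proved in the paper; it is quoted as a known result (attributed to Stoll via \cite[Th.~4.3]{Ru2}) and used only to frame the Nevanlinna side of the picture and to motivate Theorems \ref{8th} and \ref{15th}. There is therefore no ``paper's proof'' to compare against, and the task was to reconstruct an argument from scratch.

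You have honestly flagged the central difficulty yourself, and it is a genuine one. After pushing $f$ down to $\tilde{F}\colon\mathbb{C}\to\Sym^\delta(\mathbb{P}^n)\hookrightarrow\mathbb{P}^N$, the divisors $\tilde{H}_i=\phi_*\pi_1^*H_i$ on $\Sym^\delta(\mathbb{P}^n)$ are \emph{not} in general position: they are only in $\delta n$-subgeneral position (exactly as in the arithmetic step of Theorem \ref{compth}, where $E_i=\phi_*\pi_1^*D_i$ are shown to be in $\delta m$-subgeneral position on $X^{(\delta)}$). So Ru's Nevanlinna analogue of Theorem \ref{EF} does not apply directly; one must instead use the Nevanlinna analogue of Theorem \ref{degen}, which for $m=\dim=\delta n$ gives the constant $(\delta n)^2\frac{\delta n-1}{2\delta n-3}$, quadratic in $\delta n$ rather than linear. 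Your first proposed fix --- exploiting that $\tilde{F}(\mathbb{C})$ is only a curve inside $\Sym^\delta(\mathbb{P}^n)$ --- does not close the gap: if $\tilde F(\mathbb{C})$ is Zariski dense (the generic situation) there is no proper subvariety to restrict to, and the quadratic bound is what one is stuck with; if $\tilde{F}(\mathbb{C})$ does lie in a proper algebraic curve $C$, the restricted divisors are still only in $\delta n$-subgeneral position on $C$, and the best constant the curve case provides is exactly $2\delta n$, not better (this is $A(\delta n,1)\le 2\delta n$, which is sharp). So the ``restrict to the image curve'' idea handles only the degenerate algebroid case and cannot improve the bound in the generic one. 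Incidentally, the paper's machinery recovers $2\delta n$ for $\delta n\le 3$ (where $(\delta n)^2\frac{\delta n-1}{2\delta n-3}=2\delta n$) and even beats it for $\delta=n=2$ via Theorem \ref{15th}, but for $\delta n\ge 4$ it yields strictly weaker constants.

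Your second alternative --- running Cartan's derivation directly on $M$ with the parabolic exhaustion pulled back from $\mathbb{C}$ by $\pi$, and controlling the Ricci/ramification term by Riemann--Hurwitz --- is the correct route, and is essentially the Selberg--Ullrich--Valiron--Stoll approach. The factor $2$ comes from the logarithmic-derivative/Wronskian estimate carried out on $M$, and the factor $\delta n$ from the sheet count interacting with the truncation level; the Riemann--Hurwitz bound keeps the branch-point contribution from affecting the leading coefficient. But this is a different method from anything in the paper, and to turn the sketch into a proof you would need to supply the logarithmic-derivative lemma for parabolic exhaustions of $M$, the generalized Wronskian lemma on $M$, and the resulting SMT with ramification term before estimating that term. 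As written, the proposal correctly diagnoses where the difficulty lies and names the right direction, but the proof itself is not there.
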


The techniques of this paper allow one to prove, for instance, a slightly improved version of Stoll's result in the case $\delta=n=2$.

\begin{theorem}
Let $\pi:M\to \mathbb{C}$ be a $2$-sheeted covering.  Let $f:M\to\mathbb{P}^2$ be a holomorphic map.  Let $L_1,\ldots, L_q$ be lines in $\mathbb{P}^2$ in general position.  Then for all $\epsilon>0$,
\begin{enumerate}
\item 
\begin{equation*}
\sum_{i=1}^qm_f(L_i,r)\leq (8+\epsilon)T_f(r)
\end{equation*}
for all $r>0$ outside a set of finite Lebesgue measure.
\item  There exists a finite union of lines $Z\subset \mathbb{P}^2$ such that either $f(M)\subset Z$ or
\begin{equation*}
\sum_{i=1}^qm_f(L_i,r)\leq \left(\frac{15}{2}+\epsilon\right)T_f(r)
\end{equation*}
for all $r>0$ outside a set of finite Lebesgue measure.
\end{enumerate}
\end{theorem}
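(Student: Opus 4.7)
The plan is to translate the Diophantine proofs of Theorems \ref{8th} and \ref{15th} into Nevanlinna theory via Vojta's dictionary, substituting Cartan's Second Main Theorem (Theorem \ref{VC}) and Ru's Nevanlinna analogue of the Evertse--Ferretti theorem for the Subspace Theorem. The $2$-sheeted cover $\pi : M \to \mathbb{C}$ with its sheet-swapping involution $\sigma : M \to M$ plays the role of the quadratic extension $k(P)/k$ with its nontrivial Galois automorphism, and summing Weil function values over the two sheets of $\pi$ above a point of $\mathbb{C}$ corresponds to the sum $\sum_{w \mid v}$ over places of $k(P)$ above $v$.

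The central construction is passage to the symmetric square. I would form the $\sigma$-equivariant map $F = (f, f \circ \sigma) : M \to \mathbb{P}^2 \times \mathbb{P}^2$ and descend through the quotient $q : \mathbb{P}^2 \times \mathbb{P}^2 \to \Sym^2 \mathbb{P}^2$ to a holomorphic map $\tilde F : \mathbb{C} \to \Sym^2 \mathbb{P}^2$, realizing $\Sym^2 \mathbb{P}^2$ as a subvariety of $\mathbb{P}^5$ via the six symmetric bihomogeneous forms of bidegree $(1,1)$. For a line $L \subset \mathbb{P}^2$ let $D_L \subset \Sym^2 \mathbb{P}^2$ be the image of $L \times \mathbb{P}^2 \cup \mathbb{P}^2 \times L$. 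A pullback computation (using $\sigma$-invariance and $q \circ F = \tilde F \circ \pi$) yields
\begin{equation*}
m_{\tilde F}(D_{L_i}, r) = m_f(L_i, r) + O(1), \qquad T_{\tilde F}(r) = T_f(r) + O(1),
\end{equation*}
in Stoll's convention (integrating over $M$), paralleling the Galois-theoretic identities $\lambda_{D_{L_i}, v}(\{P, P^\sigma\}) = \sum_{w \mid v} \lambda_{L_i, w}(P)$ underlying Theorems \ref{8th} and \ref{15th}.

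Part (1) coincides with Stoll's theorem for $\delta = n = 2$, so no new argument is needed beyond the above identities. For part (2), I would implement the refinement distinguishing Theorem \ref{15th} from Theorem \ref{8th}: apply Ru's version of Evertse--Ferretti to $\tilde F$ on $\Sym^2 \mathbb{P}^2 \subset \mathbb{P}^5$ using the $D_{L_i}$'s, noting that any five of them have empty intersection on $\Sym^2 \mathbb{P}^2$ (a pigeonhole consequence of general position of the $L_i$, since otherwise three of the lines would contain a common point). Combining this with Vojta's uniform form of Cartan's Second Main Theorem (Theorem \ref{VC}) applied to an auxiliary linear system, as in Theorem \ref{15th}, yields the improved constant. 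The exceptional hyperplanes in $\mathbb{P}^5$ produced by Theorem \ref{VC} pull back through $\Sym^2 \mathbb{P}^2 \subset \mathbb{P}^5$ and descend to a finite union of lines $Z \subset \mathbb{P}^2$ depending only on $L_1, \ldots, L_q$; when $f(M) \not\subset Z$ the bound $(\frac{15}{2} + \epsilon) T_f(r)$ follows.

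The main obstacle will be the descent of the exceptional locus: one must verify that the exceptional hyperplanes in $\mathbb{P}^5$ produced by Theorem \ref{VC}, after pullback to $\Sym^2 \mathbb{P}^2$ and descent to $\mathbb{P}^2$, give rise to only a finite union of \emph{lines} depending only on the original data, not on $f$ or $\epsilon$. This is the Nevanlinna counterpart of the technical core of Theorem \ref{15th}, and the uniformity in $\epsilon$ and $f$ provided by Theorem \ref{VC} is essential here, since weaker forms of Cartan's theorem would yield an exceptional set depending on those parameters.
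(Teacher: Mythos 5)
Your proposal follows the same approach the paper itself indicates and declines to spell out: reduce via the $\sigma$-equivariant symmetric-square construction to a problem for the descended map $\tilde F : \mathbb{C} \to \Sym^2\mathbb{P}^2$, and then rerun the arguments of Section \ref{salg} with Cartan's Second Main Theorem (Theorem \ref{VC}) in place of the Subspace Theorem. Two remarks. For part (1), invoking Stoll's theorem directly with $\delta = n = 2$ is a genuine shortcut not available on the arithmetic side (there is no unconditional arithmetic analogue of Stoll's $2\delta n$ bound; the paper obtains the constant $8$ by combining the $15/2$ bound with Theorem \ref{thl}(a) on the exceptional lines), so this part of your route is both valid and somewhat simpler than a verbatim translation of the proof of Theorem \ref{8th}. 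For part (2), however, you misidentify the source of the constant $15/2$: Ru's Nevanlinna analogue of Evertse--Ferretti applied to $\Sym^2\mathbb{P}^2 \subset \mathbb{P}^5$ ($n = 4$) would give $5+\epsilon$ but with an exceptional set of uncontrolled dimension, while the $m$-subgeneral position bound of Theorem \ref{degen} with $m=n=4$ gives the useless constant $10$. The constant $15/2$ actually emerges from Lemma \ref{5lem}, a multiple application of the Subspace (respectively Cartan) Theorem after restricting to a generic hyperplane $\mathbb{P}^4 \subset \mathbb{P}^5$ and exploiting that any five of the relevant hyperplanes of $\mathbb{P}^5$ intersect in a point (Lemma \ref{lemcon}); and the reduction of the exceptional locus to a finite union of lines in $\mathbb{P}^2$ requires the subsequent intersection-theoretic case analysis on surfaces in $\Sym^2\mathbb{P}^2$ (Lemma \ref{lc} and Cases I--III of the proof of Theorem \ref{thmain}(a)). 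Your closing paragraph correctly identifies the exceptional-locus descent as the crux, but the uniformity of Theorem \ref{VC} alone does not resolve it --- one must reproduce the arithmetic argument's geometry on $\Sym^2\mathbb{P}^2$ in the Nevanlinna setting, essentially verbatim.
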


This theorem is, of course, the analogue in Nevanlinna theory of Theorems \ref{8th} and \ref{15th}.

The proofs of our arithmetic results depend on the Schmidt Subspace Theorem and formal properties of Weil functions and height functions.  Since the formal properties of Weil functions and height functions carry over without any change to the corresponding functions in Nevanlinna theory, one may use Cartan's Second Main Theorem (Theorem \ref{VC}), in the same manner as Schmidt's theorem is used in the proofs given here, to obtain analogues in Nevanlinna theory of all of our arithmetic results.  This is, by now, well known and straightforward so we will omit the details.

Finally, we give a brief outline of the rest of the paper.  In the next section we give the necessary definitions and background material.  In Section \ref{snum} we prove an extension of Theorem \ref{EF} to numerically equivalent ample divisors.  This is based on the idea that divisors that are numerically equivalent are very close to being linearly equivalent.  In Section \ref{slog} we study the relations between various versions of Schmidt's theorem for numerically equivalent ample divisors where:
\begin{enumerate}
\item  The divisors are in $m$-subgeneral position.
\label{sa}
\item  The dimension of the exceptional set $Z$ is controlled.
\label{sb}
\item  The points are defined over fields of bounded degree.
\label{sc}
\end{enumerate}
We show that results of type \eqref{sa} imply results of type \eqref{sb}, which in turn imply results of type \eqref{sc}.  In Section \ref{sbound} we prove a version of Schmidt's theorem for numerically equivalent ample divisors in $m$-subgeneral position, which by the preceding discussion yields a version of Schmidt's theorem for algebraic points (Theorem \ref{galg}).

In Section \ref{salg} we prove a sharp version of Schmidt's theorem for quadratic points in the projective plane.  We give a short overview of the proof.  We first reduce the theorem to a Diophantine approximation problem for rational points on $\Sym^2\mathbb{P}^2$.  This idea was already present in Ru and Wang's paper \cite{RW}, which in turn was based on a technique of Stoll \cite{Stoll} in Nevanlinna theory.  We handle the Diophantine approximation problem by using variations on the technique pioneered by Corvaja and Zannier in \cite{CZ3} and subsequently developed in, among others, the papers \cite{Aut, Aut2, CLZ, CZ,CZ2,Lev}.  The underlying idea, as applied to our specific case, is to embed $\Sym^2\mathbb{P}^2$, or its subvarieties, into projective space and apply the Schmidt Subspace Theorem in an appropriate fashion.  Since we have to handle arbitrary subvarieties of $\Sym^2\mathbb{P}^2$ in this process, we are led to deal with divisors that are not in general position, which leads to several technical difficulties.  Finally, in the last section we discuss some conjectural versions of Schmidt's theorem for algebraic points.

\section{Background Material}
\label{snd}
Let $k$ be a number field and let $\co_k$ denote the ring of integers of $k$.  We have a canonical set $M_k$ of places (or absolute values) of $k$ consisting of one place for each prime ideal $\mathfrak{p}$ of $\mathcal{O}_k$, one place for each real embedding $\sigma:k \to \mathbb{R}$, and one place for each pair of conjugate embeddings $\sigma,\overline{\sigma}:k \to \mathbb{C}$.  For $v\in M_k$, let $k_v$ denote the completion of $k$ with respect to $v$.  We normalize our absolute values so that $|p|_v=p^{-[k_v:\mathbb{Q}_p]/[k:\mathbb{Q}]}$ if $v$ corresponds to $\mathfrak{p}$ and $\mathfrak{p}|p$, and $|x|_v=|\sigma(x)|^{[k_v:\mathbb{R}]/[k:\mathbb{Q}]}$ if $v$ corresponds to an embedding $\sigma$ (in which case we say that $v$ is archimedean).  If $v$ is a place of $k$ and $w$ is a place of a field extension $L$ of $k$, then we say that $w$ lies above $v$, or $w|v$, if $w$ and $v$ define the same topology on $k$.  If $S$ is a set of places of a number field $k$ and $L$ is a finite extension of $k$, then we denote by $S_L$ the set of places of $L$ lying above places in $S$.

We now briefly recall some facts about Weil functions and height functions that will be used throughout.  We note that although the theory of heights is typically developed in the context of Cartier divisors, the theory can be developed for Weil divisors or even arbitrary closed subschemes (see \cite{Sil}).  On projective space, for a point $P=(x_0,\ldots,x_n)\in \mathbb{P}^n(k)$, the height is given by the formula
\begin{equation*}
h(P)=\sum_{v\in M_k} \log \max\{|x_0|_v,\ldots,|x_n|_v\}.
\end{equation*}
More generally, let $X$ be a projective variety defined over a number field $k$.  To every Weil divisor $D\in \Div(X)$ we can associate a height function $h_D:X(\kbar)\to \mathbb{R}$, well-defined up to a bounded function.  Let $D,E\in \Div(X)$.  Height functions satisfy the following properties:
\begin{enumerate}
\item  $h_{D+E}=h_D+h_E+O(1)$.
\label{ea}
\item  If $\phi:Y\to X$ is a morphism of projective varieties with $\phi(Y)\not\subset \Supp D$, then $h_{\phi^*D}=h_D\circ \phi+O(1)$.
\item  If $D$ is effective, then $h_D(P)\geq O(1)$ for all $P\in X(\kbar)\setminus \Supp D$.
\label{ec}
\end{enumerate}

Similarly, for every divisor $D\in \Div(X)$ and every place $v\in M_k$ we can associate a local Weil function (or local height function) $\lambda_{D,v}:X(k)\setminus \Supp D\to \mathbb{R}$.  When $D$ is effective, the Weil function $\lambda_{D,v}$ gives a measure of the $v$-adic distance of a point to $D$, being large when the point is close to $D$.  As noted in the introduction, if $D$ is a hypersurface in $\mathbb{P}^n$ defined by a homogeneous polynomial $f\in k[x_0,\ldots,x_n]$ of degree $d$, then a Weil function for $D$ is given by
\begin{equation*}
\lambda_{D,v}(P)=\log \max_i \frac{|x_i|_v^d}{|f(P)|_v},
\end{equation*}
where $P=(x_0,\cdots,x_n)\in \mathbb{P}^n(k)\setminus \Supp D$.

Local Weil functions satisfy analogues of properties \eqref{ea}--\eqref{ec} above:  for all $P\in X(k)$ where the relevant Weil functions are defined,
\begin{enumerate}
\item  $\lambda_{D+E,v}(P)=\lambda_{D,v}(P)+\lambda_{E,v}(P)+O(1)$.
\item  If $\phi:Y\to X$ is a morphism of projective varieties with $\phi(Y)\not\subset \Supp D$, then $\lambda_{\phi^*D,v}(P)=\lambda_{D,v}(\phi(P))+O(1)$.
\item  If $D$ is effective, then $\lambda_{D,v}(P)\geq O(1)$.
%\item  If $D\leq E$ then $\lambda_{D,v}(P)\leq \lambda_{E,v}(P)+O(1)$.
\end{enumerate}

Let $S$ be a finite set of places in $M_k$.  For $P\in X(\kbar)$ we define the proximity function $m_{D,S}(P)$ by
\begin{equation*}
m_{D,S}(P)=\sum_{v\in S}\sum_{\substack{w\in M_{k(P)}\\w|v}}\lambda_{D,w}(P).
\end{equation*}
This is well-defined up to $O(1)$.  Height functions can be decomposed into a sum of local Weil functions, and in particular we have the inequality
\begin{equation*}
m_{D,S}(P)\leq h_D(P)+O(1)
\end{equation*}
for all $P\in X(\kbar)\setminus \Supp D$.

If $D_1,\ldots, D_q$ are effective (Cartier or Weil) divisors on a projective variety $X$, then we say that $D_1,\ldots, D_q$ are in $m$-subgeneral position if $q>m$ and for any subset $I\subset \{1,\ldots, q\}$, $|I|=m+1$, we have $\cap_{i\in I}\Supp D_i=\emptyset$.  We say that the divisors are in general position if for any subset $I\subset \{1,\ldots, q\}$, $|I|\leq \dim X+1$, we have $\codim \cap_{i\in I}\Supp D_i\geq |I|$, where we set $\dim \emptyset =-1$.

Suppose now that $X$ is regular in codimension one.  If $D,E\in \Div(X)$, then we define
\begin{align*}
\gcd(D,E)&=\sum \min\{\ord_FD,\ord_FE\}F,\\
\lcm(D,E)&=\sum \max\{\ord_FD,\ord_FE\}F,
\end{align*}
where the sums run over all prime divisors $F$ in $\Div(X)$ and $\ord_FD$ denotes the coefficient of $F$ in $D$.  Recall that we can naturally associate a Weil divisor to every Cartier divisor on $X$.  Thus, in this context we can also define the $\gcd$ and $\lcm$ for Cartier divisors (though the result may only be a Weil divisor).  We also define the vector space $L(D)=\{f \in \kbar(X)\mid\dv(f)\geq -D\}$, where $\dv(f)$ is the divisor associated to $f$, and $l(D)=\dim_{\kbar}L(D)$.  We use $\sim$ to denote linear equivalence of divisors and $\equiv$ to denote numerical equivalence of divisors.

\section{Schmidt's theorem for numerically equivalent ample divisors}
\label{snum}
In this section we prove a slight generalization of the theorem of Evertse and Ferretti (Theorem \ref{EF}) in the setting of numerically equivalent ample divisors, or more generally, ample divisors that have a common multiple up to numerical equivalence.  It will be convenient first to give a slight reformulation of Evertse and Ferretti's theorem.

\begin{theorem}[Evertse and Ferretti, reformulated]
\label{Ev}
Let $X$ be a projective variety of dimension $n$ defined over a number field $k$.  Let $S$ be a finite set of places of $k$.  For each $v\in S$, let $D_{0,v},\ldots, D_{n,v}$ be effective Cartier divisors on $X$, defined over $k$, in general position.  Suppose that there exists an ample Cartier divisor $A$ on $X$ and positive integers $d_{i,v}$ such that $D_{i,v}\sim d_{i,v}A$ for all $i$ and for all $v\in S$.  Let $\epsilon>0$.  Then there exists a proper Zariski-closed subset $Z\subset X$ such that for all points $P\in X(k)\setminus Z$,
\begin{equation*}
\sum_{v\in S}\sum_{i=0}^n \frac{\lambda_{D_{i,v},v}(P)}{d_{i,v}}< (n+1+\epsilon)h_A(P).
\end{equation*}
\end{theorem}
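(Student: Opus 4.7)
The plan is to reduce Theorem \ref{Ev} to the original Evertse--Ferretti theorem (Theorem \ref{EF}) via a projective embedding by a sufficiently high multiple of $A$. Choose a positive integer $m$ such that $mA$ is very ample and such that the restriction maps $H^0(\mathbb{P}^N, \co_{\mathbb{P}^N}(d_{i,v})) \to H^0(X, \co_X(d_{i,v}\, mA))$ are surjective for each of the finitely many pairs $(i,v)$; this is possible for $m$ large enough by Serre vanishing. Let $\phi: X \hookrightarrow \mathbb{P}^N$ be the closed embedding associated to $|mA|$, defined over $k$ with $\phi^*\co_{\mathbb{P}^N}(1)\simeq \co_X(mA)$. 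For each $(i,v)$, the effective divisor $mD_{i,v}$ lies in the linear equivalence class of $d_{i,v}\,mA$ and so corresponds to a section of $\co_X(d_{i,v}\,mA)$; by surjectivity this section lifts to a section of $\co_{\mathbb{P}^N}(d_{i,v})$ over $k$, cutting out a hypersurface $\tilde H_{i,v}\subset\mathbb{P}^N$ of degree $d_{i,v}$ with $\phi^*\tilde H_{i,v}=mD_{i,v}$ as divisors on $X$.

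Since $D_{0,v},\ldots,D_{n,v}$ are in general position on the $n$-dimensional $X$, their supports have empty common intersection, and hence so do the corresponding $\tilde H_{i,v}$ on $\phi(X)$. Applying Theorem \ref{EF} to $\phi(X)$ with the hypersurfaces $\tilde H_{i,v}$ (and with $\epsilon/2$ in place of $\epsilon$) yields a proper Zariski-closed subset $Z_0\subset \phi(X)$ such that
\[
\sum_{v\in S}\sum_{i=0}^n \frac{\lambda_{\tilde H_{i,v},v}(Q)}{d_{i,v}} < \left(n+1+\tfrac{\epsilon}{2}\right)h(Q)
\]
for all $Q\in \phi(X)(k)\setminus Z_0$. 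Using the standard functorial properties $\lambda_{\tilde H_{i,v},v}(\phi(P))=m\,\lambda_{D_{i,v},v}(P)+O(1)$ and $h(\phi(P))=m\,h_A(P)+O(1)$, and setting $Z_1=\phi^{-1}(Z_0)$, one obtains
\[
\sum_{v\in S}\sum_{i=0}^n \frac{\lambda_{D_{i,v},v}(P)}{d_{i,v}} < \left(n+1+\tfrac{\epsilon}{2}\right)h_A(P) + C
\]
for all $P\in X(k)\setminus Z_1$, where $C$ is a constant independent of $P$.

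To absorb $C$ into $\epsilon$, note that since $A$ is ample, Northcott's theorem guarantees that only finitely many $P\in X(k)$ satisfy $h_A(P)\leq 2C/\epsilon$; enlarging $Z_1$ by this finite set gives the desired proper Zariski-closed $Z\subset X$, on whose complement the inequality with constant $n+1+\epsilon$ holds. There is no deep obstacle here: the proof is a standard reduction of an abstract projective statement to one on projective space by means of a very ample linear system. The only real subtlety is selecting $m$ large enough to ensure surjectivity of the restriction maps, which is what lets each $mD_{i,v}$ be realized \emph{exactly} as the pullback of a degree-$d_{i,v}$ hypersurface rather than merely up to linear equivalence, so that the general position hypothesis of Theorem \ref{EF} is verified.
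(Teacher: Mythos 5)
Your proposal is correct and follows the same high-level strategy as the paper: embed $X$ in projective space by a suitable multiple of $A$, realize the divisors as pullbacks of hypersurfaces, and invoke Theorem~\ref{EF}. The one real difference is in how you realize the divisors. The paper chooses $N$ to be a common multiple of all the $d_{i,v}$, so that each $\frac{N}{d_{i,v}}D_{i,v}$ lies in the complete linear system $|NA|$ and is therefore automatically $\phi^*$ of a \emph{hyperplane} — this requires nothing beyond the fact that the embedding is given by the complete system $|NA|$. You instead keep the degree $d_{i,v}$, viewing $mD_{i,v}$ as a section of $\co_X(d_{i,v}\,mA)$ and lifting it to a degree-$d_{i,v}$ hypersurface in $\mathbb{P}^N$. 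That lift requires the restriction map $H^0(\mathbb{P}^N,\co(d_{i,v}))\to H^0(X,\co_X(d_{i,v}\,mA))$ to be surjective, i.e.\ projective normality of the embedding in degree $d_{i,v}$. That does hold for $m\gg 0$, but it is a genuinely stronger input than Serre vanishing alone (the ideal sheaf and ambient $\mathbb{P}^N$ change with $m$, so one needs the projective-normality-for-$m\gg0$ theorem, not just a vanishing statement for a fixed sheaf). Your argument is valid, but the paper's divisibility trick sidesteps this entirely and is the cleaner route. The remaining steps — functoriality of Weil functions, $h(\phi(P))=m\,h_A(P)+O(1)$, translation of general position, and absorbing the $O(1)$ via Northcott by enlarging $Z$ — match the paper.
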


\begin{proof}
Let $N$ be a positive integer such that $NA$ is very ample and $d_{i,v}$ divides $N$ for all $i$ and all $v\in S$.  Let $M=l(NA)-1$ and let $\phi:X\to \mathbb{P}^M$ be the embedding of $X$ in $\mathbb{P}^M$ associated to $NA$.  Then since $\frac{N}{d_{i,v}}D_{i,v}\sim NA$, $\frac{N}{d_{i,v}}D_{i,v}=\phi^*H_{i,v}$ for some hyperplane $H_{i,v}$ defined over $k$.  By functoriality and additivity of Weil functions, for $P\in X(k)\setminus \Supp D_{i,v}$ we have (up to $O(1)$)
\begin{equation*}
\lambda_{H_{i,v},v}(\phi(P))=\lambda_{\frac{N}{d_{i,v}}D_{i,v},v}(P)=N\frac{\lambda_{D_{i,v},v}(P)}{d_{i,v}}.
\end{equation*}
Note also that for $P\in X(k)$, $h(\phi(P))=Nh_A(P)+O(1)$.  Substituting these identities into Theorem \ref{EF}, the result then follows immediately.
\end{proof}

We now show that Theorem \ref{Ev} remains true if we replace linear equivalence by numerical equivalence.

\begin{theorem}
\label{mev}
Let $X$ be a projective variety of dimension $n$ defined over a number field $k$.  Let $S$ be a finite set of places of $k$.  For each $v\in S$, let $D_{0,v},\ldots, D_{n,v}$ be effective Cartier divisors on $X$, defined over $k$, in general position.  Suppose that there exists an ample Cartier divisor $A$ on $X$ and positive integers $d_{i,v}$ such that $D_{i,v}\equiv d_{i,v}A$ for all $i$ and for all $v\in S$.  Let $\epsilon>0$.  Then there exists a proper Zariski-closed subset $Z\subset X$ such that for all points $P\in X(k)\setminus Z$,
\begin{equation*}
\sum_{v\in S}\sum_{i=0}^n \frac{\lambda_{D_{i,v},v}(P)}{d_{i,v}}< (n+1+\epsilon)h_A(P).
\end{equation*}
\end{theorem}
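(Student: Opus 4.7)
The plan is to reduce Theorem \ref{mev} to Theorem \ref{Ev} by replacing each $D_{i,v}$ with a nearby effective divisor that is literally linearly equivalent (not just numerically equivalent) to a positive multiple of $A$, at the cost of inflating each $d_{i,v}$ by an arbitrarily small $\epsilon' > 0$. The numerical defect $C_{i,v} := D_{i,v} - d_{i,v}A$ is by hypothesis numerically trivial, so by Kleiman's criterion $-C_{i,v} + \epsilon' A \equiv \epsilon' A$ is ample. Hence for a sufficiently large and divisible positive integer $r$, chosen uniformly in $(i,v)$, each class $r(-C_{i,v}+\epsilon' A)$ is very ample and its complete linear system over $k$ contains effective divisors $E^-_{i,v}$. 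Setting
\begin{equation*}
D^*_{i,v} := rD_{i,v} + E^-_{i,v},
\end{equation*}
one obtains an effective divisor linearly equivalent to the single ample class $r(d_{i,v}+\epsilon')A$.

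The key step, and the main obstacle, is to choose, for each $v \in S$, the divisors $E^-_{0,v}, \dots, E^-_{n,v}$ generically within their linear systems so that $D^*_{0,v},\dots,D^*_{n,v}$ are in general position on $X$; this is nontrivial because $E^-_{i,v}$ is ample, hence meets every positive-dimensional subvariety, so the supports $\Supp D^*_{i,v}$ cannot simply be shrunk. Instead, one decomposes $\bigcap_{i \in I}\Supp D^*_{i,v}$ as a union of mixed intersections $\bigcap_{i \in I_D}\Supp D_{i,v} \cap \bigcap_{i \in I_E}\Supp E^-_{i,v}$ over partitions $I = I_D \sqcup I_E$. The first factor has codimension at least $|I_D|$ by the general position of the $D_{i,v}$, and a standard Bertini-style dimension count shows that intersecting with generic members of a very ample linear system drops the dimension by exactly one each time, forcing the total codimension to be at least $|I|$ for $|I| \leq n+1$. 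Density of $k$-rational points in each projective linear system ensures that such generic choices can be made over $k$.

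Once general position is secured, Theorem \ref{Ev} applied to $\{D^*_{i,v}\}$ with ample $A$ and multiplicities $r(d_{i,v}+\epsilon')$ yields a proper Zariski-closed $Z \subset X$ outside which
\begin{equation*}
\sum_{v \in S}\sum_{i=0}^n \frac{\lambda_{D^*_{i,v},v}(P)}{r(d_{i,v}+\epsilon')} < (n+1+\epsilon'')h_A(P).
\end{equation*}
Additivity of Weil functions gives $\lambda_{D^*_{i,v},v} = r\lambda_{D_{i,v},v} + \lambda_{E^-_{i,v},v} + O(1)$, and $\lambda_{E^-_{i,v},v} \geq O(1)$ (after enlarging $Z$ to contain $\bigcup_{i,v}\Supp E^-_{i,v}$), so $\lambda_{D^*_{i,v},v} \geq r\lambda_{D_{i,v},v}+O(1)$. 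Passing from $1/(d_{i,v}+\epsilon')$ to $1/d_{i,v}$ introduces an error controlled by $\epsilon'\cdot\lambda_{D_{i,v},v}/d_{i,v}$, which via the trivial bound $\lambda_{D_{i,v},v} \leq h_{D_{i,v}} + O(1) = d_{i,v}h_A + O(1)$ is at worst $O(\epsilon' h_A)$; taking $\epsilon'$ and $\epsilon''$ small in terms of $\epsilon$ then delivers the claimed inequality.
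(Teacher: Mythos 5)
Your proposal is correct and follows essentially the same approach as the paper: replace each $D_{i,v}$ by a positive multiple plus a generic effective divisor from a linear system that is very ample because it is numerically close to a multiple of $A$, so that the modified divisors are literally linearly equivalent to a multiple of $A$ and remain in general position, then apply Theorem~\ref{Ev} and absorb the small error. The paper invokes Matsusaka's theorem where you invoke Kleiman's criterion together with very ampleness of large multiples, but since only finitely many $(i,v)$ occur the two are interchangeable, and your $D^{*}_{i,v}=rD_{i,v}+E^{-}_{i,v}$ is exactly the paper's $NA\sim(N-N_0)D_{i,v}+E_{i,v}$ under $r=N-N_0$, $r\epsilon'=N_0$.
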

We will need the following result in algebraic geometry, due to Matsusaka \cite{Mat} (see also \cite{Kl}).
\begin{theorem}[Matsusaka]
\label{numample}
Let $A$ be an ample Cartier divisor on a projective variety $X$.  Then there exists a positive integer $N_0$ such that for all $N\geq N_0$, and any Cartier divisor $D$ with $D\equiv NA$, $D$ is very ample.
\end{theorem}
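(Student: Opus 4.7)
The plan is to prove the statement using Fujita's vanishing theorem, which provides cohomology vanishing that is uniform over all nef twists. Recall that for a projective scheme $X$, an ample Cartier divisor $A$, and any coherent sheaf $\cf$ on $X$, Fujita vanishing yields an integer $m_0 = m_0(\cf, A)$ such that $H^i(X, \cf(mA + E)) = 0$ for all $i > 0$, all $m \geq m_0$, and \emph{all} nef Cartier divisors $E$. The crucial feature is that $m_0$ is independent of the choice of $E$. If $D \equiv NA$, then $E := D - NA$ is numerically trivial, so both $E$ and $-E$ are nef, and Fujita vanishing applies to divisors of the form $NA + E$ uniformly across the full parameter space of admissible $E$.

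First I would apply Fujita vanishing with $\cf = \co_X$ to obtain an integer $m_1$ such that $H^i(X, \co(D)) = 0$ for all $i > 0$, $N \geq m_1$, and $D \equiv NA$. To upgrade higher cohomology vanishing to very ampleness of $D$, I would use the standard criterion that $D$ is very ample if and only if, for every zero-dimensional subscheme $Z \subset X$ of length at most $2$ (either two distinct closed points, or a length-$2$ thickening of a single closed point), the restriction map $H^0(X, \co(D)) \to H^0(Z, \co_Z(D))$ is surjective. From the ideal-sheaf sequence $0 \to \mathcal{I}_Z(D) \to \co(D) \to \co_Z(D) \to 0$, this surjectivity follows from the single cohomological condition $H^1(X, \mathcal{I}_Z(D)) = 0$.

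The heart of the argument is to establish this last vanishing uniformly in both $Z$ and $E$ at once. My strategy would be to invoke a relative form of Fujita vanishing on the product $X \times \mathcal{H}$, where $\mathcal{H}$ denotes the Hilbert scheme parameterizing zero-dimensional subschemes of $X$ of length at most $2$. Since $\mathcal{H}$ is of finite type over $k$, there is a universal closed subscheme $\mathcal{Z} \hookrightarrow X \times \mathcal{H}$ with universal ideal sheaf $\mathcal{I}_{\mathcal{Z}}$, flat over $\mathcal{H}$. Applying Fujita vanishing to $\mathcal{I}_{\mathcal{Z}}$ relative to the projection $X \times \mathcal{H} \to \mathcal{H}$, combined with cohomology and base change, produces a single integer $m_2$ such that $H^1(X, \mathcal{I}_Z(NA + E)) = 0$ for all $N \geq m_2$, all $Z$ parameterized by $\mathcal{H}$, and all numerically trivial $E$. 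Taking $N_0 = \max(m_1, m_2)$ then forces every $D \equiv NA$ with $N \geq N_0$ to be very ample.

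The main obstacle is the last step: producing a single uniform bound that absorbs simultaneously the (possibly positive-dimensional) family of nef divisors $E$ and the bounded family of ideal sheaves $\mathcal{I}_Z$. Resolving this requires either a careful flatness and semicontinuity argument on $X \times \mathcal{H}$ to transport the absolute Fujita vanishing theorem across the family, or direct appeal to the relative Fujita vanishing theorem. Once that uniformity is in hand, the rest of the proof is routine homological algebra combined with the standard cohomological characterization of very ampleness.
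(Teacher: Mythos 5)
The paper does not prove this statement at all: it is quoted as Matsusaka's theorem, with references to Matsusaka and to Kleiman's exposition, and is then used as a black box in the proof of Theorem \ref{mev}. So there is no proof in the paper to compare against, and your sketch must be judged on its own. On its own it is essentially the standard modern proof: Fujita's vanishing theorem gives, for a fixed coherent sheaf, a twisting bound that is uniform over all nef (in particular all numerically trivial) twists, and very ampleness of $D\equiv NA$ reduces to the vanishing $H^1(X,\mathcal{I}_Z(D))=0$ for all zero-dimensional subschemes $Z$ of length at most $2$, uniformity in $Z$ being obtained by working with the universal ideal sheaf over the (projective, finite type) Hilbert scheme of such $Z$. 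This is a genuinely different route from the classical Matsusaka--Kleiman argument the paper cites, which predates Fujita vanishing and instead combines boundedness of the numerically trivial part of the Picard scheme with Serre vanishing and Noetherian induction over the resulting finite-type parameter space; your approach buys you independence from that boundedness statement, at the cost of invoking Fujita vanishing in a relative form.

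The steps you leave open are available but need to be stated carefully, and this is where the remaining work lies. First, the length-$\leq 2$ separation criterion for very ampleness is a statement over an algebraically closed field, so you should note that very ampleness may be checked after base change to $\kbar$ (the complete linear system and the associated morphism behave well under flat base change). Second, the universal ideal sheaf $\mathcal{I}_{\mathcal{Z}}$ is flat over $\mathcal{H}$ (kernel of a surjection of sheaves flat over $\mathcal{H}$), so its restriction to a fiber really is $\mathcal{I}_{Z_h}$. Third, vanishing of $R^1p_{2*}$ alone does not give fiberwise vanishing of $H^1$; you need $R^ip_{2*}\bigl(\mathcal{I}_{\mathcal{Z}}\otimes p_1^*\mathcal{O}(NA+E)\bigr)=0$ for \emph{all} $i>0$ together with the descending-induction corollary of cohomology and base change (Mumford), which is exactly what the relative Fujita theorem (Keeler) supplies; alternatively one can deduce the needed relative vanishing from the absolute Fujita theorem applied on $X\times\mathcal{H}$ with the ample divisor $p_1^*A+p_2^*B$, letting the $p_2^*B$-twist grow and using the projection formula and the Leray spectral sequence to kill the higher direct images. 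With these points supplied, your argument is complete and gives a self-contained proof of the quoted theorem.
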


\begin{proof}[Proof of Theorem \rm\ref{mev}]
Let $d$ be the least common multiple of the integers $d_{i,v}$.  Replacing $D_{i,v}$ by $\frac{dD_{i,v}}{d_{i,v}}$ for all $i$, $A$ by $dA$, and using the additivity of Weil functions and heights (up to bounded functions), we see that it suffices to prove the case where $d_{i,v}=1$ for all $i$ and all $v\in S$, i.e., $D_{i,v}\equiv A$ for all $i$ and all $v\in S$.  Let $\epsilon>0$.  Let $N_0$ be the integer of Theorem \ref{numample} for $A$.  By standard properties of Weil functions and heights, there exists an integer $N>N_0$ such that
\begin{equation}
\label{Neq}
\frac{N_0}{N}\sum_{v\in S}\sum_{i=0}^n\lambda_{D_{i,v},v}(P)< \epsilon h_A(P)+O(1) 
\end{equation}
for all $P\in X(k)\setminus \cup_{i,v}\Supp D_{i,v}$.

By the choice of $N_0$, we have that $NA-(N-N_0)D_{i,v}$ is very ample for all $i$ and all $v\in S$.  Since for $v\in S$ the divisors $D_{0,v},\ldots,D_{n,v}$ are in general position and $NA-(N-N_0)D_{i,v}$ is very ample for all $i$, there exist effective divisors $E_{i,v}$ such that $NA\sim (N-N_0)D_{i,v}+E_{i,v}$, and for all $v\in S$, the divisors $(N-N_0)D_{0,v}+E_{0,v},\ldots, (N-N_0)D_{n,v}+E_{n,v}$ are in general position.  We may now apply Theorem~\ref{Ev} to the linearly equivalent divisors $(N-N_0)D_{i,v}+E_{i,v}$, $i=0,\ldots,n$, $v\in S$, and $NA$.  We obtain
\begin{align*}
\sum_{v\in S}\sum_{i=0}^n \lambda_{(N-N_0)D_{i,v}+E_{i,v},v}(P)< (n+1+\epsilon)h_{NA}(P)
\end{align*}
for all $P\in X(k)\setminus Z$ for some proper Zariski-closed subset $Z$ of $X$ containing the supports of all $E_{i,v}$, $v\in S$, $i=0,\ldots, n$.  Using additivity and that $\lambda_{E_{i,v},v}$ is bounded from below outside of the support of $E_{i,v}$, we obtain
\begin{equation*}
\sum_{v\in S}\sum_{i=0}^n \left(1-\frac{N_0}{N}\right)\lambda_{D_{i,v},v}(P)< (n+1+\epsilon)h_A(P)+O(1)
\end{equation*}
for all $P \in X(k)\setminus Z$.  Using (\ref{Neq}) we find
\begin{equation}
\label{eq1}
\sum_{v\in S}\sum_{i=0}^n \lambda_{D_{i,v},v}(P)< (n+1+2\epsilon)h_A(P)+O(1)
\end{equation}
for all $P\in X(k)\setminus Z$.
Since there are only finitely many $k$-rational points of bounded height (with respect to $A$), after adding finitely many points to $Z$, we see that (\ref{eq1}) holds with $2\epsilon$ replaced by $3\epsilon$, and without the $O(1)$ term.
\end{proof}

\section{Logical relations between some extensions of Schmidt's theorem}
\label{slog}

In this section we make precise the following implications:
\begin{enumerate}
\item  Schmidt's theorem for numerically equivalent ample divisors in $m$-subgeneral position\\
\begin{equation*}
\Downarrow
\end{equation*}
\label{ia}
\item \eqref{ia} + the dimension of the exceptional set $Z$ is controlled\\
\begin{equation*}
\Downarrow
\end{equation*}
\label{ib}
\item \eqref{ib} for algebraic points of bounded degree
\label{ic}
\end{enumerate}
In the next section we will prove a theorem of type \eqref{ia}, and hence obtain results as in \eqref{ib} and \eqref{ic} above.  The implication \eqref{ia} $\Rightarrow$ \eqref{ib} will follow from repeated application of \eqref{ia} to exceptional sets.  The implication \eqref{ib} $\Rightarrow$ \eqref{ic} will follow from an argument reducing Diophantine approximation problems for algebraic points of bounded degree on $X$ to Diophantine approximation problems for rational points on symmetric powers of $X$.

Let $m\geq n$ be positive integers.  Let $X$ be a projective variety of dimension $n$ defined over a number field $k$.  Let $S$ be a finite set of places of $k$.  For each $v\in S$, let $D_{0,v},\ldots, D_{m,v}$ be effective Cartier divisors on $X$, defined over $k$, in $m$-subgeneral position, i.e., $\cap_{i=0}^m \Supp D_{i,v}=\emptyset$.  Let $D=\sum_{v\in S}\sum_{i=0}^m D_{i,v}$.  Suppose that there exists an ample Cartier divisor $A$ on $X$, defined over $k$, and positive integers $d_{i,v}$ such that $D_{i,v}\equiv d_{i,v}A$ for all $i$ and for all $v\in S$.  For $c\in\mathbb{R}$ and $\delta\in \mathbb{N}$, define the geometric exceptional set $Z=Z\left(c,\delta, m, n,k, S, X, A, \{D_{i,v}\}\right)$ to be the smallest Zariski-closed subset of $X$ such that the set

\begin{equation*}
\left\{P\in X(\kbar)\setminus \Supp D\mid [k(P):k]\leq \delta,  \sum_{v\in S}\sum_{\substack{w\in M_{k(P)}\\w\mid v}} \sum_{i=0}^m\frac{\lambda_{D_{i,v},w}(P)}{d_{i,v}}>c h_A(P)\right\}\setminus Z
\end{equation*}
is finite for all choices of the Weil and height functions.

We now define
\begin{multline*}
C(\delta, l,m,n)=\sup \{c\mid \dim Z\left(c,\delta, m, n,k, S, X, A, \{D_{i,v}\}\right)\geq l \\
\text{ for some $k$, $S$, $X$, $A$, $\{D_{i,v}\}$ as above}\}.
\end{multline*}
Additionally, it will be useful to define and study the particular cases for rational points
\begin{align*}
A(m,n)&=C(1,n,m,n),\\
B(l,m,n)&=C(1,l,m,n).
\end{align*}

It is also useful to consider a variation of the above definitions.  Under the same hypotheses, let $D_1,\ldots, D_q$ be effective Cartier divisors on $X$, defined over $k$, in $m$-subgeneral position and let $D'=\sum_{i=1}^q D_i$.  Suppose that there exists an ample Cartier divisor $A$ on $X$, defined over $k$, and positive integers $d_1,\ldots,d_q$ such that $D_i\equiv d_iA$ for all $i$.  For $c'\in \mathbb{R}$, define the set $Z'=Z'\left(c',\delta, m, n,k, S, X, A, D_1,\ldots, D_q\right)$  to be the smallest Zariski-closed subset of $X$ such that the set

\begin{equation}
\label{exc2}
\left\{P\in X(\kbar)\setminus \Supp D'\mid [k(P):k]\leq \delta,  \sum_{i=1}^q\frac{m_{D_i,S}(P)}{d_{i}}>c' h_A(P)\right\}\setminus Z
\end{equation}
is finite for all choices of the proximity and height functions.

Define
\begin{multline*}
C'(\delta, l,m,n)=\sup \{c'\mid \dim Z'\left(c',\delta, m, n,k, S, X, A, D_1,\ldots, D_q\}\right)\geq l \\
\text{ for some $k$, $S$, $X$, $A$, $D_1,\ldots, D_q$ as above}\}.
\end{multline*}
As before, we define
\begin{align*}
A'(m,n)&=C'(1,n,m,n),\\
B'(l,m,n)&=C'(1,l,m,n).
\end{align*}

We will show in the next section that $C(\delta, l,m,n)$ and $C'(\delta, l,m,n)$ are always finite.

\begin{theorem}
\label{compth}
For all positive integers $\delta,l,m,n$ we have the following inequalities:
\begin{equation*}
B'(l,m,n)\leq B(l,m,n),
\end{equation*}
\begin{align*}
%C'(\delta,l,m,n)&\leq C(\delta,l,m,n),\\
B(l,m,n)&\leq \max_{l\leq j\leq n}A(m,j), & B'(l,m,n)&\leq \max_{l\leq j\leq n}A'(m,j),\\
C(\delta, l,m,n)&\leq B(l,\delta m, \delta n)\leq \max_{l\leq j\leq \delta n}A(\delta m,j), & C'(\delta, l,m,n)&\leq B'(l,\delta m, \delta n)\leq \max_{l\leq j\leq \delta n}A'(\delta m,j).
%B'(l,m,n)&\leq \max_{l\leq j\leq n}A'(m,j),\\
%C'(\delta, l,m,n)&\leq B'(l,\delta m, \delta n)\leq \max_{l\leq j\leq \delta j}A'(\delta m,j).
\end{align*}
\end{theorem}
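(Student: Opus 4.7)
The plan is to prove the three groups of inequalities in order: (a) the comparison $B'(l,m,n)\leq B(l,m,n)$; (b) the dimensional induction $B(l,m,n)\leq\max_{l\leq j\leq n}A(m,j)$ and its primed analogue $B'(l,m,n)\leq\max_{l\leq j\leq n}A'(m,j)$; and (c) the reduction to symmetric powers yielding $C(\delta,l,m,n)\leq B(l,\delta m,\delta n)$ and $C'(\delta,l,m,n)\leq B'(l,\delta m,\delta n)$. The chained inequalities in the final line follow by composing (c) with (b) applied to the parameters $(\delta m,\delta n)$.

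For (a), I would exploit that at each $v\in S$ and each fixed $P$, at most $m$ of the local Weil functions $\lambda_{D_i,v}(P)$ can exceed any prescribed threshold, since any $m+1$ of the $D_i$'s have empty common intersection. Completing the set of large indices to a subset $I_v\subset\{1,\dots,q\}$ of size $m+1$, one obtains finitely many possible tuples $\{I_v\}_{v\in S}$. For each such tuple the divisors $(D_i)_{i\in I_v}$, $v\in S$, form a valid $B$-setup on $X$ whose sum dominates the original $B'$-sum up to $O(1)$. A finite union of $B$-exceptional sets, each of dimension less than $l$ when $c>B(l,m,n)$, is again of dimension less than $l$, giving $B'\leq B$.

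For (b), suppose $c>\max_{l\leq j\leq n}A(m,j)$ but some $B$-setup has exceptional set $Z$ with $\dim Z\geq l$. Pick a component $Z_i$ of $Z$ of dimension $j\in[l,n]$. By minimality of $Z$, the bad points are Zariski dense in $Z_i$ and cannot all lie in $\Supp D$, so $Z_i\not\subset\Supp D_{i,v}$ for any $i,v$; the restrictions $D_{i,v}|_{Z_i}$ are then proper effective divisors on $Z_i$ inheriting both $m$-subgeneral position and the numerical equivalence with the still-ample $A|_{Z_i}$. Local Weil functions and heights restrict up to $O(1)$, so the exceptional set on $Z_i$, with $Z_i$ as ambient variety, equals $Z_i$ itself, contradicting $c>A(m,j)$. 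The argument for $B'\leq\max_j A'(m,j)$ is identical.

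The heart of the theorem is (c), based on the correspondence between degree-$\delta$ points of $X$ and $k$-rational points of $Y=\Sym^\delta X$. Given $P\in X(\kbar)$ with $[k(P):k]=\delta$ and Galois conjugates $P_1,\ldots,P_\delta$, the $0$-cycle $\tilde P=[P_1,\ldots,P_\delta]$ is a rational point of $Y(k)$; points of smaller degree are handled by the same construction on $\Sym^d X$ for $d<\delta$. Letting $\pi\colon X^\delta\to Y$ be the quotient, define divisors $\widetilde{D}_{i,v}$ on $Y$ and an ample $A'$ on $Y$ via $\pi^*\widetilde{D}_{i,v}=\sum_{j=1}^\delta\pi_j^*D_{i,v}$ and $\pi^*A'=\sum_j\pi_j^*A$, so that $\widetilde{D}_{i,v}\equiv d_{i,v}A'$. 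A place-by-place computation using the normalizations of Section~\ref{snd} yields
\begin{equation*}
\sum_{w\mid v}\lambda_{D_{i,v},w}(P)=\tfrac{1}{\delta}\lambda_{\widetilde{D}_{i,v},v}(\tilde P)+O(1),\qquad h_{A'}(\tilde P)=\delta\,h_A(P)+O(1).
\end{equation*}
For the $C'$ case, pullback to $X^\delta$ combined with the pigeonhole principle shows that any $\delta m+1$ of the divisors $\widetilde{D}_i$ have empty intersection on $Y$ (otherwise some coordinate of a preimage would lie in $\geq m+1$ of the $D_i$'s), so the $\widetilde{D}_i$'s form a valid $B'$-setup on $Y$ of dimension $\delta n$, and dividing the $B'(l,\delta m,\delta n)$-bound by $\delta$ yields the $C'$-bound. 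For $C\leq B$, the $m+1$ divisors $\widetilde{D}_{0,v},\ldots,\widetilde{D}_{m,v}$ are too few for a $B$-setup, so I would augment them, for each $v\in S$, with $(\delta-1)m$ generic effective ``dummy'' divisors $\widetilde{E}_{j,v}\equiv d_{j,v}A'$; since $m\geq n$ implies $\delta m+1>\dim Y$, a generic choice puts the full family of $\delta m+1$ divisors per $v$ in $\delta m$-subgeneral position, and the dummy Weil functions (bounded below since the divisors are effective) only inflate the $B$-sum. Translating back via the identities above and projecting the $Y$-exceptional set to $X$ through the $\pi_j$'s (which cannot increase dimension) completes the argument. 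I expect the main technical obstacle to be the dummy construction for $C\leq B$, which requires simultaneous control of numerical equivalence and subgeneral position and is best handled by a Bertini-type genericity argument on $Y$.
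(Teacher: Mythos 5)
Your proposal follows the same three-step architecture as the paper's proof: (a) at most $m$ Weil functions are large at each place, reducing $B'$ to a finite union of $B$-setups; (b) restricting to a maximal-dimensional irreducible component of the exceptional set reduces $B$ to $A$ in lower ambient dimension; (c) the $\Sym^\delta X$ construction transports bounded-degree points on $X$ to $k$-rational points on a $\delta n$-dimensional variety with $\delta m$-subgeneral divisors. The supporting identities $h_{A'}(\tilde P)=\delta h_A(P)+O(1)$ and the proximity-function scaling match the paper's computation.

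The one place your write-up goes beyond what the paper explicitly records is the unprimed inequality $C(\delta,l,m,n)\leq B(l,\delta m,\delta n)$. The paper only spells out the primed case $C'\leq B'$ and asserts the others are ``nearly identical.'' You correctly observe that the pushed-forward family $\widetilde{D}_{0,v},\dots,\widetilde{D}_{m,v}$ has only $m+1$ members, which is strictly fewer than the $\delta m+1$ required by the $B(l,\delta m,\delta n)$-setup whenever $\delta\geq 2$, and that the intersection of all $m+1$ supports on $\Sym^\delta X$ is in general nonempty. Your fix---padding with generic effective divisors numerically equivalent to a multiple of $A'$ to reach $\delta m+1$ divisors in $\delta m$-subgeneral position, whose Weil contributions are bounded below---is exactly what is needed to make the left-hand column literally go through; your pigeonhole verification that any $\delta m+1$ of the pushed-forward divisors have empty common support is also correct. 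So your proposal is both correct and slightly more careful than the paper's own treatment of that step, but the underlying method is the same.
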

\begin{proof}

We first prove that $B'(l,m,n)\leq B(l,m,n)$.  For $P\in X(k)\setminus \cup_{i=1}^q\Supp D_i$, we have
\begin{equation}
\label{seq}
\sum_{i=1}^q \frac{m_{D_i,S}(P)}{d_{i}}=\sum_{v\in S}\sum_{i=1}^q \frac{\lambda_{D_i,v}(P)}{d_{i}}.
\end{equation}
Since the divisors $D_i$ are in $m$-subgeneral position, any point $P\in X(k)$ can be $v$-adically close to at most $m$ of the divisors $D_i$.  This implies that there exists a constant $c$ such that for any $P\in X(k)\setminus \cup_{i=1}^q\Supp D_i$ and any $v\in S$, there are indices $i_1,\ldots, i_m\in \{1,\ldots, q\}$ such that
\begin{equation}
\label{eqred}
\sum_{i=1}^q \frac{\lambda_{D_i,v}(P)}{d_{i}}<\sum_{j=1}^m \frac{\lambda_{D_{i_j},v}(P)}{d_{i_j}}+c.
\end{equation}
Using this fact and \eqref{seq}, the result follows.

For the other inequalities, we prove only the notationally simpler inequalities in the right-hand column (the proofs of the remaining inequalities are nearly identical).  Let $c',l,m, n,k, S, X, A, D_1,\ldots, D_q, D$ be as above and let $Z'$ be the exceptional set in \eqref{exc2} with $\delta=1$.  Suppose that $\dim Z'=j\geq l$.  Let $W$ be an irreducible component of $Z'$ (over $\kbar$) with $\dim W=j$.  After extending $k$, we can assume that $W$ is defined over $k$.  The divisors $D_1|_W,\ldots, D_q|_W$ are in $m$-subgeneral position on $W$ and $A|_W$ is ample.  By functoriality of Weil functions and the definitions of $Z'$ and $W$, for any $\epsilon>0$ the set of $P\in W(k)\setminus \Supp D$ such that
\begin{equation*}
\sum_{i=1}^q\frac{m_{D_i|_W,S}(P)}{d_{i}}>(c'-\epsilon) h_{A|_W}(P)
\end{equation*}
is Zariski-dense in $W$.  This means that $A'(m,j)\geq c'$ which implies that $B'(l,m,n)\leq \max_{l\leq j\leq n}A'(m,j)$.

Now let $\delta$ be a positive integer and let $\epsilon>0$.  Consider the set $Z'$ defined to be the Zariski-closure of the set
\begin{equation}
\left\{P\in X(\kbar)\setminus \Supp D\mid [k(P):k]=\delta,  \sum_{i=1}^q\frac{m_{D_i,S}(P)}{d_{i}}>(B'(l,\delta m, \delta n)+\epsilon)h_A(P)\right\}
\end{equation}
(in what follows, we may assume without of loss of generality that $B'(l,\delta m, \delta n)$ is finite).  To prove that $C'(\delta, l,m,n)\leq B'(l,\delta m, \delta n)$ it suffices to show that $\dim Z'<l$ (for all possible choices of the parameters).

%By our assumptions on $f(m,n)$, it suffices to prove the theorem for points $P\in X(\kbar)$ satisfying $[k(P):k]=\delta$.  

Using standard properties of Weil functions and heights, replacing $D_i$ by $\left(\prod_{j\neq i}d_i\right)D_i$ and $A$ by $\left(\prod_{j=1}^qd_i\right)A$, it suffices to prove the case where $d_i=1$ for all $i$.  Let $X^\delta$ denote the $\delta$th power of $X$ and let $X^{(\delta)}$ denote the $\delta$th symmetric power of $X$, i.e., $X^{(\delta)}=X^\delta/S_{\delta}$, the quotient of $X^\delta$ by the natural action of the symmetric group $S_\delta$ acting by permuting the coordinates of $X^\delta$.  Let $\phi:X^\delta\to X^{(\delta)}$ denote the natural map.  For a point $P\in X(\kbar)$ satisfying $[k(P):k]=\delta$, let $P=P_1,\ldots,P_\delta$ denote the $\delta$ distinct conjugates of $P$ over $k$ and let $\psi(P)=(P_1,\ldots,P_{\delta})\in X^\delta$.  Let $\pi_i$, $i=1,\ldots,\delta$, denote the $i$th projection map from $X^\delta$ to $X$.  Let $E_i=\phi_*\pi_1^*(D_i)$, $i=1,\ldots, q$, $E=\sum_{i=1}^qE_i$, and $B=\phi_*\pi_1^*(A)$.  We note that $\phi^*\phi_*\pi_1^*D_i=\sum_{j=1}^\delta \pi_j^*D_i$.  Since the $D_i$ are in $m$-subgeneral position, it is easily seen that the $E_i$ are in $m\delta$-subgeneral position.  Moreover, $D_i\equiv D_j\equiv A$ for all $i$ and $j$ implies that $E_i\equiv E_j\equiv B$ for all $i$ and $j$.  It follows from the fact that $A$ is ample that $B$ is ample, and consequently $E_i$ is ample (and effective) for all $i$.  We note also that if $P\in X(\kbar)$, $[k(P):k]=\delta$, then the point $\phi(\psi(P))\in X^{(\delta)}$ is $k$-rational.  Then from the definition of $B'(l,\delta m, \delta n)$, we have the inequality
\begin{equation}
\label{beq}
\sum_{i=1}^q m_{E_i,S}(P)<(B'(l,\delta m, \delta n)+\epsilon)h_{B}(P), \quad \forall P\in X^{(\delta)}(k)\setminus (Z\cup \Supp E),
\end{equation}
for some Zariski-closed subset $Z$ of $X^{(\delta)}$ satisfying $\dim Z\leq l-1$.  For $P\in X(\kbar)\setminus \Supp D$, $[k(P):k]=\delta$, we have, up to $O(1)$,
\begin{align*}
\sum_{i=1}^q m_{E_i,S}(\phi(\psi(P)))&=\sum_{i=1}^q m_{\phi_*\pi_1^*D_i,S}(\phi(\psi(P)))=\sum_{i=1}^q m_{\phi^*\phi_*\pi_1^*D_i,S}(\psi(P))\\
&=\sum_{i=1}^q m_{\sum_{j=1}^\delta \pi_j^*D_i,S}(\psi(P))=\sum_{i=1}^q \sum_{j=1}^\delta m_{\pi_j^*D_i,S}(\psi(P))\\
&=\sum_{i=1}^q \sum_{j=1}^\delta m_{D_i,S}(\pi_j(\psi(P)))=\sum_{i=1}^q \sum_{j=1}^\delta m_{D_i,S}(P_j)\\
&=\delta\sum_{i=1}^q m_{D_i,S}(P).
\end{align*}

A similar calculation gives $h_B(\phi(\psi(P)))=\delta h_A(P)+O(1)$. It follows that for an appropriate choice of the functions,
\begin{equation*}
\sum_{i=1}^q m_{E_i,S}(\phi(\psi(P)))>(B'(l,\delta m, \delta n)+\epsilon)h_{B}(\phi(\psi(P)))
\end{equation*}
for all $P\in R$, where $R\subset Z'(\kbar)$ is a Zariski-dense set of points in $Z'$ satisfying $[k(P):k]=\delta$ for all $P\in R$.  From \eqref{beq}, we have $\dim \phi(\psi(R))\leq l-1$.  Since $\phi$ is a finite map and $\pi_1(\psi(R))=R$, $\dim \phi(\psi(R))\geq \dim R=\dim Z'$.  We conclude that $\dim Z'\leq l-1$, as desired.

%Note that if $R\subset T_\delta=\{P\in X(\kbar)\mid [k(P):k]=\delta\}$ then $\dim \overline{\psi(R)}\geq \dim \bar{R}$, as $\pi_1\circ \psi$ is the identity on $R$.  Since $\phi$ is a finite map, we also have $\dim \overline{\phi(\psi(R))}\geq \dim \bar{R}$.  It follows that $\dim (\psi\circ \phi)^{-1}(Z\cap \psi(\phi(T_\delta)))\leq l-1$.

\end{proof}

\section{A bound for the function $A(m,n)$ and a Schmidt theorem for algebraic points}
\label{sbound}

Using the notation from last section, we will prove the bound
\begin{equation*}
A(m,n)\leq
\begin{cases}
n+1 & \text{if }m=n,\\
\frac{m(m-1)(n+1)}{m+n-2} & \text{if } m\geq 2.
\end{cases}
\end{equation*}

If $m=1$ then necessarily $m=n=1$ (we always have $m\geq n$ as the supports of any $n=\dim X$ ample effective divisors on a projective variety $X$ have nontrivial intersection).  The bound $A(n,n)\leq n+1$ is simply a restatement of Theorem \ref{Ev}.  For $m\geq 2$ we prove the following theorem.

\begin{theorem}
\label{degen}
Let $X$ be a projective variety of dimension $n$ defined over a number field $k$.  Let $S$ be a finite set of places of $k$.  For each $v\in S$, let $D_{0,v},\ldots, D_{m,v}$ be effective ample Cartier divisors on $X$, defined over $k$, in $m$-subgeneral position, $m\geq 2$.  Suppose that there exists a Cartier divisor $A$ on $X$, defined over $k$, and positive integers $d_{i,v}$ such that $D_{i,v}\equiv d_{i,v}A$ for all $i$ and for all $v\in S$.  Let $\epsilon>0$.  Then the  inequality
\begin{equation*}
\sum_{v\in S}\sum_{i=0}^m \frac{\lambda_{D_{i,v},v}(P)}{d_{i,v}}< \left(\frac{m(m-1)(n+1)}{m+n-2}+\epsilon\right)h_A(P)+O(1)
\end{equation*}
holds for all $P\in X(k)\setminus \cup_{i,v} \Supp D_{i,v}$.
\end{theorem}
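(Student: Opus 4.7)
The plan is to combine Theorem \ref{mev} with a combinatorial analysis exploiting the $m$-subgeneral position hypothesis, together with an induction on $n = \dim X$ that absorbs the Zariski-exceptional subset inherent to Theorem \ref{mev}. As in the proof of Theorem \ref{mev}, I first reduce to the case $d_{i,v} = 1$ for all $i, v$ (so $D_{i,v} \equiv A$) by the standard rescaling and additivity argument. The induction on $n$ then uses the key structural observation that for any irreducible subvariety $Y \subset X$ with $Y \not\subset \Supp D_{i,v}$ for all $i, v$, the restrictions $D_{i,v}|_Y$ are effective, ample, numerically equivalent to the ample $A|_Y$, and remain in $m$-subgeneral position on $Y$ (since $\bigcap_i \Supp D_{i,v}|_Y \subset \bigcap_i \Supp D_{i,v} = \emptyset$), so the theorem applies on $Y$ by the inductive hypothesis.

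The central step is to establish the inequality outside a proper Zariski-closed $Z \subset X$ by applying Theorem \ref{mev} to suitable subsets $I \subset \{0,\ldots,m\}$ of size $n+1$ for which $\{D_{i,v}\}_{i\in I}$ lies in general position (possibly after modifying the divisors via decompositions $D_{i,v} = E_{i,v} + D'_{i,v}$ that separate common components $\gcd(D_{i,v}, D_{j,v})$ and rebalance against the ample class of $A$), obtaining
\begin{equation*}
\sum_{v \in S} \sum_{i \in I} \lambda_{D_{i,v},v}(P) < (n+1+\epsilon) h_A(P) + O(1)
\end{equation*}
outside some $Z_I$. A non-negative weighted combination of these inequalities, with weights chosen so that each of the $m+1$ indices receives total weight one in the aggregated sum, yields the bound with constant $\frac{m(m-1)(n+1)}{m+n-2}$ outside $Z = \bigcup_I Z_I$. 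The specific value of the constant emerges from solving the linear-programming dual to this combination under the combinatorial constraints imposed by $m$-subgeneral position. For $P \in Z$, the inductive hypothesis applied to each irreducible component of $Z$ (of dimension $<n$) closes the argument.

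The main obstacle is twofold. First, identifying the correct family of general-position $(n+1)$-subsets and the weights that produce precisely $\frac{m(m-1)(n+1)}{m+n-2}$ is a delicate combinatorial problem, since $m$-subgeneral position does not by itself guarantee the existence of many $(n+1)$-subsets in general position; the decompositions involving $\gcd(D_{i,v}, D_{j,v})$ are essential to move into general position. Second, for the induction on $n$ to close, one needs the monotonicity $\frac{n'+1}{m+n'-2} \leq \frac{n+1}{m+n-2}$ for $n' < n$, which reduces algebraically to $(n-n')(m-3) \geq 0$: this holds automatically for $m \geq 3$ but fails for $m = 2$. Thus the case $m = 2$ (which is precisely where the stated bound beats the trivial $(m+1) h_A$ for $n \geq 3$) must be treated separately, most plausibly via the rational map $X \dashrightarrow \P^2$ induced by three sections of $\co(NA)$ cutting out suitable multiples of $D_{0,v}, D_{1,v}, D_{2,v}$, which is well-defined everywhere outside a codimension-$2$ subset because $\bigcap_i \Supp D_{i,v} = \emptyset$.
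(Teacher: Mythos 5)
Your proposal does not contain the argument that actually produces the constant $\frac{m(m-1)(n+1)}{m+n-2}$; the central step is only gestured at, and the device you gesture at is not the one the paper uses. You propose to apply Theorem \ref{mev} to $(n+1)$-element subsets of $\{D_{i,v}\}$ that are in general position, after unspecified decompositions $D_{i,v}=E_{i,v}+D_{i,v}'$, and then to take a weighted combination of the resulting inequalities; but, as you yourself concede, $m$-subgeneral position gives no such subsets in general, and you do not say what the decompositions, the subsets, or the weights are. That is precisely the heart of the matter, and the paper proceeds quite differently: it works with \emph{pairs} of divisors rather than $(n+1)$-tuples. For each pair $(i,j)$ and each $v$, one takes the two filtrations of $L(NA)$ by order of vanishing along $D_{i,v}$ and along $D_{j,v}$, chooses a basis compatible with both (Lemmas \ref{RR}, \ref{fil}, \ref{lhyp}), and obtains $l(NA)$ hyperplanes of $\mathbb{P}^{l(NA)-1}$ in general position whose pullbacks under $\phi_{NA}$ dominate $(\frac{A^n}{(n+1)!}N^{n+1}+O(N^n))\lcm(D_{i,v},D_{j,v})$; the classical Subspace Theorem applied to these hyperplanes gives the key inequality \eqref{fundineq}, namely $\sum_{v\in S}\lambda_{\lcm(D_{i,v},D_{j,v}),v}(P)<(n+1+\epsilon)h_A(P)$ off a proper closed subset. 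The constant then comes from summing over the $m(m-1)$ ordered pairs and from the combinatorial inequality \eqref{meq}, $\sum_{i\neq j}\lcm(D_{i,v},D_{j,v})\geq(m+n-2)\sum_{l}D_{l,v}$, which holds because an irreducible component of $D_{i,v}$ can lie in at most $m-n$ of the other divisors (this is where ampleness and $m$-subgeneral position enter). Nothing equivalent to this pairwise-$\lcm$ mechanism appears in your sketch, so as it stands the proposal does not prove the bound.

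On the other hand, your worry about the dimension induction is well taken, and in fact it points at a defect in the statement itself rather than something your alternative route could repair. The exceptional set is indeed removed by restricting to components of $Z$ and inducting on dimension, exactly as you propose, and this needs $\frac{m(m-1)(j+1)}{m+j-2}$ to be nondecreasing in $j$, which holds precisely for $m\geq3$ (the paper asserts it is increasing in $n$, which fails at $m=2$). Since $m\geq n$ always holds here, $m=2$ only concerns $n\leq2$, so the scenario ``$m=2$, $n\geq3$'' in your last paragraph does not arise; but at $m=n=2$ the difficulty is genuine. For instance, in $\mathbb{P}^2$ with $S=\{\infty,p\}$, take at $\infty$ the lines $x=0$, $x=z$, $y=0$ and at $p$ the lines $y=0$, $y=z$, $x=0$ (each triple has empty intersection); the points $P_N=(1:p^N:0)$ satisfy $\sum_{v\in S}\sum_{i}\lambda_{D_{i,v},v}(P_N)=4h(P_N)+O(1)$, so the stated constant $3$ cannot hold without an exceptional set, and only the curve constant $2m=4$ survives on the line $z=0$. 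A correct formulation must therefore either assume $m\geq3$, retain an exceptional set, or use $\max_{1\leq j\leq n}\frac{m(m-1)(j+1)}{m+j-2}$; your proposed detour through a rational map to $\mathbb{P}^2$ for $m=2$ is undeveloped and could not recover the constant $3$ in any case.
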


Combined with Theorem \ref{compth}, we find that if $\delta m\geq 2$,
\begin{equation*}
C(\delta, 1,m,n)\leq \max_{1\leq j\leq \delta n}A(\delta m,j)\leq \max_{1\leq j\leq \delta n}\frac{\delta m(\delta m-1)(j+1)}{\delta m+j-2}\leq \frac{\delta m(\delta m-1)(\delta n+1)}{\delta m+\delta n-2}.
\end{equation*}

Explicitly, this gives a version of Schmidt's theorem for algebraic points.
\begin{theorem}
\label{galg}
Let $X$ be a projective variety of dimension $n$ defined over a number field $k$.  Let $S$ be a finite set of places of $k$.  For each $v\in S$, let $D_{0,v},\ldots, D_{m,v}$ be effective ample Cartier divisors on $X$, defined over $k$, in $m$-subgeneral position.  Suppose that there exists a Cartier divisor $A$ on $X$, defined over $k$, and positive integers $d_{i,v}$ such that $D_{i,v}\equiv d_{i,v}A$ for all $i$ and for all $v\in S$.  Let $\delta$ be a positive integer with $\delta m\geq 2$ and $\epsilon>0$.  Then the  inequality
\begin{equation*}
\sum_{v\in S}\sum_{\substack{w\in M_{k(P)}\\w\mid v}}\sum_{i=0}^m \frac{\lambda_{D_{i,v},w}(P)}{d_{i,v}}< \left(\frac{\delta m(\delta m-1)(\delta n+1)}{\delta m+\delta n-2}+\epsilon\right)h_A(P)+O(1)
\end{equation*}
holds for all points $P\in X(\kbar)\setminus \cup_{i,v} \Supp D_{i,v}$ satisfying $[k(P):k]\leq \delta$.
\end{theorem}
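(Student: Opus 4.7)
The plan is to obtain Theorem~\ref{galg} as a direct combination of the two results just developed: Theorem~\ref{compth}, which reduces a bounded-degree approximation problem on $X$ to a rational-point problem on the symmetric power $X^{(\delta)}$, and Theorem~\ref{degen}, which supplies the estimate $A(m,n) \leq m(m-1)(n+1)/(m+n-2)$ in the $m$-subgeneral-position regime $m \geq 2$. Given both, the assembly is almost formal; one merely has to unpack the exceptional-set formalism and perform a short arithmetic comparison.

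Concretely, the first step is to invoke Theorem~\ref{compth} with $l = 1$, which yields the chain
\begin{equation*}
C(\delta, 1, m, n) \leq B(1, \delta m, \delta n) \leq \max_{1 \leq j \leq \delta n} A(\delta m, j).
\end{equation*}
Geometrically, the outer inequality packages the symmetric-power reduction: Galois orbits of algebraic points of degree at most $\delta$ on $X$ correspond to rational points on $X^{(\delta)}$, which has dimension $\delta n$, and the natural pushforward-pullback of the original divisors produces $m+1$ divisors in $\delta m$-subgeneral position on $X^{(\delta)}$; the middle inequality then stratifies the exceptional set on $X^{(\delta)}$ by the dimensions $j$ of its irreducible components. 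Because $\delta m \geq 2$ by hypothesis, Theorem~\ref{degen} applies to each $A(\delta m, j)$, giving $A(\delta m, j) \leq \delta m (\delta m - 1)(j+1)/(\delta m + j - 2)$. A derivative computation on $f(j) = (j+1)/(j + \delta m - 2)$ shows $f'(j) = (\delta m - 3)/(j + \delta m - 2)^2 \geq 0$ whenever $\delta m \geq 3$, so the maximum over $1 \leq j \leq \delta n$ is attained at the upper endpoint $j = \delta n$, yielding the claimed constant $\delta m (\delta m - 1)(\delta n + 1)/(\delta m + \delta n - 2)$.

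To translate back to the explicit Diophantine form of the theorem, I would unpack the definition of $C(\delta, 1, m, n)$: the inequality just derived says that outside a Zariski-closed subset of dimension strictly less than $1$ -- that is, outside a finite set of points -- every algebraic point $P \in X(\kbar)$ with $[k(P):k] \leq \delta$ satisfies the stated inequality for any prescribed $\epsilon > 0$. The finitely many exceptional points contribute only a bounded height and are absorbed into the $O(1)$ term in the conclusion. The main obstacle in this deduction is not the assembly, which is pure bookkeeping once Theorems~\ref{compth} and~\ref{degen} are in hand, but rather Theorem~\ref{degen} itself: its proof must confront the gap between $m$-subgeneral position and general position, and I expect it will require combining the given divisors with carefully chosen auxiliary effective divisors so that, after applying Matsusaka's very-ampleness criterion (Theorem~\ref{numample}), the Evertse--Ferretti theorem becomes applicable to an augmented system in true general position.
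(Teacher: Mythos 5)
Your assembly is exactly the paper's: the paper obtains Theorem~\ref{galg} by invoking Theorem~\ref{compth} with $l=1$ to get $C(\delta,1,m,n)\le\max_{1\le j\le\delta n}A(\delta m,j)$, bounding each $A(\delta m,j)$ by Theorem~\ref{degen}, and then unwinding the exceptional-set formalism into the ``all but finitely many'' form, with the finite exceptional set absorbed into the $O(1)$. Your reading of the $C$, $B$, $A$ bookkeeping and the absorption step is correct.

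However, your own derivative computation exposes the step that does not close. Since $f'(j)=(\delta m - 3)/(j+\delta m-2)^2$, the function $j\mapsto \delta m(\delta m-1)(j+1)/(\delta m+j-2)$ is increasing only when $\delta m\ge 3$; for $\delta m=2$ it is strictly decreasing, so the maximum over $1\le j\le\delta n$ sits at $j=1$ and equals $\frac{2\cdot 1\cdot 2}{1}=4$, which exceeds the claimed constant $\frac{2\cdot 1\cdot(\delta n+1)}{\delta n}=3$ when $\delta n=2$. The hypothesis $\delta m\ge 2$ permits this (with $m\ge n$ the problematic cases are $(\delta,m,n)=(1,2,2)$ and $(2,1,1)$), so the ``so ... yielding the claimed constant'' in your last sentence does not actually follow for $\delta m=2$. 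Notably this gap is present in the paper itself: its proof of Theorem~\ref{degen} asserts that $m(m-1)(n+1)/(m+n-2)$ is increasing in $n$, which is false for $m=2$, and one can verify directly (for instance $X=\mathbb{P}^2$, $S=\{v_1,v_2\}$, three lines per place varying with $v$, and rational points on the line through the two relevant intersection points) that the proximity sum reaches $4h_A(P)+O(1)$ on a Zariski-dense set, so $3+\epsilon$ cannot hold when $(\delta,m,n)=(1,2,2)$. The argument is repaired either by strengthening the hypothesis to $\delta m\ge 3$ or by stating the constant as $\max\left(2\delta m,\ \frac{\delta m(\delta m-1)(\delta n+1)}{\delta m+\delta n-2}\right)$; as written, your proof (like the paper's) is valid only under $\delta m\ge 3$.
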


If $m=n\geq 2$, then using $A(\delta n,\delta n)\leq \delta n+1$ gives the slightly better bound
\begin{equation*}
C(\delta, 1,n,n)\leq \max_{1\leq j\leq \delta n}A(\delta n,j)\leq (\delta n)^2\frac{\delta n-1}{2\delta n-3}.
\end{equation*}
As a special case, this gives Theorem \ref{genth} from the introduction.  We note that this bound is essentially quadratic in $\delta n$, while by Example \ref{exinf}, $C(\delta,1,n,n)\geq 2\delta n$ (and it's plausible that equality holds).   

Before proving Theorem \ref{degen} we prove a few lemmas.

\begin{lemma}
\label{RR}
Let $X$ be a projective variety of dimension $n$ that is regular in codimension one.  Let $A$ and $D$ be effective big Cartier divisors on $X$ with $D\equiv A$.  Suppose that there exists an integer $N_0$ such that $|NA|$ is basepoint free for $N\geq N_0$.  For $N\geq N_0$ a positive integer, let $\phi_{NA}:X\to \mathbb{P}^{l(NA)-1}$ be the natural morphism associated to $NA$.  Then for $N\geq N_0$ there exists a set $\mathcal{H}$ of $l(NA)$ hyperplanes of $\mathbb{P}^{l(NA)-1}$ in general position such that
\begin{equation*}
\sum_{H\in \mathcal{H}} \phi_{NA}^*H> \left(\frac{A^n}{(n+1)!}N^{n+1}+O(N^n)\right)D.
\end{equation*}
\end{lemma}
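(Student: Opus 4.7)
The strategy is to construct $\mathcal{H}$ from a basis of $H^0(X,NA)$ adapted to the filtration by order of vanishing along $D$. Put $M = l(NA)-1$ and $V = H^0(X,NA)$, and let $1_D \in H^0(X,\co_X(D))$ denote the tautological section with $\dv(1_D)=D$. Multiplication by $1_D^k$ realizes $V_k := H^0(X,NA-kD)$ as the subspace of $V$ consisting of sections vanishing along $D$ to order at least $k$, giving a decreasing filtration $V = V_0 \supseteq V_1 \supseteq V_2 \supseteq \cdots$. Choose a basis $s_1,\ldots,s_{M+1}$ of $V$ adapted to this filtration (so that for each $k$, a subset of the basis spans $V_k$), and use these as homogeneous coordinates on $\P^M$. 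Take $\mathcal{H}$ to be the $M+1$ coordinate hyperplanes $H_i = \{x_i = 0\}$. Any $j$ of them meet in a coordinate linear subspace of codimension $j$, so $\mathcal{H}$ is in general position, and $\phi_{NA}^*H_i = \dv(s_i)$ as effective divisors on $X$.

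Let $k_i$ be the largest integer with $s_i \in V_{k_i}$, so that $\dv(s_i) \geq k_i D$. Abel summation then yields
\[
\sum_{H\in\mathcal{H}} \phi_{NA}^*H \;\geq\; \Bigl(\sum_{i=1}^{M+1}k_i\Bigr)D \;=\; \Bigl(\sum_{k\geq 1}\dim V_k\Bigr)D \;=\; \Bigl(\sum_{k\geq 1}h^0(X,NA-kD)\Bigr)D.
\]
The lemma is thereby reduced to establishing the lower bound
\[
\sum_{k\geq 1} h^0(X,NA-kD) \;\geq\; \tfrac{A^n}{(n+1)!}N^{n+1} + O(N^n).
\]

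To obtain this, note that by Snapper's theorem $\chi$ is a numerical invariant, so asymptotic Riemann--Roch yields
\[
\chi(X,NA-kD) = \chi(X,(N-k)A) = \tfrac{A^n}{n!}(N-k)^n + O(N^{n-1})
\]
uniformly in $0\leq k\leq N$. Since $|NA|$ being basepoint-free makes $NA$ globally generated and hence nef, and since nefness is a numerical property, the divisor $NA-kD\equiv (N-k)A$ is itself nef for $0\leq k\leq N$; standard higher-cohomology bounds for nef line bundles then give $h^i(X,NA-kD) = O(N^{n-1})$ for $i\geq 1$, uniformly in $k$. Combining these, $h^0(X,NA-kD)\geq \frac{A^n}{n!}(N-k)^n + O(N^{n-1})$, and summing over $k=1,\ldots,N$ using $\sum_{j=0}^{N-1}j^n = \frac{N^{n+1}}{n+1}+O(N^n)$ completes the argument.

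The main technical obstacle is ensuring the uniformity of the higher-cohomology bound as the numerical class $(N-k)A$ varies in a bounded family. This can be dealt with by Fujita's vanishing theorem, or more concretely via Kodaira's lemma: since $A$ is big, there exists $m_0$ with $m_0A - D\sim F$ effective, whence $h^0(X,NA-kD)\geq h^0(X,(N-km_0)A)$ by multiplying by $1_F^k$; the loss of the factor $m_0$ in the leading coefficient can then be made negligible by first replacing $A$ and $D$ by sufficiently high multiples, so that the corresponding effective $m_0$ can be taken arbitrarily close to $1$.
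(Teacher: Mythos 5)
Your construction follows the same skeleton as the paper's: filter $L(NA)$ by order of vanishing along $D$, take a basis adapted to that filtration, use the coordinate hyperplanes (which are automatically in general position) and Abel-sum to reduce the lemma to the estimate $\sum_{k\geq 1} h^0(NA-kD) \geq \frac{A^n}{(n+1)!}N^{n+1}+O(N^n)$. The divergence is in how that estimate is obtained. The paper first proves the case $D\sim A$, where $NA-kD\sim (N-k)A$ are all powers of the \emph{single} nef line bundle $A$, so the needed asymptotic Riemann--Roch is the one-parameter statement $l(mA)=\frac{A^n}{n!}m^n+O(m^{n-1})$; it then reduces $D\equiv A$ to $D\sim A$ by the Matsusaka-based trick of Section 3. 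You instead attack $D\equiv A$ directly, and as you correctly note this requires a \emph{uniform} higher-cohomology bound for the two-parameter family $\{NA-kD\}$ of line bundles, which are numerically but not linearly identified with powers of $A$.

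Of your two suggested fixes, the Fujita-vanishing one works (take $H=A$ when $A$ is ample; then $H^i(NA-kD)=0$ for $i>0$ and $N-k\geq m_0$, and the finitely many remaining $k$ are absorbed into the $O(N^n)$). But the Kodaira-lemma alternative as you wrote it has a genuine gap: you need an integer $m_0$ with $m_0A-D$ linearly equivalent to an effective divisor, and replacing $(A,D)$ by $(aA,aD)$ gives $m_0(aA)-aD=a(m_0A-D)$, so the minimal such integer is unchanged; since $A-D\equiv 0$ is not effective (it is big plus numerically trivial only when it is zero), one always has $m_0\geq 2$, and the "pass to high multiples so $m_0\to 1$" step cannot be executed. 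That leaves a loss of a factor at least $1/2$ in the leading coefficient, which is fatal for the lemma. Also worth flagging: both your Fujita route and the paper's Matsusaka reduction quietly use ampleness of $A$, while the lemma is stated for $A$ big with $|NA|$ eventually basepoint free; this is harmless for the applications in the paper (where $A$ is always a finite pullback of an ample divisor, hence ample), but you should note the assumption rather than lean on the weaker stated hypotheses.
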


\begin{proof}
We first assume that $D\sim A$.  Let $N\geq N_0$ be a positive integer.  Let $V_i=L(NA-iD)$.  We consider the filtration
\begin{equation*}
L(NA)=V_0\supset V_1\supset V_2\supset \cdots \supset V_N\supset V_{N+1}=\{0\}.
\end{equation*}
Let $B$ be a basis of $L(NA)$ obtained by taking a basis of $V_N$ and successively completing this basis to a basis of $V_{N-1}$, $V_{N-2}, \ldots, V_0$.  For every rational function $f\in L(NA)$ there is a corresponding hyperplane $H\subset \mathbb{P}^{l(NA)-1}$ such that $\phi_{NA}^*H=\dv(f)+NA$.  So if $f\in V_i=L(NA-iD)$ and $H$ is the corresponding hyperplane, then $\phi_{NA}^*H\geq iD$.  Let $\mathcal{H}$ be the set of hyperplanes corresponding to the basis $B$.  Since $B$ is a basis of $L(NA)$, the hyperplanes in $\mathcal{H}$ are in general position.  As $D\sim A$, $l(NA-iD)=l((N-i)A)$.  Note also that $A$ is a nef divisor since $|NA|$ is basepoint free.  By Riemann-Roch, for $0\leq i\leq N$,
\begin{equation*}
\dim V_i=l(NA-iD)=l((N-i)A)=(N-i)^n\frac{A^n}{n!}+O(N^{n-1}).
\end{equation*}
It follows that
\begin{align*}
\sum_{H\in \mathcal{H}} \phi_{NA}^*H&\geq \left(\sum_{i=0}^N i\dim V_i/V_{i+1}\right)D=\left(\sum_{i=1}^N \dim V_i\right)D\\
&\geq\left(\sum_{i=1}^N (N-i)^n\frac{A^n}{n!}+O(N^{n-1})\right)D\\
&\geq \left(\frac{A^n}{(n+1)!}N^{n+1}+O(N^n)\right)D,
\end{align*}
proving the theorem in this case.  The case $D\equiv A$ follows from this case using the same method as in Section \ref{snum}.
\end{proof}

We'll also make use of the following linear algebra lemma \cite[Lemma 3.2]{CZ2}.
\begin{lemma}
\label{fil}
Let $V$ be a vector space of finite dimension $d$.  Let $V=W_1\supset W_2\supset \cdots \supset W_h$ and $V=W_1^*\supset W_2^*\supset \cdots \supset W_{h^*}^*$ be two filtrations of $V$.  There exists a basis $v_1,\dots, v_d$ of $V$ which contains a basis of each $W_j$ and $W_j^*$.
\end{lemma}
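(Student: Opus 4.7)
The plan is to prove the lemma by induction on $d = \dim V$, with the base case $d = 0$ being trivial. After dropping repeated terms in each filtration I may assume all containments are strict and in particular that $U := W_h \neq 0$, so that $\dim(V/U) < d$.

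The first step is to select a basis $u_1, \ldots, u_k$ of $U$ compatible with the single induced filtration $U \cap W_1^* \supset U \cap W_2^* \supset \cdots \supset U \cap W_{h^*}^*$, which is the standard one-filtration construction (extend a basis of the smallest piece, step by step). Because $U \subset W_j$ for every $j \leq h$, this basis automatically respects the first filtration restricted to $U$. Next, applying the inductive hypothesis to $V/U$ equipped with the two quotient filtrations $\{W_j/U\}_{j \leq h-1}$ and $\{(W_j^* + U)/U\}_{1 \leq j \leq h^*}$ yields a basis $\bar w_1, \ldots, \bar w_{d-k}$ of $V/U$ respecting both. Picking any lifts $w_i \in V$ of the $\bar w_i$, the concatenation $u_1, \ldots, u_k, w_1, \ldots, w_{d-k}$ is a basis of $V$; the first filtration on $V$ is automatically respected, since $\bar w_i \in W_j/U$ forces $w_i \in W_j + U = W_j$ for any $j \leq h$.

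The delicate point, which I expect to be the main obstacle, is arranging that the chosen lifts actually lie inside the $W_j^*$'s rather than merely inside $W_j^* + U$. For each $i$, let $j^*$ be the largest index with $\bar w_i \in (W_{j^*}^* + U)/U$; by definition there exist $x \in W_{j^*}^*$ and $u \in U$ with $w_i = x + u$, and I replace $w_i$ by $x$. The adjusted lift lies in $W_{j^*}^*$, hence in $W_j^*$ for every $j \leq j^*$. Crucially, the correction $-u$ lies in $U = W_h \subset W_j$ for every $j$ in the range of the first filtration, so membership $w_i \in W_j$ is preserved by the adjustment and the first filtration continues to be respected.

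A final routine verification confirms that, after these adjustments, for each $j$ the vectors among $u_1, \ldots, u_k, w_1, \ldots, w_{d-k}$ that lie in $W_j$ form a basis of $W_j$ (their residues span $W_j/U$ by construction, and the $u_i$'s span $U$), and similarly for $W_j^*$ (the $u_i$'s in $W_j^*$ give a basis of $U \cap W_j^*$, and the adjusted $w_i$'s in $W_j^*$ represent a basis of $(W_j^* + U)/U$). This completes the induction.
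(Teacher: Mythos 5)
Your proof is correct. The paper does not actually include a proof of this lemma -- it cites it as \cite[Lemma 3.2]{CZ2}, so there is no in-paper argument to compare against; the Corvaja--Zannier proof is the usual direct bifiltration construction (choose, for each pair $(j,j')$, vectors whose images form a basis of $(W_j\cap W_{j'}^*)/((W_{j+1}\cap W_{j'}^*)+(W_j\cap W_{j'+1}^*))$ and verify via the telescoping dimension count), whereas you give an inductive argument on $\dim V$. Both are elementary; your route replaces the dimension bookkeeping of the direct construction with a quotient-by-$U=W_h$ and a careful choice of lifts, which is arguably cleaner to verify.

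Two small points you should tighten. First, after removing repetitions the filtration may still end at $W_h=\{0\}$, in which case $U=0$ and the quotient $V/U=V$ does not reduce the dimension; you should also delete a trailing zero term (harmless, since the empty set is a basis of $\{0\}$) so that $U\neq 0$. Second, in the final verification of the second filtration you invoke that the adjusted $w_i$'s lying in $W_j^*$ are exactly those with $\bar w_i\in (W_j^*+U)/U$; you only argue the inclusion $j\leq j^*\Rightarrow w_i\in W_j^*$. The reverse implication -- that $w_i\notin W_j^*$ for $j>j^*$ -- is what actually makes the indexing match, and it follows from the maximality of $j^*$: if $w_i\in W_j^*$ with $j>j^*$ then $\bar w_i\in(W_j^*+U)/U$, a contradiction. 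Stating this explicitly closes the only real gap in the write-up.
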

%For two effective Cartier divisors $D$ and $E$ on a variety $X$, let
%\begin{equation*}
%\lcm\{D,E\}=\sum_F \max\{\ord_F D,\ord_F E\}F
%\end{equation*}
%where $F$ ranges over all effective divisors on $X$.

\begin{lemma}
\label{lhyp}
Let $X$ be a projective variety of dimension $n$ that is regular in codimension one.  Let $A$, $D_1$, and $D_2$ be effective big Cartier divisors on $X$ with $A\equiv D_1\equiv D_2$.  Suppose that there exists an integer $N_0$ such that $|NA|$ is basepoint free for $N\geq N_0$.  For $N\geq N_0$ a positive integer, let $\phi_{NA}:X\to \mathbb{P}^{l(NA)-1}$ be the natural morphism associated to $NA$.  Then for $N\geq N_0$ there exists a set $\mathcal{H}$ of $l(NA)$ hyperplanes of $\mathbb{P}^{l(NA)-1}$ in general position such that
\begin{equation*}
\sum_{H\in \mathcal{H}} \phi^*H> \left(\frac{A^n}{(n+1)!}N^{n+1}+O(N^n)\right)\lcm(D_1,D_2).
\end{equation*}
\end{lemma}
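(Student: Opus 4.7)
The strategy is to adapt the proof of Lemma \ref{RR} by running two filtrations of $L(NA)$ in parallel, combined via Lemma \ref{fil}. As in Lemma \ref{RR}, I would first treat the case $D_1 \sim A$ and $D_2 \sim A$; the reduction from numerical to linear equivalence then follows by the same Matsusaka-based argument sketched in Lemma \ref{RR} and Section \ref{snum}.

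Assume $D_1 \sim A$ and $D_2 \sim A$. Consider the two descending filtrations of $L(NA)$ given by
\[
V_i = L(NA - iD_1), \qquad V_j^* = L(NA - jD_2),
\]
and apply Lemma \ref{fil} to pick a basis $B$ of $L(NA)$ that simultaneously contains a basis of every $V_i$ and every $V_j^*$. For $f \in B$, set
\[
i(f) = \max\{i \geq 0 : f \in V_i\}, \qquad j(f) = \max\{j \geq 0 : f \in V_j^*\},
\]
and let $\mathcal{H}$ be the set of $l(NA)$ hyperplanes corresponding to $B$ under $\phi_{NA}$; the hyperplanes in $\mathcal{H}$ are in general position because $B$ is a basis of $L(NA)$. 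For each $f \in B$ with corresponding $H$, the memberships $f \in V_{i(f)}$ and $f \in V_{j(f)}^*$ yield $\phi_{NA}^*H \geq i(f)D_1$ and $\phi_{NA}^*H \geq j(f)D_2$ as Weil divisors, so
\[
\phi_{NA}^*H \;\geq\; \max\bigl(i(f)\, D_1,\ j(f)\, D_2\bigr),
\]
where the maximum is taken coefficient-wise on each prime divisor.

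The key step is to extract from this a bound in terms of $\lcm(D_1, D_2)$. Fix a prime divisor $F$, and assume without loss of generality that $\ord_F D_1 \geq \ord_F D_2$, so that $\ord_F \lcm(D_1,D_2) = \ord_F D_1$. Then $\ord_F \phi_{NA}^*H \geq i(f)\, \ord_F D_1 = i(f)\, \ord_F \lcm(D_1, D_2)$, and summing over $f \in B$ using $\#\{f \in B : i(f) \geq k\} = \dim V_k$ gives
\[
\ord_F \!\Bigl(\sum_{H \in \mathcal{H}} \phi_{NA}^*H\Bigr) \;\geq\; \Bigl(\sum_{k=1}^N \dim V_k\Bigr) \ord_F \lcm(D_1, D_2).
\]
The inner sum is exactly the quantity estimated in Lemma \ref{RR}: since $D_1 \sim A$, $\dim V_k = l((N-k)A)$, and Riemann--Roch yields $\sum_{k=1}^N \dim V_k \geq \frac{A^n}{(n+1)!}N^{n+1} + O(N^n)$. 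The case $\ord_F D_2 > \ord_F D_1$ is symmetric, using $j(f)$ and the filtration $V^*$ in place of $i(f)$ and $V$. Summing over prime divisors $F$ gives the desired divisor-valued inequality.

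The main obstacle is precisely this coefficient-wise step. The compatible basis supplies only two unrelated indices $i(f)$ and $j(f)$, so the pointwise estimate $\phi_{NA}^*H \geq \max(i(f)D_1, j(f)D_2)$ does not package into a uniform divisor-valued lower bound of the shape $(\text{const})\,\lcm(D_1, D_2)$ in a single stroke. Working one prime divisor $F$ at a time, and for each $F$ using whichever of $D_1$ or $D_2$ attains the larger order at $F$, circumvents this: the Lemma \ref{RR} counting argument is applied to the appropriate one of the two filtrations on each prime separately, and reassembling the prime-by-prime bounds yields the uniform multiple of $\lcm(D_1,D_2)$.
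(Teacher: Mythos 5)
Your proposal is correct and follows essentially the same route as the paper: reduce to linear equivalence, invoke Lemma~\ref{fil} to obtain a basis $B$ of $L(NA)$ compatible with both filtrations $V_i = L(NA-iD_1)$ and $V_j^* = L(NA-jD_2)$, and then run the counting argument from Lemma~\ref{RR}. The only organizational difference is that the paper simply observes that, with this single basis, the Lemma~\ref{RR} computation already yields both $\sum_{H\in\mathcal H}\phi^*H > (\cdot)D_1$ and $\sum_{H\in\mathcal H}\phi^*H > (\cdot)D_2$ simultaneously, and since the lcm is computed coefficient-wise these two inequalities immediately give the lcm bound; your prime-by-prime bookkeeping is just an expansion of that last step, so the ``obstacle'' you flag at the end is not really an obstacle.
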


\begin{proof}
Consider the two filtrations of $L(NA)$ coming from looking at the order of vanishing along $D_1$ and $D_2$, as in Lemma \ref{RR}.  Let $B$ be the basis of $L(NA)$ that Lemma \ref{fil} gives with respect to these two filtrations.  Let $\mathcal{H}$ be the corresponding set of hyperplanes in $\mathbb{P}^{l(NA)-1}$.  Then by Lemma \ref{RR} and the definition of $B$, 
\begin{align*}
\sum_{H\in \mathcal{H}} \phi^*H> \left(\frac{A^n}{(n+1)!}N^{n+1}+O(N^n)\right)D_1,\\
\sum_{H\in \mathcal{H}} \phi^*H> \left(\frac{A^n}{(n+1)!}N^{n+1}+O(N^n)\right)D_2.  
\end{align*}
It follows that $\sum_{H\in \mathcal{H}} \phi^*H> \left(\frac{A^n}{(n+1)!}N^{n+1}+O(N^n)\right)\lcm(D_1,D_2)$.
\end{proof}

We now prove Theorem \ref{degen}.

\begin{proof}[Proof of Theorem \ref{degen}]
For $n=1$ the theorem follows immediately from Faltings' theorem on rational points on curves and the classical Diophantine approximation results for curves of genus zero and one.  We assume from now on that $n\geq 2$.  Since for any $v\in S$ the divisors $D_{0,v},\ldots, D_{m,v}$ are in $m$-subgeneral position, by the same reasoning that led to \eqref{eqred}, after reindexing the divisors $D_{i,v}$ it suffices to prove that the slightly weaker inequality
\begin{equation*}
\sum_{v\in S}\sum_{i=1}^m \frac{\lambda_{D_{i,v},v}(P)}{d_{i,v}}< \left(\frac{m(m-1)(n+1)}{m+n-2}+\epsilon\right)h_A(P)+O(1)
\end{equation*}
holds for all $P\in X(k)\setminus \cup_{i,v} \Supp D_{i,v}$.

We first prove the case where $d_{i,v}=1$ for all $i$ and $v$.  Suppose first that $X$ is normal and so, in particular, regular in codimension one.  Let $N$ be a positive integer such that $NA$ is very ample.  Let $\phi=\phi_{NA}:X\to \mathbb{P}^{l(NA)-1}$ be the corresponding embedding.  By Lemma \ref{lhyp}, for each choice of $i$, $j$, and $v$, we have a set $\mathcal{H}_{i,j,v}$ of $l(NA)$ hyperplanes of $\mathbb{P}^{l(NA)-1}$ in general position satisfying
\begin{equation}
\label{eqij}
\sum_{H\in \mathcal{H}_{i,j,v}} \phi^*H> \left(\frac{A^n}{(n+1)!}N^{n+1}+O(N^n)\right)\lcm(D_{i,v},D_{j,v}).
\end{equation}
Moreover, since all of our objects are defined over $k$, the hyperplanes in $\mathcal{H}_{i,j,v}$ may be chosen to be defined over $k$.  Fixing $i$ and $j$ and applying Schmidt's theorem to $\mathbb{P}^{l(NA)-1}$ and the hyperplanes $\mathcal{H}_{i,j,v}$, $v\in S$, we find that
\begin{equation}
\label{eqS}
\sum_{v\in S}\sum_{H\in \mathcal{H}_{i,j,v}} \lambda_{H,v}(P)< \left(l(NA)+\epsilon\right)h(P)+O(1)
\end{equation}
for all $P$ in $\mathbb{P}^{l(NA)-1}(k)$ outside of some finite union of hyperplanes.  By \eqref{eqij} and the functoriality and additivity of Weil functions, for all $P\in X(k)$ outside of a proper closed subset of $X$, we have, up to $O(1)$,
\begin{align}
\label{eqW}
\sum_{v\in S}\sum_{H\in \mathcal{H}_{i,j,v}} \lambda_{H,v}(\phi(P))&=\sum_{v\in S}\sum_{H\in \mathcal{H}_{i,j,v}} \lambda_{\phi^*H,v}(P)\notag\\
&\geq \left(\frac{A^n}{(n+1)!}N^{n+1}+O(N^n)\right)\sum_{v\in S} \lambda_{\lcm(D_{i,v},D_{j,v}),v}(P).
\end{align}
Similarly, up to $O(1)$,
\begin{equation*}
\left(l(NA)+\epsilon\right)h(\phi(P))=\left(l(NA)+\epsilon\right)h_{NA}(P)=N\left(l(NA)+\epsilon\right)h_{A}(P).
\end{equation*}
Since $l(NA)=\frac{N^n}{n!}A^n+O(N^{n-1})$, we have
\begin{equation}
\label{eqh}
\left(l(NA)+\epsilon\right)h(\phi(P))=\left(\frac{A^n}{n!}N^{n+1}+O(N^{n})\right)h_A(P)+O(1).
\end{equation}
Using \eqref{eqW} and \eqref{eqh} and applying \eqref{eqS} to $\phi(P)$, we find that
\begin{equation*}
\left(\frac{A^n}{(n+1)!}N^{n+1}+O(N^n)\right)\sum_{v\in S} \lambda_{\lcm(D_{i,v},D_{j,v}),v}(P)<\left(\frac{A^n}{n!}N^{n+1}+O(N^{n})\right)h_A(P)
\end{equation*}
or
\begin{equation*}
\sum_{v\in S} \lambda_{\lcm(D_{i,v},D_{j,v}),v}(P)<(n+1+O(1/N))h_A(P)
\end{equation*}
for all $P$ in $X(k)$ outside of a proper closed subset of $X$.  Choosing $N$ sufficiently large, we find that
\begin{equation}
\label{fundineq}
\sum_{v\in S} \lambda_{\lcm(D_{i,v},D_{j,v}),v}(P)<\left(n+1+\epsilon\right)h_A(P)
\end{equation}
for all $P$ in $X(k)$ outside of a proper closed subset of $X$.  Summing over all $m(m-1)$ distinct $i,j\in \{1,\ldots, m\}$, we obtain
\begin{equation}
\label{mmeq}
\sum_{v\in S} \sum_{\substack{i,j=1\\i\neq j}}^m\lambda_{\lcm(D_{i,v},D_{j,v}),v}(P)<m(m-1)(n+1+\epsilon)h_A(P)
\end{equation}
for all $P$ in $X(k)$ outside of a proper closed subset of $X$.  We now claim that for any $v\in S$,
\begin{equation}
\label{meq}
\sum_{\substack{i,j=1\\i\neq j}}^m\lcm(D_{i,v},D_{j,v})\geq (m+n-2)\sum_{l=1}^mD_{l,v}.
\end{equation}
Fix $i\in \{1,\ldots, m\}$ and $v\in S$.  First, note that any irreducible component $E$ of $D_{i,v}$ can belong to at most $m-n$ divisors $D_{j,v}$, $j\neq i$.  If not, then $E$ would be contained in the intersection of $\geq m-n+2$ divisors $D_{l,v}$, and the intersection of $E$ with the remaining $\leq m+1-(m-n+2)=n-1$ ample effective divisors would be nonempty.  This would violate the fact that the divisors $D_{l,v}$ are in $m$-subgeneral position.  That any irreducible component $E$ of $D_{i,v}$ can belong to at most $m-n$ divisors $D_{j,v}$, $j\neq i$, implies that
\begin{align*}
\sum_{\substack{j=1\\j\neq i}}^m \lcm(D_{i,v},D_{j,v})&\geq (m-1-(m-n))D_{i,v}+\sum_{\substack{j=1\\j\neq i}}^m D_{j,v}\\
&\geq (n-1)D_{i,v}+\sum_{\substack{j=1\\j\neq i}}^m D_{j,v}.
\end{align*}
Summing over all $i$, we arrive at \eqref{meq}.  Using the additivity of Weil functions, combining \eqref{mmeq} and \eqref{meq}, we obtain that for any $\epsilon>0$, 
\begin{equation}
\label{mmeq2}
\sum_{v\in S} \sum_{i=1}^m\lambda_{D_{i,v},v}(P)<\left(\frac{m(m-1)(n+1)}{m+n-2}+\epsilon\right)h_A(P)
\end{equation}
for all $P$ in $X(k)$ outside of a proper closed subset $Z$ of $X$.

If $X$ isn't normal then we consider the normalization $\pi:\tilde{X}\to X$ and the divisors $\pi^*A$ and $\pi^*D_{i,v}$ for all $i$ and $v$.  Note that for $v\in S$, the divisors $\pi^*D_{0,v},\ldots, \pi^*D_{m,v}$ are again in $m$-subgeneral position.  Since $A$ is ample, $\pi^*A$ is big and $|N\pi^*A|$ is basepoint free for sufficiently large $N$.  Then the argument above gives
\begin{equation*}
\sum_{v\in S} \sum_{i=1}^m\lambda_{\pi^*D_{i,v},v}(P)<\left(\frac{m(m-1)(n+1)}{m+n-2}+\epsilon\right)h_{\pi^*A}(P)
\end{equation*}
for all $P$ in $\tilde{X}(k)$ outside of a proper closed subset $\tilde{Z}$ of $\tilde{X}$.  By functoriality we again obtain \eqref{mmeq2} for all $P$ in $X(k)$ outside of a proper closed subset $Z=\pi(\tilde{Z})$ of $X$.

Note that we can again apply an inequality as in \eqref{mmeq2} to any irreducible component $Z'$ of $Z$ (outside $\cup_{i,v}D_{i,v}$) and the divisors $D_{i,v}$ and $A$ restricted to $Z'$.  Since $\frac{m(m-1)(n+1)}{m+n-2}$ is an increasing function of $n$, by induction we find that \eqref{mmeq2} holds for all but finitely many $P$ in $X(k)\setminus \cup_{i,v}\Supp D_{i,v}$.  This proves the theorem in the case $d_{i,v}=1$ for all $i$ and $v$.  The general case follows easily from this case, however, by applying this case of the theorem to the divisors $\frac{\lcm_{j,v}\{d_{j,v}\}}{d_{i,v}}D_{i,v}$ and $\lcm_{j,v}\{d_{j,v}\}A$.
\end{proof}

\section{Schmidt's Theorem for quadratic points in the plane}
\label{salg}

We begin with a Diophantine approximation result for curves in the projective plane.

\begin{theorem}
\label{thl}
Let $S$ be a finite set of places of a number field $k$.  Let $L_1,\ldots, L_q$ be distinct lines over $k$ in $\mathbb{P}^2$ in $m$-subgeneral position.  Let $D=\sum_{i=1}^qL_i$.  Let $\epsilon>0$.  Let $C$ be a projective curve in $\mathbb{P}^2$.
\begin{enumerate}
\item  The inequality
\begin{equation}
\label{thl1}
m_{D,S}(P)<(2\delta m+\epsilon)h(P)+O(1)
\end{equation}
holds for all points $P\in C(\kbar)\setminus \Supp D\subset \mathbb{P}^2$ satisfying $[k(P):k]\leq \delta$.
\item  If $C$ is not a line, the inequality
\begin{equation}
\label{thl2}
m_{D,S}(P)<((m+1)\delta+\epsilon)h(P)+O(1)
\end{equation}
holds for all points $P\in C(\kbar)\setminus \Supp D\subset \mathbb{P}^2$ satisfying $[k(P):k]\leq \delta$.
\end{enumerate}
\end{theorem}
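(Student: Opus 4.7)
My plan is to reduce both parts to a Wirsing-type bound on the normalization $\tilde{C}$ of $C$, combined with a tangency count that exploits $m$-subgeneral position. Let $\pi:\tilde C\to C$ be the normalization, set $\tilde L_i:=\pi^*(L_i|_C)$---an effective divisor on $\tilde C$ of degree $d:=\deg C$---and fix a degree-one ample divisor $A$ on $\tilde C$, so that $h(P)=d\cdot h_A(P)+O(1)$. Decompose $\sum_{i=1}^{q}\tilde L_i=\sum_Q c_Q\cdot Q$ over distinct points $Q\in\tilde C(\kbar)$.

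The geometric ingredient is the bound $c_Q\le m+d-1$ for every $Q$. By $m$-subgeneral position, at most $m$ of the lines $L_i$ pass through $\pi(Q)$. The local branch of $C$ parametrized by $Q\in\tilde C$ has a unique tangent direction at $\pi(Q)$, so at most one of these $\le m$ lines coincides with that tangent; that tangent line contributes at most $d$ to $c_Q$ by Bezout, while each of the remaining $\le m-1$ non-tangent lines contributes exactly $1$. (Singular points of $C$ are handled by a branch-by-branch analysis on the normalization, yielding the same bound.) Additivity of Weil functions then gives
\[
\sum_{i=1}^q m_{L_i,S}(P)=\sum_Q c_Q\,m_{Q,S}(P)+O(1)\le (m+d-1)\sum_Q m_{Q,S}(P)+O(1).
\]

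The main analytic ingredient is a Wirsing-type bound for algebraic points of degree $\le\delta$ on the smooth curve $\tilde C$: for any $\epsilon>0$,
\[
\sum_Q m_{Q,S}(P)<(2\delta+\epsilon)\,h_A(P)+O(1),
\]
outside a finite exceptional set. When $\tilde C\cong\mathbb{P}^1$ (in particular whenever $d\le 2$) this is the many-point version of Wirsing's theorem. For higher-genus $\tilde C$ (which can occur when $d\ge 3$), I would derive it via the symmetric-product reduction of Section \ref{slog}: $P$ corresponds to a $k$-rational point of $\tilde C^{(\delta)}$, and one applies Evertse-Ferretti (Theorem \ref{Ev}), or its numerical-equivalence extension (Theorem \ref{mev}), on this $\delta$-dimensional variety after embedding it via a suitable very ample line bundle. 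Combining the two estimates yields
\[
\sum_{i=1}^q m_{L_i,S}(P)<(m+d-1)(2\delta+\epsilon)\,h(P)/d+O(1).
\]
Part (a) then follows from $(m+d-1)\cdot 2/d\le 2m$, which is trivial for $d,m\ge 1$. Part (b), where $d\ge 2$, follows from $(m+d-1)\cdot 2/d\le m+1$, equivalent to $(m-1)(d-2)\ge 0$. The main obstacle is establishing the Wirsing-type bound on a smooth projective curve of positive genus, together with the branch-by-branch tangency analysis needed at singular points of $C$.
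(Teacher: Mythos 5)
The main geometric step is the bound $c_Q \leq m + d - 1$, and this is where the argument breaks: for a singular branch of $C$ at $\pi(Q)$, a line that is not tangent to the branch contributes not $1$ to $c_Q$ but the branch multiplicity $\mu_Q$, which can be as large as $d-1$. For instance, on the cuspidal cubic $y^2 z = x^3$ with $Q$ the point of $\tilde{C}\cong\mathbb{P}^1$ above the cusp, any line through the cusp other than $y=0$ pulls back with multiplicity $2$; so if $m$ of the lines $L_i$ pass through the cusp (one of them tangent), then $c_Q = 2(m-1)+3 = 2m+1 > m+2$ as soon as $m\geq 2$. With the corrected local estimate $c_Q\leq (m-1)\mu_Q + d$, part (a) still follows, since $\mu_Q\leq d$ gives $c_Q\leq md$ and $md\cdot 2\delta/d = 2\delta m$; but part (b) does not, because $(m-1)(d-1)+d$ makes the resulting coefficient exceed $(m+1)\delta$ whenever $m\geq 2$ and $d\geq 3$. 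The paper's proof of (b) overcomes this by blowing up and isolating the single worst point: an irreducible plane curve of degree $d\geq 2$ has at most one point of multiplicity $>d/2$, so every other $Q$ satisfies $c_Q\leq d+(m-1)\mu'$ with $\mu'\leq d/2$, and the one bad point contributes only a low-degree extra divisor $G$ whose proximity is absorbed by the trivial bound $m_{G,S}\leq h_G$. Without some version of that maximal-point separation, the $(m+1)\delta$ constant is not reachable from a uniform $c_Q$-bound.

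A secondary point: the Wirsing-type inequality $\sum_Q m_{Q,S}(P) < (2\delta+\epsilon)h_A(P)$ for points of degree $\leq\delta$ on the smooth curve $\tilde{C}$ is exactly Theorem~\ref{corV} (Song--Tucker), which rests on Vojta's arithmetic discriminant inequality. The symmetric-product route you sketch, via Evertse--Ferretti on $\tilde{C}^{(\delta)}$, will not reproduce the linear-in-$\delta$ constant: the point divisors on the $\delta$-fold symmetric power are never in general position, and the machinery of Section~\ref{slog} then yields a bound growing quadratically in $\delta$. You should cite Song--Tucker directly rather than re-derive it.
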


In the proof, we will use the following generalization of Wirsing's theorem proven by Song and Tucker \cite{ST} based on an inequality of Vojta \cite{Voj2}.

\begin{theorem}[Song-Tucker]
\label{corV}
Let $C$ be a nonsingular curve defined over a number field $k$.  Let $A$ be an ample divisor on $C$ and $D$ a reduced effective divisor on $C$, both defined over $k$.  Let $S$ be a finite set of places of $k$.  Let $\delta$ be a positive integer and let $\epsilon>0$.  Then
\begin{equation}
\label{Vojta2}
m_{D,S}(P)\leq (2\delta+\epsilon) h_A(P)+O(1)
\end{equation}
for all points $P\in C(\kbar)\setminus \Supp D$ with $[k(P):k]\leq \delta$.
\end{theorem}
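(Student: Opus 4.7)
The strategy combines two inputs: Vojta's inequality for algebraic points on a smooth projective curve (the cited \cite{Voj2}) and an arithmetic discriminant estimate for points of bounded degree. Vojta's inequality, specialized to curves, asserts that for any reduced effective divisor $D$ on $C$, any ample $A$, any $\delta \geq 1$, and any $\epsilon > 0$,
\begin{equation*}
m_{D,S}(P) + h_{K_C}(P) \leq d_k(P) + \epsilon h_A(P) + O(1)
\end{equation*}
for all $P \in C(\kbar) \setminus \Supp D$ with $[k(P):k] \leq \delta$, where $K_C$ is a canonical divisor on $C$ and $d_k(P)$ denotes the normalized logarithmic discriminant of $k(P)/k$.

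The first step is to rewrite this bound as $m_{D,S}(P) \leq (d_k(P) - h_{K_C}(P)) + \epsilon h_A(P) + O(1)$. The theorem then reduces to establishing the discriminant estimate
\begin{equation*}
d_k(P) - h_{K_C}(P) \leq 2\delta \cdot h_A(P) + O(1)
\end{equation*}
for all $P \in C(\kbar)$ with $[k(P):k] \leq \delta$. As a sanity check, on $C = \mathbb{P}^1$ (with $A$ of degree one, so $h_{K_{\mathbb{P}^1}} = -2h_A$) this reads $d_k(P) \leq (2\delta - 2) h(P) + O(1)$, the classical bound on the discriminant of the minimal polynomial of an algebraic number of degree $\leq \delta$ in terms of its height; on an elliptic curve ($K_C \sim 0$) it reads $d_k(P) \leq 2\delta \cdot h_A(P) + O(1)$, in line with the expected growth of the discriminant of the field of definition.

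For a general curve $C$ I would prove the discriminant estimate by selecting a nonconstant morphism $\pi : C \to \mathbb{P}^1$, comparing $d_k(P)$ with $d_k(\pi(P))$ via the conductor-discriminant tower formula applied to $k(P)/k(\pi(P))/k$, and absorbing the ramification contribution into $h_{K_C}$ through the Riemann-Hurwitz identity $K_C \sim \pi^* K_{\mathbb{P}^1} + R$, where $R$ is the ramification divisor of $\pi$. The main obstacle is precisely this arithmetic-geometric step: one must track the different of $k(P)/k$ across the covering $\pi$ and show that the ramification contributions are bounded, up to $O(1)$, by $h_{K_C}(P)$, a genuine arithmetic estimate rather than a purely formal manipulation. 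Once this is in hand, combining it with Vojta's inequality immediately gives $m_{D,S}(P) \leq (2\delta + \epsilon)h_A(P) + O(1)$, as desired.
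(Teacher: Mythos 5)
The statement you are addressing is not proved in the paper at all: it is imported from Song--Tucker, whose argument runs through Vojta's \emph{unconditional} inequality, in which the right-hand side carries the arithmetic discriminant $d_a(P)$, together with a nontrivial upper bound for $d_a(P)$ for points of bounded degree. Your first ingredient is therefore misquoted: the inequality $m_{D,S}(P)+h_{K_C}(P)\leq d_k(P)+\epsilon h_A(P)+O(1)$ with $d_k(P)$ the normalized field discriminant is Vojta's general \emph{conjecture} for curves (for bounded-degree algebraic points it implies the $abc$ conjecture), not his theorem; the proven statement has $d_a(P)\geq d_k(P)$ in place of $d_k(P)$. This matters for your plan in two ways: you are assuming an open conjecture at the outset, and even a correct bound for the field discriminant $d_k(P)$ would not combine with the unconditional inequality, since the quantity that must be bounded from above is $d_a(P)$ --- and that bound is precisely the hard content of Song--Tucker's paper.

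There is also a quantitative gap in the discriminant estimate itself, even granting your conjectural starting point. Pushing down through a fixed nonconstant $\pi\colon C\to\mathbb{P}^1$ of degree $m$, the tower formula, the bound $d_k(\pi(P))\leq(2\delta-2)h(\pi(P))+O(1)$ on $\mathbb{P}^1$, and absorption of the relative different into $h_R(P)=h_{K_C}(P)+2h_{\pi^*\co(1)}(P)$ give at best
\begin{equation*}
d_k(P)-h_{K_C}(P)\leq 2\delta\, h_{\pi^*\co(1)}(P)+O(1)=\left(\frac{2\delta m}{\deg A}+o(1)\right)h_A(P)+O(1),
\end{equation*}
since $\deg\pi^*\co(1)=m$. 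The theorem must hold for every ample $A$, in particular $\deg A=1$, so this loses a factor at least the gonality of $C$; already for quadratic points on an elliptic curve generic points have $d_k(P)$ asymptotic to $2\delta h_A(P)$, so there is no slack to absorb such a factor. Thus the ``arithmetic-geometric step'' you flag as the main obstacle is not a technicality to be filled in later: the route through a fixed covering and Riemann--Hurwitz cannot reach the constant $2\delta$, and the actual proof proceeds differently, by bounding the arithmetic discriminant of bounded-degree points directly.
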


\begin{proof}[Proof of Theorem \ref{thl}]
Let $C$ be a curve in $\mathbb{P}^2$.  Let $d=\deg C$.  Let $\tilde{C}$ be a normalization of $C$ and let $\phi:\tilde{C}\to \mathbb{P}^2$ denote the composition of the normalization map $\tilde{C}\to C$ with the inclusion map of $C$ into $\mathbb{P}^2$.

We first prove part (a).  The intersection multiplicity of $C$ and $D$ at any point is at most $md$.  So $\phi^*D\leq mdE$ where $E$ is an effective reduced divisor on $\tilde{C}$.  Then for any point $A$ on $\tilde{C}$, we have 
\begin{align*}
m_{D,S}(\phi(P))&\leq mdm_{E,S}(P)+O(1),\\
dh_A(P)&\leq (1+\epsilon)h(\phi(P))+O(1)
\end{align*}
for all $P\in \tilde{C}(\kbar)\setminus \Supp E$.  Since by Theorem \ref{corV}, $m_{E,S}(P)\leq (2\delta+\epsilon)h_A(P)+O(1)$ for all $P\in \tilde{C}(\kbar)\setminus \Supp E$ satisfying $[k(P):k]\leq \delta$, equation \eqref{thl1} follows.

We now prove part (b).  Let $Q\in C(\kbar)\subset \mathbb{P}^2$.  Let $\pi:X\to \mathbb{P}^2$ be the blow-up of $\mathbb{P}^2$ at $Q$ with exceptional curve $E$, and let $C'$ and $D'$ denote the strict transforms of $C$ and $D$, respectively.  Let $\psi:\tilde{C}\to X$ denote the natural induced map.  Let $Q'\in C'(\kbar)$ be a point lying above $Q$.  Then
\begin{equation}
\label{bleq}
\sum_{\substack{\tilde{Q}\in \tilde{C}\\\psi(\tilde{Q})=Q'}}\ord_{\tilde{Q}}(\phi^*D)=(C'.\pi^*D)_{Q'}.
\end{equation}
We first show that
\begin{equation}
\label{eqint}
(C'.\pi^*D)_{Q'}\leq d+(m-1)(C'.E)_{Q'}.
\end{equation}

Let $\mu_P(C)$ denote the multiplicity of $C$ at $P$.  Suppose that $Q'\not\in \Supp D'$.  Since $\pi^*D=D'+lE$ for some $l\leq m$, where $l$ is the number of lines $L_i$ meeting at $Q$, we have $(C'.\pi^*D)_{Q'}=l(C'.E)_{Q'}$.  It is a standard fact that
\begin{equation}
\label{iexc}
C'.E=\sum_{Q'\in C'}(C'.E)_{Q'}=\mu_Q(C)\leq d.
\end{equation}
So  
\begin{equation*}
(C'.\pi^*D)_{Q'}\leq (C'.E)_{Q'}+(l-1)(C'.E)_{Q'}\leq d+(m-1)(C'.E)_{Q'}
\end{equation*}
and \eqref{eqint} holds.  If $Q'$ is in the support of $D'$, then since distinct lines intersect transversally, there is a unique line $L_i$, $i\in \{1,\ldots, q\}$, such that the strict transform $L_i'$ contains $Q'$.   We have $(C'.\pi^*L_i)_{Q'}\leq d$.  Since $Q'\not\in L_j'$, $j\neq i$, we compute 
\begin{equation*}
(C'.\pi^*D)_{Q'}=(C'.\pi^* L_i)_{Q'}+(l-1)(C'.E)_{Q'}\leq d+(m-1)(C'.E)_{Q'}
\end{equation*}
and \eqref{eqint} again holds.

Let $\mu=\max\{\mu_Q(C)\mid Q\in C(\kbar)\}$.  Let $P\in C(\kbar)$ be a point with $\mu_P(C)=\mu$.  Let $\mu'=\max\{\mu_Q(C)\mid Q\in C(\kbar)\setminus \{P\}\}$ (we may have $\mu'=\mu$).  Then from \eqref{bleq}, \eqref{eqint}, and \eqref{iexc}, if $\phi(\tilde{Q})=Q\neq P$, then
\begin{equation*}
\ord_{\tilde{Q}}(\phi^*D)\leq d+(m-1)\mu'.
\end{equation*}
For points $\tilde{Q}$ with $\phi(\tilde{Q})=P$ we have
\begin{align*}
\sum_{\substack{\tilde{Q}\in C\\\phi(\tilde{Q})=P}}\max\{\ord_{\tilde{Q}}(\phi^*D)-d,0\}&=\sum_{\substack{Q'\in C'\\\pi(Q')=P}}\sum_{\substack{\tilde{Q}\in C\\\psi(\tilde{Q})=Q'}}\max\{\ord_{\tilde{Q}}(\phi^*D)-d,0\}\\
&\leq \sum_{\substack{Q'\in C'\\\pi(Q')=P}}(m-1)(C'.E)_{Q'}\leq (m-1)(C.E)\\
&\leq (m-1)\mu.
\end{align*}

It follows that we may write
\begin{equation*}
\phi^*D\leq (d+(m-1)\mu')F+G,
\end{equation*}
where $F$ is a reduced effective divisor on $\tilde{C}$ and $G$ is an effective divisor on $\tilde{C}$ satisfying
\begin{equation*}
\deg G\leq (m-1) (\mu-\mu').
\end{equation*}
Since $C$ is not a line, $\mu_Q(C)+\mu_{Q'}(C)\leq d$ for any distinct $Q,Q'\in C$ (look at the intersection number with the line through $Q$ and $Q'$).  So $\mu+\mu'\leq d$ and $\deg G\leq (m-1)(d-2\mu')$.  Let $A$ be a point on $\tilde{C}$.  By Theorem \ref{corV},
\begin{equation*}
m_{F,S}(P)<(2\delta+\epsilon)h_A(P)+O(1)
\end{equation*}
for all $P\in \tilde{C}(\kbar)\setminus \Supp F$ satisfying $[k(P):k]\leq \delta$.  We also have the bound
\begin{equation*}
m_{G,S}(P)<(\deg G+\epsilon)h_A(P)+O(1)
\end{equation*}
for all $P\in \tilde{C}(\kbar)\setminus \Supp G$.  Furthermore, $h(\phi(P))\geq (d-\epsilon)h_A(P)+O(1)$.
So
\begin{align*}
m_{\phi^*D,S}(P)&\leq (d+(m-1)\mu')m_{F,S}(P)+m_{G,S}(P)+O(1)\\
&\leq \left((d+(m-1)\mu')(2\delta)+(m-1)(d-2\mu')+\epsilon\right)h_A(P)+O(1)\\
&\leq \left(2d\delta+(m-1)((2\delta-2)\mu'+d)+\epsilon\right)h_A(P)+O(1)
\end{align*}
for all $P\in \tilde{C}(\kbar)\setminus \Supp \phi^*D$ satisfying $[k(P):k]\leq \delta$.
Since $\mu'\leq \frac{d}{2}$,
\begin{align*}
m_{D,S}(\phi(P))&=m_{\phi^*D,S}(P)+O(1)\leq (2d\delta+(m-1)d\delta+\epsilon)h_A(P)+O(1)\\
&\leq ((m+1)\delta d+\epsilon)h_A(P)+O(1)\leq ((m+1)\delta+\epsilon')h(\phi(P))+O(1),
\end{align*}
for all $P\in \tilde{C}(\kbar)\setminus \Supp \phi^*D$ satisfying $[k(P):k]\leq \delta$, where $\epsilon'\to 0$ as $\epsilon\to 0$.
\end{proof}

We now prove the main result of this section.

\begin{theorem}
\label{thmain}
Let $S$ be a finite set of places of a number field $k$.  Let $L_1,\ldots, L_q$ be lines over $k$ in $\mathbb{P}^2$ in general position.  Let $D=\sum_{i=1}^qL_i$ and let $\epsilon>0$.
\begin{enumerate}
\item  There exists a finite union of lines $Z\subset \mathbb{P}^2$ such that the inequality
\begin{equation}
\label{eqq1}
m_{D,S}(P)=\sum_{v\in S}\sum_{\substack{w\in M_{k(P)}\\w\mid v}} \sum_{i=1}^q\lambda_{L_i,w}(P)<\left(\frac{15}{2}+\epsilon\right)h(P)+O(1)
\end{equation}
holds for all $P\in \mathbb{P}^2(\kbar)\setminus Z$ satisfying $[k(P):k]\leq 2$.
\item  The inequality
\begin{equation}
\label{eqq2}
m_{D,S}(P)=\sum_{v\in S}\sum_{\substack{w\in M_{k(P)}\\w\mid v}} \sum_{i=1}^q\lambda_{L_i,w}(P)<(8+\epsilon)h(P)+O(1)
\end{equation}
holds for all $P\in \mathbb{P}^2(\kbar)\setminus \Supp D$ satisfying $[k(P):k]\leq 2$.
\end{enumerate}
\end{theorem}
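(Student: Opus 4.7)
\textbf{Proof proposal for Theorem \ref{thmain}.}
The plan is to follow the strategy indicated in the introduction: reduce to an approximation problem for $k$-rational points on the symmetric square $Y=\Sym^2\mathbb{P}^2$, and then apply the Subspace Theorem machinery built up in Sections \ref{snum}--\ref{sbound}, handling the resulting exceptional subvarieties of $Y$ one at a time.

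First, I would carry out the reduction to $Y$ exactly as in the proof of Theorem \ref{compth}. Let $\phi:(\mathbb{P}^2)^2\to Y$ be the quotient by $S_2$, and for $P\in\mathbb{P}^2(\kbar)$ with $[k(P):k]=2$ and Galois conjugate $P'$, set $Q=\phi(P,P')\in Y(k)$. Define the effective divisors $E_i=\phi_*\pi_1^*L_i$ on $Y$ together with the ample class $B=\phi_*\pi_1^*H$ for $H$ a line. The computations in Section \ref{slog} give $\sum_i m_{E_i,S}(Q)=2\sum_i m_{L_i,S}(P)+O(1)$ and $h_B(Q)=2h(P)+O(1)$, so (b) would follow from a bound $\sum_i m_{E_i,S}(Q)<(8+\epsilon)h_B(Q)+O(1)$ valid for all $Q\in Y(k)$ off the support of $\sum E_i$, and (a) from the same bound with constant $\tfrac{15}{2}$ valid off a Zariski-closed subset $Z_Y\subset Y$ which is the preimage of a finite union of lines in $\mathbb{P}^2$.

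Next I would assemble the divisor data on $Y$. The variety $Y$ is $4$-dimensional, normal (with quotient singularities along the image $\bar\Delta$ of the diagonal), and the $E_i$ are ample, numerically equivalent to $B$; taking a multiple makes everything Cartier, so Theorem \ref{mev} applies. A direct dimension count, using the fact that no three lines in general position in $\mathbb{P}^2$ are concurrent, shows that $\codim\bigcap_{i\in I}E_i\geq|I|$ for $|I|\leq 4$ and $\bigcap_{i\in I}E_i=\emptyset$ for $|I|=5$, i.e.\ the $E_i$ are in general position on $Y$. Applying Theorem \ref{mev} (or the more flexible Theorem \ref{degen} once subvarieties enter the picture) then yields a bound $\sum_i m_{E_i,S}(Q)<(5+\epsilon)h_B(Q)+O(1)$ on a Zariski-open subset of $Y$, the conjectural optimum. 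The actual constants $8$ and $\tfrac{15}{2}$ will be forced on us when we examine the exceptional set.

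The core of the argument is therefore a case analysis of the irreducible components $W$ of the exceptional Zariski-closed subset produced by the Subspace Theorem, together with the images under $\phi\circ\psi$ of any quadratic points forced onto such components. For $\dim W=0$ there is nothing to prove; for $\dim W=1$ the curve $W$ comes from a curve in $(\mathbb{P}^2)^2$, and I would push the approximation problem back to $\mathbb{P}^2$ and invoke Theorem \ref{thl} for $\delta=2$, checking that the resulting bounds $4m$ or $3(m+1)$ with $m=3$ stay under $8$ (respectively $\tfrac{15}{2}$). The genuinely delicate case is $\dim W=2$ or $3$: here one restricts $E_i$ and $B$ to $W$ and the restricted divisors typically share components (for instance on the diagonal $\bar\Delta$, or on surfaces of the form $W_L=\phi(L\times\mathbb{P}^2)$ where $L$ is one of the $L_i$), so they are \emph{not} in general position. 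I would apply the Corvaja--Zannier style filtration arguments of Section \ref{sbound} (Lemmas \ref{RR}, \ref{fil}, \ref{lhyp}), building sections of large multiples of $B|_W$ whose divisors detect $E_i|_W$ with multiplicity, and then invoke Theorem \ref{Ev} on the resulting embedding; cases where $W$ maps to a line in $\mathbb{P}^2$ under one of the factor projections can be treated by the curve result Theorem \ref{thl} applied in the other factor. Running the bookkeeping across all cases gives the constant $8$ unconditionally and $\tfrac{15}{2}$ once finitely many "bad" surfaces $W_L$ (corresponding to finitely many lines on $\mathbb{P}^2$) are thrown into the exceptional set.

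I expect the main obstacle to be precisely this last step: classifying the two-dimensional exceptional subvarieties of $Y$ on which the $E_i|_W$ fail to be in general position, and squeezing the Corvaja--Zannier filtration argument (with suitable control on $\lcm$ of divisors in the style of Lemma \ref{lhyp}) to produce the sharp constant $8$ rather than something worse. The gap between the conjectural $5$ and the proven $8$ lives entirely in how efficiently one can handle these degenerate surfaces.
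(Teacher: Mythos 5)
Your high-level scaffold matches the paper's: reduce to $k$-rational points on $\Sym^2\mathbb{P}^2$, apply Subspace-Theorem machinery, and then descend into the exceptional set. You also correctly compute that the $E_i$ are in general position on the fourfold $Y$ and that $\dim Y=4$. However, the core of the argument is missing, and the pieces you propose to substitute would not produce the constant $\tfrac{15}{2}$.

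The decisive step in the paper is not a filtration argument. It is the explicit embedding $\phi:\Sym^2\mathbb{P}^2\hookrightarrow\mathbb{P}^5$ given by $|L^{(2)}|$, together with Lemma~\ref{lemcon} (which shows that for lines in general position, any five of the corresponding hyperplanes $H_i\subset\mathbb{P}^5$ meet in a single point) and Lemma~\ref{5lem}. Lemma~\ref{5lem} produces the bound $\tfrac{15}{2}+\epsilon$ for all $P\in\mathbb{P}^5(k)$ outside a finite union of three-dimensional linear subspaces, and its proof is a combinatorial averaging trick: restrict to a generic $\mathbb{P}^4\subset\mathbb{P}^5$, observe that among any four restricted hyperplanes at most one triple can be degenerate, and sum the Subspace Theorem over the three (or four) non-degenerate triples to divide out the excess. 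Nothing in your proposal generates $\tfrac{15}{2}$: Theorem~\ref{mev} off a Zariski-closed set gives $5+\epsilon$ but with an uncontrolled exceptional set, and Theorem~\ref{degen} on a surface with $m=4$, $n=2$ gives $\tfrac{4\cdot3\cdot3}{4}=9$, which already exceeds $\tfrac{15}{2}$ before one even accounts for shared components. Your plan to ``build sections of large multiples of $B|_W$'' and ``invoke Theorem~\ref{Ev}'' is the technique of Lemmas~\ref{RR}--\ref{lhyp}, which the paper does use (via inequality~\eqref{fundineq}) only inside the surface case, and only after the $\mathbb{P}^5$ averaging has already reduced the problem to surfaces in the preimage of codimension-two linear subspaces.

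Second, the surface case is not ``bookkeeping.'' The paper runs a genuine classification: Lemma~\ref{lc} handles surfaces where no three of the $D_{i,v}$ share a component (constant $\tfrac{36}{5}$), and the remaining surfaces are written down as components of $f=g=0$ with $f,g$ of a specific shape, then treated in three cases, in two of which one identifies the surface as a blow-up of $\mathbb{P}^2$ at one or three points and tracks exactly which of the $\psi^*D_i$ share exceptional components, using \eqref{fundineq} plus $\gcd$/$\lcm$ estimates to land under $\tfrac{15}{2}$. Your proposal gestures at ``surfaces of the form $\phi(L\times\mathbb{P}^2)$'' but these are contained in $\Supp L^{(2)}$ and are irrelevant; the surfaces that actually arise are more subtle. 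Finally, a small point: the paper derives part (b) immediately from part (a) plus Theorem~\ref{thl}(a) (which gives $2\delta m=8$ on any curve), rather than by proving an unconditional bound on $\Sym^2\mathbb{P}^2$ as you suggest; your route would require extra work to justify.
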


It follows easily from Wirsing's theorem that the set $Z$ in part (a) may be taken to consist of the lines $L_1,\ldots, L_q$ together with the finite set of lines in $\mathbb{P}^2$ that pass through four distinct intersection points of the lines $L_1,\ldots, L_q$.

\begin{proof}
We first note that part (b) follows from part (a) and Theorem \ref{thl}(a).

We now prove part (a).  Let $\Sym^2\mathbb{P}^2$ denote the symmetric square of $\mathbb{P}^2$.  We have a natural map $\psi:\mathbb{P}^2\times\mathbb{P}^2\to \Sym^2\mathbb{P}^2$.  Let $\pi_i:\mathbb{P}^2\times\mathbb{P}^2\to\mathbb{P}^2$, $i=1,2$, denote the projection on to the $i$th factor.  For a line $L\subset \mathbb{P}^2$, let $L^{(2)}$ denote the divisor $\psi_*\pi_1^*L$ on $\Sym^2\mathbb{P}^2$.  Let $D^{(2)}=\sum_{i=1}^qL^{(2)}_i$, a divisor on $\Sym^2\mathbb{P}^2$.  By the same proof as for the inequality $C'(\delta, l,m,n)\leq B'(l,\delta m, \delta n)$ in Theorem \ref{compth}, to prove part (a) it suffices to prove the inequality
\begin{equation*}
m_{D^{(2)},S}(P)<\left(\frac{15}{2}+\epsilon\right)h_{L^{(2)}}(P)+O(1)
\end{equation*}
for all $P\in \Sym^2\mathbb{P}^2(k)\setminus (Y\cup \Supp D^{(2)})$ for some finite union of curves $Y$, where $L$ is any line in $\mathbb{P}^2$.  Actually, this only gives part (a) with $Z$ a finite union of curves, but the additional fact that $Z$ can be taken to consist only of lines follows from Theorem \ref{thl}(b) as $15/2\geq 6$.  Since at most four divisors $L^{(2)}_i$ meet at a point, any point $P\in \Sym^2\mathbb{P}^2(k)$ can be $v$-adically close to at most four divisors $L^{(2)}_i$.  It follows that for any point $P\in \Sym^2\mathbb{P}^2(k)\setminus \Supp D^{(2)}$, for some choice of divisors $L_{i,v}\in \{L_1,\ldots,L_q\}$ (depending on $P$) we have
\begin{equation*}
m_{D^{(2)},S}(P)\leq \sum_{v\in S}\sum_{i=1}^4 \lambda_{L^{(2)}_{i,v},v}(P)+O(1).
\end{equation*}
Thus, it suffices to show that for any choice of $L_{i,v}$ ($L_{1,v},\ldots,L_{4,v}$ distinct for fixed $v\in S$),
\begin{equation}
\label{mineq}
\sum_{v\in S}\sum_{i=1}^4 \lambda_{L_{i,v}^{(2)},v}(P)<\left(\frac{15}{2}+\epsilon\right)h_{L^{(2)}}(P)+O(1)
\end{equation}
for all $P\in \Sym^2\mathbb{P}^2(k)\setminus (Y\cup \Supp D^{(2)})$ for some finite union of curves $Y$.

%This follows, in fact, immediately from Evertse and Ferreti's Theorem \ref{Ev} (with $15/2$ replaced by $5$).  However, we will want to control the form of $W$, and so use a different technique.  

We first show that \eqref{mineq} holds outside a set $W$, of a certain form, with $\dim W\leq 2$.  To accomplish this we will use the Subspace Theorem applied appropriately to a certain embedding of $\Sym^2\mathbb{P}^2$ in $\mathbb{P}^5$.  Specifically, consider $\mathbb{P}^2\times\mathbb{P}^2$ with coordinates $(x,y,z)\times (\tilde{x},\tilde{y},\tilde{z})$ and the morphism $\mathbb{P}^2\times\mathbb{P}^2\to \mathbb{P}^5$ given by $(x,y,z)\times (\tilde{x},\tilde{y},\tilde{z})\mapsto (x\tilde{x},y\tilde{y},z\tilde{z},x\tilde{y}+\tilde{x}y,x\tilde{z}+\tilde{x}z,y\tilde{z}+\tilde{y}z)$.  This induces an embedding $\phi:\Sym^2\mathbb{P}^2\hookrightarrow\mathbb{P}^5$.  In fact, this is just the embedding associated to the linear system $|L^{(2)}|$.

We will need the following elementary lemma.
\begin{lemma}
\label{lemcon}
Let $l_1,\ldots, l_6$ be linear forms in $k[x,y,z]$.  Let $P_i\in \mathbb{P}^2$, $i=1,\ldots,6$, denote the point dual to $l_i$ ($ax+by+cz\mapsto (a,b,c)\in \mathbb{P}^2$).  The polynomials $l_i(x,y,z)l_i(\tilde{x},\tilde{y},\tilde{z})$, $i=1,\ldots,6$, are linearly dependent over $k$ if and only if the points $P_1,\ldots,P_6$ lie on some (possibly reducible) conic in $\mathbb{P}^2$.  In particular, for any five distinct linear forms $l_1,\ldots,l_5$, such that no four of the corresponding lines $L_1,\ldots,L_5\subset \mathbb{P}^2$ meet at a point, the polynomials $l_i(x,y,z)l_i(\tilde{x},\tilde{y},\tilde{z})$, $i=1,\ldots,5$, are linearly independent.
\end{lemma}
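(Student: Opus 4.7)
The plan is to linearize the dependence question by expanding $l_i(x,y,z)\,l_i(\tilde x,\tilde y,\tilde z)$ in the natural six-dimensional basis of symmetric bilinear forms, and then interpret the result via the classical correspondence between points and conics in $\mathbb{P}^2$.

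First, writing $l_i = a_i x + b_i y + c_i z$, I would multiply out directly to get
\begin{equation*}
l_i(x,y,z)\,l_i(\tilde x,\tilde y,\tilde z) = a_i^2(x\tilde x) + b_i^2(y\tilde y) + c_i^2(z\tilde z) + a_ib_i(x\tilde y + y\tilde x) + a_ic_i(x\tilde z + z\tilde x) + b_ic_i(y\tilde z + z\tilde y).
\end{equation*}
The six monomials on the right are precisely the coordinate functions defining the embedding $\phi:\Sym^2\mathbb{P}^2\hookrightarrow \mathbb{P}^5$, so they are linearly independent in $k[x,y,z,\tilde x,\tilde y,\tilde z]$. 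Hence linear dependence of the six polynomials $l_i\,l_i(\tilde\cdot)$ is equivalent to linear dependence of the coefficient vectors $v_i = (a_i^2, b_i^2, c_i^2, a_ib_i, a_ic_i, b_ic_i)\in k^6$, i.e., to vanishing of $\det(v_1,\ldots,v_6)$.

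Next, identifying $k^6$ with the space of ternary quadratic forms via $(\alpha,\beta,\gamma,\delta,\epsilon,\zeta)\leftrightarrow Q = \alpha X^2+\beta Y^2+\gamma Z^2+\delta XY+\epsilon XZ+\zeta YZ$, I would observe the key identity $Q(P_i) = v_i\cdot(\alpha,\beta,\gamma,\delta,\epsilon,\zeta)$. Thus the $v_i$ are dependent iff there is a nonzero $(\alpha,\ldots,\zeta)$ annihilated by all of them, which is exactly the condition that some (possibly reducible) conic passes through $P_1,\ldots,P_6$. This establishes the main equivalence.

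For the \textquotedblleft in particular\textquotedblright\ assertion, I would first dualize the hypothesis: four lines $L_{i_1},\ldots,L_{i_4}$ are concurrent at a point $Q$ if and only if the four dual points $P_{i_j}$ lie on the line of $\mathbb{P}^2$ dual to $Q$. So the hypothesis becomes: no four of $P_1,\ldots,P_5$ are collinear. It then suffices to prove that such five points impose five independent conditions on conics, or equivalently that the conic through them is unique. A short case analysis finishes this: if no three $P_i$ are collinear, classical uniqueness of a conic through five points in general position applies; if exactly three, say $P_1,P_2,P_3$, lie on a line $L$ and $P_4,P_5$ lie off $L$, then any conic through $P_1,P_2,P_3$ meets $L$ in three points and must contain $L$ as a component by B\'ezout, forcing the conic to be $L \cup \overline{P_4P_5}$, which is unique. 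I do not anticipate any real obstacle; the only mildly delicate point is verifying that the \textquotedblleft exactly three collinear\textquotedblright\ case truly produces a unique conic, and this is precisely where the hypothesis \textquotedblleft no four collinear\textquotedblright\ is used to pin down the second component of the reducible conic.
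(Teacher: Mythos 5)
Your proof is correct and follows essentially the same route as the paper: the same expansion into the six monomials $x\tilde x,\ldots,y\tilde z+\tilde y z$, the same identification of the coefficient vectors $v_i$, and the same observation that a vector in the nullspace of the associated matrix is exactly a conic through the $P_i$. The one small variation is in the \textquotedblleft in particular\textquotedblright\ part: the paper proves linear independence of the five polynomials by observing that a sixth point $P_6$ off the unique conic through $P_1,\ldots,P_5$ makes all six polynomials independent, whereas you argue directly via rank--nullity that independence of the five $v_i$ is equivalent to the five points imposing independent conditions on conics, which is the uniqueness of the conic; moreover you supply a short case analysis justifying that uniqueness (which the paper merely asserts). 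Both are minor and the approaches are substantially identical.
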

\begin{proof}
Let $l_i(x,y,z)=a_ix+b_iy+c_iz$.  Expanding each polynomial $l_i(x,y,z)l_i(\tilde{x},\tilde{y},\tilde{z})$ and writing down the coefficients of $x\tilde{x},y\tilde{y},z\tilde{z},x\tilde{y}+\tilde{x}y,x\tilde{z}+\tilde{x}z,y\tilde{z}+\tilde{y}z$, we obtain an associated matrix $M$ with $ith$ row given by $(a_i^2,b_i^2,c_i^2,a_ib_i,a_ic_i,b_ic_i)$.  The polynomials $l_i(x,y,z)l_i(\tilde{x},\tilde{y},\tilde{z})$, $i=1,\ldots,6$, are linearly dependent over $k$ if and only if $M$ has a nontrivial nullspace.  Now we note that the vector $(d_1,\ldots,d_6)^{T}$ is in the right null-space if and only if the conic $d_1x^2+d_2y^2+d_3z^2+d_4xy+d_5xz+d_6yz$ contains the six points $P_1,\ldots,P_6$.

For the last statement of the lemma, note that the assumption implies that no four of the dual points $P_1,\ldots, P_5$ are collinear.  Since in this case there is a unique conic through $P_1,\ldots, P_5$, there is a sixth point $P_6$ such that $P_1,\ldots,P_6$ do not lie on any conic.  It follows that $l_i(x,y,z)l_i(\tilde{x},\tilde{y},\tilde{z})$, $i=1,\ldots,5$, must be linearly independent.
\end{proof}

We now show that \eqref{mineq} holds outside a set $W=\phi^{-1}(W')$, where $W'\subset \mathbb{P}^5$ is a finite union of linear spaces of dimension three.  This will be a consequence of the following lemma.

\begin{lemma}
\label{5lem}
Let $H_1,\ldots, H_q$ be hyperplanes in $\mathbb{P}^5$, defined over $k$, such that the intersection of any five of the hyperplanes is a point.  For each $v\in S$, let $H_{1,v},\ldots, H_{4,v}\subset \{H_1,\ldots, H_q\}$ be four distinct hyperplanes.  Let $\epsilon>0$.  Then
\begin{equation*}
\sum_{v\in S}\sum_{i=1}^4 \lambda_{H_{i,v},v}(P)<\left(\frac{15}{2}+\epsilon\right)h(P)+O(1)
\end{equation*}
for all $P\in \mathbb{P}^5(k)\setminus W'$, where $W'\subset \mathbb{P}^5$ is a finite union of linear spaces of dimension three.
\end{lemma}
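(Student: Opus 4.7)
The plan is to apply the Subspace Theorem in $\mathbb{P}^5$ to the four hyperplanes per $v$ and then refine the resulting exceptional set from a finite union of hyperplanes (codimension one) to a finite union of $3$-dimensional linear subspaces (codimension two), at the cost of weakening the constant from $6$ to $15/2$.

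First, note that the hypothesis that any five of $H_1,\ldots,H_q$ meet in a point forces any four of them to be linearly independent, hence in general position in $\mathbb{P}^5$. In particular, for each $v\in S$ the four chosen hyperplanes $H_{1,v},\ldots,H_{4,v}$ are in general position, and a direct application of the Subspace Theorem (in Vojta's form stated in the introduction) yields
\[
\sum_{v\in S}\sum_{i=1}^4\lambda_{H_{i,v},v}(P)\leq (6+\epsilon)h(P)+O(1)
\]
for all $P\in\mathbb{P}^5(k)$ outside a finite union of hyperplanes $E_1,\ldots,E_r\subset\mathbb{P}^5$ depending only on $\{H_1,\ldots,H_q\}$. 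Since $6<15/2$, the desired inequality already holds on the complement of this finite union of hyperplanes.

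To push the exceptional set down from hyperplanes to $3$-dimensional linear subspaces, the plan is to analyze each $E_j\cong \mathbb{P}^4$ separately by restricting the configuration and applying the Subspace Theorem (or a suitable projection) on $E_j$. For a ``generic'' $E_j$ that contains none of the finitely many four-fold intersection lines $L_v=\cap_{i=1}^4 H_{i,v}$, the restricted hyperplanes $H_{i,v}\cap E_j$ remain in general position in $\mathbb{P}^4$, and the Subspace Theorem on $E_j$ gives a bound $(5+\epsilon)h(P)$ outside a finite union of $3$-dimensional linear subspaces (the hyperplanes of $E_j$), which is stronger than required. For a ``degenerate'' $E_j$ that contains some $L_{v_0}$, the four restricted hyperplanes at $v_0$ all contain $L_{v_0}$ and hence fail to be in general position on $E_j$. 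The plan here is to project $\pi\colon E_j\setminus L_{v_0}\to \mathbb{P}^2$ from $L_{v_0}$; the four hyperplanes at $v_0$ project to four lines in general position in $\mathbb{P}^2$, and the Subspace Theorem on $\mathbb{P}^2$ applied at the single place $v_0$ supplies the missing bound. Each exceptional line in $\mathbb{P}^2$ pulls back under $\pi$ to a $3$-dimensional linear subspace of $\mathbb{P}^5$ containing $L_{v_0}$, so the cumulative exceptional set on $E_j$ remains a finite union of $3$-dimensional linear subspaces.

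The hard part is the combinatorial accounting in the degenerate cases: interleaving the $\mathbb{P}^4$ Subspace Theorem on $E_j$ for the ``generic'' places with the $\mathbb{P}^2$ Subspace Theorem via projection for each ``degenerate'' place, and ensuring that the resulting total constant comes out to $15/2$, uniformly in the combinatorial structure of the degeneracies. This should be achieved by using only a maximal general-position subset of the four hyperplanes at each degenerate place inside the $\mathbb{P}^4$ application, separately controlling the omitted Weil functions via the projection, and invoking Vojta's refinement throughout so that the exceptional set depends only on $\{H_1,\ldots,H_q\}$ and not on $\epsilon$ or $P$. The combined exceptional set $W'\subset\mathbb{P}^5$ is thereby a finite union of $3$-dimensional linear subspaces, as required.
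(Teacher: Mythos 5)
Your proposal begins the same way as the paper's proof: apply the Subspace Theorem in $\mathbb{P}^5$ to get the bound $(6+\epsilon)h(P)$ outside a finite union of hyperplanes, then restrict to a hyperplane $E_j\cong\mathbb{P}^4$. You also correctly identify that the four restricted hyperplanes at a place $v$ are in general position in $E_j\cong\mathbb{P}^4$ exactly when $E_j$ does not contain the line $L_v=\cap_{i=1}^4 H_{i,v}$. From there, however, your route (projecting from $L_{v_0}$ at each degenerate place) diverges from the paper's, and it has genuine gaps.

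First, the assertion that the four hyperplanes at $v_0$ project to ``four lines in general position in $\mathbb{P}^2$'' is not correct in general. Three of the four restricted hyperplanes in $E_j$ may meet along a $2$-plane strictly larger than $L_{v_0}$, in which case the corresponding projected lines are concurrent. The combinatorial fact established in the paper's Case I is precisely that at most one $3$-element subset of the four can be degenerate in this way — but it can happen, so the projected configuration need not be in general position.

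Second, and more seriously, the projection does not carry the Subspace Theorem constraint back up. Writing $\pi\colon E_j\setminus L_{v_0}\to\mathbb{P}^2$ and $H'=\pi^*(l)$ for a hyperplane $H'\supset L_{v_0}$, one has
\begin{equation*}
\lambda_{H',v_0}(P)=\lambda_{l,v_0}(\pi(P))+c_{v_0}(P),\qquad h(P)\ge h(\pi(P))+c_{v_0}(P),
\end{equation*}
where $c_{v_0}(P)\ge 0$ measures $v_0$-adic proximity of $P$ to $L_{v_0}$. Since $\lambda_{l,v_0}(\pi(P))\le h(\pi(P))+O(1)$ anyway, the projection just returns the trivial estimate $\lambda_{H',v_0}(P)\le h(P)+O(1)$; and applying Schmidt downstairs to $\sum_i\lambda_{l_i,v_0}$ does not help, because adding back the four copies of $c_{v_0}(P)$ can drive the sum up to nearly $4h(P)$. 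In short, the projection ``supplies the missing bound'' only in the sense of the trivial bound.

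Third, the accounting you explicitly defer is where the approach breaks. If you run the $\mathbb{P}^4$ Subspace Theorem using a maximal general-position subset at each bad place and handle the omitted Weil functions separately, you get $(5+\epsilon)h(P)$ from the first part plus (via the trivial bound above) roughly one extra $h(P)$ for each bad place. Since a single $E_j$ can contain several of the lines $L_v$, and the number of such $v\in S$ is not bounded by any absolute constant, this does not give $15/2$. You also do not address the case in which two of the $H_i$ restrict to the same hyperplane in $E_j$ (the paper's Case II).

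The paper sidesteps all of this with a uniform combinatorial trick: for any four distinct restricted hyperplanes $H_{1,v}',\ldots,H_{4,v}'$ in $\mathbb{P}^4$, at most one $3$-element subset fails to be in general position. Ordering so that the bad subset, if any, is $\{1,2,3\}$, it applies the Subspace Theorem three times, once for each $3$-element subset $\{1,2,3,4\}\setminus\{j\}$ with $j=1,2,3$. Each Weil function appears at least twice among the three applications, so summing the three bounds $(5+\epsilon)h(P)$ and dividing by $2$ gives $\left(\tfrac{15}{2}+\epsilon\right)h(P)$, with no projections and no distinction between good and bad places. The coinciding-restrictions case is handled analogously with four applications, giving the even better constant $\tfrac{22}{3}$.
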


\begin{proof}
From Schmidt's theorem, it is immediate that
\begin{equation*}
\sum_{v\in S}\sum_{i=1}^4 \lambda_{H_{i,v},v}(P)<\left(6+\epsilon\right)h(P)+O(1)
\end{equation*}
for all $P\in \mathbb{P}^5(k)$ outside a finite union of hyperplanes over $k$.  Let $X\cong \mathbb{P}^4\subset \mathbb{P}^5$ be a hyperplane in $\mathbb{P}^5$ not equal to any $H_i$, $i=1,\ldots, q$.  We will identify $X$ with $\mathbb{P}^4$.  Then it suffices to show that the inequality
\begin{equation*}
\sum_{v\in S}\sum_{i=1}^4 \lambda_{H_{i,v}|_X,v}(P)<\left(\frac{15}{2}+\epsilon\right)h(P)+O(1)
\end{equation*}
holds for all $P\in \mathbb{P}^4(k)$ outside a finite union of hyperplanes in $\mathbb{P}^4\cong X$.  This inequality follows trivially if $q\leq 7$, so we may suppose that $q>7$.  Let $H_i'=H_i|_X$.  We consider two cases:\\\\
Case I:  All of the hyperplanes $H_i'$ are distinct.\\

Let $r,s,t,u$ be distinct elements of $\{1,\ldots, q\}$.  Then we first claim that there is at most one three-element subset $I\subset \{r,s,t,u\}$ such that $\dim_{i\in I} H_i'\neq 1$.  On the contrary, suppose that, without loss of generality, $\dim H_r'\cap H_s'\cap H_t'>1$ and $\dim H_r'\cap H_s'\cap H_u'>1$.  Since $H_r'\neq H_s'$ by assumption, $\dim H_r'\cap H_s'=2$.  Thus, $H_t'\supset (H_r'\cap H_s')$ and $H_u'\supset (H_r'\cap H_s')$.  But then this implies that $\dim H_r'\cap H_s'\cap H_t'\cap H_u'=2$, which contradicts the assumption that the intersection of any five hyperplanes in $\{H_1,\ldots, H_q\}$ is a point.

For $v\in S$, if the hyperplanes $H_{1,v}', H_{2,v}', H_{3,v}', H_{4,v}'$ are not in general position, then we assume that the hyperplanes are ordered so that $\dim \cap_{i=1}^3 H_{i,v}'=2$.  Note that for $j=1,2,3$, the hyperplanes $H_{i,v}'$, $i\in \{1,2,3,4\}\setminus \{j\}$, are in general position.  Then using the Subspace Theorem for $\mathbb{P}^4$ three times, we find

\begin{equation*}
\sum_{j=1}^3\sum_{v\in S}\sum_{i\in \{1,2,3,4\}\setminus \{j\}} \lambda_{H_{i,v}',v}(P)<\left(15+\epsilon\right)h(P)+O(1)
\end{equation*}
for all $P\in \mathbb{P}^4(k)$ outside a finite union of hyperplanes.  Since for $P\in \mathbb{P}^4(k)\setminus \cup_{i=1}^qH_i'$,
\begin{equation*}
\sum_{j=1}^3\sum_{v\in S}\sum_{i\in \{1,2,3,4\}\setminus \{j\}} \lambda_{H_{i,v}',v}(P)> 2\sum_{v\in S}\sum_{i=1}^4 \lambda_{H_{i,v}',v}(P),
\end{equation*}
we obtain

\begin{equation*}
\sum_{v\in S}\sum_{i=1}^4 \lambda_{H_{i,v}',v}(P)<\left(\frac{15}{2}+\epsilon\right)h(P)+O(1)
\end{equation*}
for all $P\in \mathbb{P}^4(k)$ outside a finite union of hyperplanes, as desired.\\

\noindent
Case II:  $H_m'=H_n'$ for some $m\neq n$\\

Since any five distinct hyperplanes in $\{H_1,\ldots, H_q\}$ intersect in a point, it follows that for any subset $\{i,j,k\}\subset \{1,\ldots,q\}$ with $\{m,n\}\not\subset\{i,j,k\}$, $H_i',H_j',H_k'$ are in general position.  For $j=1,2,3,4$, define
\begin{equation*}
\mathcal{H}_{j,v}=
\begin{cases}
\{H_{1,v},H_{2,v},H_{3,v},H_{4,v}\}\setminus \{H_{j,v}\} &\text{ if } \{H_m,H_n\}\not\subset\{H_{1,v},H_{2,v},H_{3,v},H_{4,v}\},\\
\{H_{1,v},H_{2,v},H_{3,v},H_{4,v}\}\setminus \{H_n\} &\text{ if } \{H_m,H_n\}\subset\{H_{1,v},H_{2,v},H_{3,v},H_{4,v}\}.
\end{cases}
\end{equation*}

Then using the Schmidt Subspace Theorem four times and summing, we find that
\begin{equation*}
\sum_{j=1}^4\sum_{v\in S}\sum_{H\in \mathcal{H}_{j,v}} \lambda_{H',v}(P)<\left(20+\epsilon\right)h(P)+O(1)
\end{equation*}
for all $P\in \mathbb{P}^4(k)$ outside a finite union of hyperplanes.  From the definitions, for all $P\in \mathbb{P}^4(k)\setminus \cup_{i=1}^qH_i'$,
\begin{align*}
\sum_{j=1}^4\sum_{v\in S}\sum_{H\in \mathcal{H}_{j,v}} \lambda_{H',v}(P)&> 3\sum_{v\in S}\sum_{i=1}^4 \lambda_{H_{i,v}',v}(P)-2\sum_{v\in S}\lambda_{H_n',v}(P)+O(1)\\
&> 3\sum_{v\in S}\sum_{i=1}^4 \lambda_{H_{i,v}',v}(P)-2h(P)+O(1).
\end{align*}
Combining these equations appropriately, we find that
\begin{equation*}
\sum_{v\in S}\sum_{i=1}^4 \lambda_{H_{i,v}',v}(P)<\left(\frac{22}{3}+\epsilon\right)h(P)+O(1)<\left(\frac{15}{2}+\epsilon\right)h(P)+O(1)
\end{equation*}
for all $P\in \mathbb{P}^4(k)$ outside a finite union of hyperplanes.
\end{proof}

A polynomial $f\in \mathbb{Q}[x,y,z,\tilde{x},\tilde{y},\tilde{z}]$ that is homogeneous in the variables $x,y,z$ and the variables $\tilde{x},\tilde{y},\tilde{z}$ and invariant under interchanging $x,y,z$ with $\tilde{x},\tilde{y},\tilde{z}$, naturally defines a closed subset of $\Sym^2\mathbb{P}^2$.  More generally, if a system of polynomial equations defines a closed subset $Z$ of $\mathbb{P}^2\times \mathbb{P}^2$, then on $\Sym^2\mathbb{P}^2$ we will say that the system of polynomial equations defines the closed subset $\psi(Z)$, where $\psi:\mathbb{P}^2\times \mathbb{P}^2\to\Sym^2\mathbb{P}^2$ is the canonical map.  If $L\subset\mathbb{P}^2$ is defined by a linear form $l$, then $L^{(2)}$ is defined by $l(x,y,z)l(\tilde{x},\tilde{y},\tilde{z})$, and $L^{(2)}=\phi^*H$ for an appropriate hyperplane $H\subset \mathbb{P}^5$.  Let $H_1,\ldots, H_q$ be hyperplanes in $\mathbb{P}^5$ such that $L^{(2)}_i=\phi^*H_i$, $i=1,\ldots, q$.  By Lemma \ref{lemcon}, since $L_1,\ldots, L_q$ are in general position, for any five distinct lines $L_{i_1},\ldots, L_{i_5}\subset \{L_1,\ldots,L_q\}$, the corresponding polynomials $l_{i_j}(x,y,z)l_{i_j}(\tilde{x},\tilde{y},\tilde{z})$, $j=1,\ldots, 5$, are linearly independent, or equivalently, any five distinct hyperplanes in $\{H_1,\ldots, H_q\}$ intersect in a single point.  Then by Lemma~\ref{5lem},

\begin{align*}
\sum_{v\in S}\sum_{i=1}^4 \lambda_{L_{i,v}^{(2)},v}(P)&=\sum_{v\in S}\sum_{i=1}^4 \lambda_{H_{i,v},v}(\phi(P))+O(1)\\
&<\left(\frac{15}{2}+\epsilon\right)h(\phi(P))+O(1)\\
&<\left(\frac{15}{2}+\epsilon\right)h_{L^{(2)}}(P)+O(1),
\end{align*}
for all $P\in \Sym^2\mathbb{P}^2(k)\setminus W$, where $W=\phi^{-1}(W')$ and $W'\subset \mathbb{P}^5$ is a finite union of linear spaces of dimension three.

Since it's easily seen that for $W=\phi^{-1}(W')$ as above, $\dim W=2$, we have now reduced the problem to showing that \eqref{mineq} holds outside a finite union of curves for rational points on certain surfaces in $\Sym^2 \mathbb{P}^2$.  More specifically, let $V\subset \Sym^2 \mathbb{P}^2$ be a surface that is an irreducible component of $\phi^{-1}(W')$, where $W'$ is some linear subspace of $\mathbb{P}^5$ of codimension two.  We may assume that $V$ is defined over $k$, since otherwise $V(k)$ will lie in a finite union of curves in $V$.  We also assume that $V$ is not contained in the support of any divisor $L_i^{(2)}$.  Let $D_i=L_i^{(2)}|_V$, $D_{i,v}=L_{i,v}^{(2)}|_V$, and $A=L^{(2)}|_V$.  Then we need to show that
\begin{equation*}
\sum_{v\in S}\sum_{i=1}^4 \lambda_{D_{i,v},v}(P)<\left(\frac{15}{2}+\epsilon\right)h_{A}(P)+O(1)
\end{equation*}
for all $P\in V(k)\setminus Y$ for some finite union of curves $Y$.  We begin by showing that this inequality must hold if not too many of the divisors $D_{i,v}$ share components.
\begin{lemma}
\label{lc}
Suppose that for all $v\in S$, no three of the divisors $D_{i,v}$, $i=1,\ldots, 4$, have a common component.  Let $\epsilon>0$.  Then there exists a finite union of curves $Y$ in $V$ such that
\begin{equation*}
\sum_{v\in S}\sum_{i=1}^4 \lambda_{D_{i,v},v}(P)<\left(\frac{36}{5}+\epsilon\right)h_{A}(P)+O(1)
\end{equation*}
for all $P\in V(k)\setminus Y$.
\end{lemma}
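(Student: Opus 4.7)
The structure of the argument parallels the proof of Theorem \ref{degen}, with the hypothesis on shared components replacing the role of $m$-subgeneral position to yield an improved divisorial inequality. In effect, only the combinatorial step \eqref{meq} of that proof needs to be revisited.

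\emph{Step 1 (pairwise inequality on $V$).} I would first observe that the pairwise inequality \eqref{fundineq} from the proof of Theorem \ref{degen} applies on $V$: its derivation uses only Lemma \ref{lhyp} and Theorem \ref{Ev}, and requires merely that $A$ be ample and that each pair $D_{i,v}, D_{j,v}$ consist of effective divisors numerically equivalent to $A$. These hypotheses hold on $V$ (after passing to the normalization if $V$ is not regular in codimension one, exactly as at the end of the proof of Theorem \ref{degen}). Thus for each pair $i \neq j$ in $\{1,2,3,4\}$ I get
\begin{equation*}
\sum_{v \in S} \lambda_{\lcm(D_{i,v}, D_{j,v}), v}(P) < (3+\epsilon)\, h_A(P) + O(1)
\end{equation*}
for all $P \in V(k)$ outside a proper Zariski-closed subset $Z_{i,j}$; since $\dim V = 2$, $Z_{i,j}$ has dimension at most one.

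\emph{Step 2 (improved divisorial inequality).} I would then prove
\begin{equation*}
\sum_{\substack{i,j=1 \\ i \neq j}}^{4} \lcm(D_{i,v}, D_{j,v}) \;\geq\; 5 \sum_{l=1}^{4} D_{l,v}
\end{equation*}
for each $v \in S$. This is a pointwise check on each prime component $E$: let $m_l = \ord_E D_{l,v}$; by hypothesis at most two of $m_1,\ldots,m_4$ are nonzero. The coefficient of $E$ on the left equals $\sum_{i \neq j} \max(m_i, m_j)$. If exactly one $m_l$ is nonzero, this equals $6 m_l \geq 5 m_l$. If exactly two, say $m_1, m_2$, are nonzero, the left side equals $2\max(m_1, m_2) + 4(m_1 + m_2)$ and the inequality reduces to $\max(m_1,m_2) \geq \min(m_1,m_2)$, which is trivial.

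\emph{Step 3 (combine).} Summing the pairwise inequality of Step 1 over the $12$ ordered pairs $(i,j)$ and applying Step 2 together with additivity of Weil functions and positivity for effective divisors, I obtain
\begin{equation*}
5 \sum_{v \in S}\sum_{l=1}^{4} \lambda_{D_{l,v}, v}(P) \;\leq\; \sum_{v \in S} \sum_{i \neq j} \lambda_{\lcm(D_{i,v}, D_{j,v}), v}(P) + O(1) \;<\; 12\,(3+\epsilon)\, h_A(P) + O(1)
\end{equation*}
on $V(k) \setminus Y$, where $Y = \bigcup_{i \neq j} Z_{i,j} \cup \bigcup_{i,v} \Supp D_{i,v}$ is a finite union of curves. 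Dividing by $5$ yields the desired bound $\frac{36}{5}+\epsilon'$.

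\emph{Main obstacle.} The essential new content beyond Theorem \ref{degen} is the improved divisorial inequality of Step 2: from $3$-subgeneral position alone one only obtains the coefficient $m+n-2 = 4$ via \eqref{meq} (yielding $\frac{36}{4}=9$), while the stronger "no three share a common component" hypothesis upgrades this coefficient to $5$, pushing the resulting bound down to $\frac{36}{5}$. The only mildly delicate point is allowing for possibly non-reduced $D_{i,v}$ in the case analysis, which is why I have phrased Step 2 in terms of the multiplicities $m_l$ rather than just the presence or absence of the component.
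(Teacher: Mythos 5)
Your proposal is correct and follows essentially the same approach as the paper: normalize $V$, apply the pairwise inequality \eqref{fundineq} to each of the twelve ordered pairs, and combine via the divisorial inequality $\sum_{i\neq j}\lcm(D_{i,v},D_{j,v})\geq 5\sum_{l}D_{l,v}$. The only cosmetic difference is that you verify the divisorial inequality by a direct multiplicity check on each prime component, whereas the paper derives it from the identity $\lcm(D_{i,v},D_{j,v})=D_{i,v}+D_{j,v}-\gcd(D_{i,v},D_{j,v})$ together with the bound $\sum_{j\neq i}\gcd(D_{i,v},D_{j,v})\leq D_{i,v}$; these are equivalent computations.
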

\begin{proof}
As in the proof of Theorem \ref{degen}, by using the normalization of $V$ and functoriality, we easily reduce to the case that $V$ is a normal surface.

We have, for $v\in S$,
\begin{equation*}
\sum_{\substack{i,j\\i\neq j}}\lcm(D_{i,v},D_{j,v})=\sum_{\substack{i,j\\i\neq j}}D_{i,v}+D_{j,v}-\gcd(D_{i,v}, D_{j,v})=6\sum_{i=1}^4D_{i,v}-\sum_{\substack{i,j\\i\neq j}}\gcd(D_{i,v}, D_{j,v}).
\end{equation*}
Since no three of the divisors $D_{i,v}$, $i=1,\ldots, 4$, have a common component, it follows that for fixed $i$,
\begin{equation*}
\sum_{\substack{j\\j\neq i}}\gcd(D_{i,v}, D_{j,v})\leq D_{i,v}.
\end{equation*}
So
\begin{equation*}
\sum_{\substack{i,j\\i\neq j}}\gcd(D_{i,v},D_{j,v})\leq \sum_{i=1}^4D_{i,v}.
\end{equation*}
Then
\begin{equation*}
\sum_{\substack{i,j\\i\neq j}}\lcm(D_{i,v},D_{j,v})\geq 5\sum_{i=1}^4 D_{i,v}.
\end{equation*}.

Translating this information into an inequality of Weil functions and using \eqref{fundineq} multiple times, we obtain that for some finite union of curves $Y$,
\begin{equation*}
5\sum_{v\in S}\sum_{i=1}^4 \lambda_{D_{i,v},v}(P)\leq \sum_{v\in S} \sum_{\substack{i,j\\i\neq j}}\lambda_{\lcm(D_{i,v},D_{j,v}),v}(P)<(36+\epsilon)h_A(P)
\end{equation*}
for all $P\in V(k)\setminus Y$.
\end{proof}

By Lemma \ref{lc}, we may assume from now on that there exist three divisors $D_i$ that share a component.  After reindexing and choosing coordinates appropriately, we may assume that the three divisors are $D_1, D_2$, and $D_3$, that $L_1, L_2,$ and $L_3$ are defined by $x=0$, $y=0$, $z=0$, respectively (so $D_1,D_2,D_3$ are defined on $V\subset\Sym^2\mathbb{P}^2$ by $x\tilde{x}=0, y\tilde{y}=0, z\tilde{z}=0$, respectively), and that a common irreducible component of the three divisors is the curve $C\subset\Sym^2\mathbb{P}^2$ defined by the equation $x=y=\tilde{z}=0$.  So $V\subset\phi^{-1}(W')$ is a surface in $\Sym^2\mathbb{P}^2$ containing the curve $C$.  It follows that $V$ is an irreducible component of a closed set $W\subset \Sym^2\mathbb{P}^2$ defined by $f=g=0$ with $f$ and $g$ two polynomials of the form
\begin{align*}
f&=f_1x\tilde{x}+f_2y\tilde{y}+f_3z\tilde{z}+f_4(x\tilde{y}+y\tilde{x}),\\
g&=g_1x\tilde{x}+g_2y\tilde{y}+g_4(x\tilde{y}+y\tilde{x}),
\end{align*}
where $f_i,g_i\in k$, $i=1,2,3,4$.
We consider various cases depending on the form of $f$ and $g$.\\

Case I:  $f_3=0$\\\\

We have
\begin{align*}
f&=(f_1\tilde{x}+f_4\tilde{y})x+(f_4\tilde{x}+f_2\tilde{y})y,\\
g&=(g_1\tilde{x}+g_4\tilde{y})x+(g_4\tilde{x}+g_2\tilde{y})y.
\end{align*}
Let
\begin{equation*}
(f_1x+f_4y)(g_4x+g_2y)-(f_4x+f_2y)(g_1x+g_4y)=l_1(x,y)l_2(x,y).
\end{equation*}
The equation $f=g=0$ implies that either $x=y=0$ or $l_1(\tilde{x},\tilde{y})l_2(\tilde{x},\tilde{y})=0$.  Similarly, it also implies that either $\tilde{x}=\tilde{y}=0$ or $l_1(x,y)l_2(x,y)=0$.  It follows easily that $W$ has two irreducible components.  One of the irreducible components is defined by $x=y=0$ in $\Sym^2\mathbb{P}^2$. The other irreducible component is defined by $l_1(x,y)=l_2(\tilde{x},\tilde{y})=0$ in $\Sym^2\mathbb{P}^2$.  The curve $C:x=y=\tilde{z}=0$ only belongs to the component defined by $x=y=0$.  However, this irreducible component is contained in the support of $L_1^{(2)}$, and hence is excluded from our analysis.\\

Case II: $g=l(x,y)l(\tilde{x},\tilde{y})$ for some linear form $l$ and $f_3\neq 0$\\

Let $W$ be the variety in $\mathbb{P}^2\times \mathbb{P}^2$ defined by $f=l(x,y)=0$.  The map $\psi$ induces a surjective morphism (that we will again denote by $\psi$) $\psi:W\to V$ that is injective outside the closed subset defined by $f=l(x,y)=l(\tilde{x},\tilde{y})=0$.  We will consider $\psi^*D_i=F_i+\tilde{F}_i$, where $F_i=(L_i\times \mathbb{P}^2)\cap W$ and $\tilde{F}_i=(\mathbb{P}^2\times L_i)\cap W$.  Let $L\subset \mathbb{P}^2$ be the line defined by $l(x,y)=0$.  We have two projection maps $\pi_1:W\to L\cong \mathbb{P}^1$ and $\pi_2:W\to \mathbb{P}^2$ induced by the two natural projections on $\mathbb{P}^2\times \mathbb{P}^2$.  

%After extending $k$, and replacing $f$ by a linear combination of $f$ and $g$, we can write the defining equations for $W$ as
%\begin{align*}
%f&=MM'+f_3z\tilde{z},\\
%L&=0,
%\end{align*}
%where $L$ and $M$ are linearly independent.  Let $P\in \mathbb{P}^2$ be the point defined by $z=M=0$.  

Note that $f_1x\tilde{x}+f_2y\tilde{y}+f_4(x\tilde{y}+y\tilde{x})$ is not a multiple of $g$ since this would imply that $V$ is contained in the closed subset of $\Sym^2\mathbb{P}^2$ defined by $z\tilde{z}=0$, i.e., $V\subset \Supp L_3^{(2)}$.  Let $P=(\tilde{x}_0,\tilde{y}_0,0)$ be the unique point in $\mathbb{P}^2$ such that when $f$ is evaluated at $(\tilde{x},\tilde{y},\tilde{z})=(\tilde{x}_0,\tilde{y}_0,0)$, the result is a scalar multiple of $l(x,y)$.  Then $\pi_2$ is an isomorphism above $\mathbb{P}^2\setminus \{P\}$ and $\pi_2^{-1}(P)=L\times \{P\}\cong \mathbb{P}^1$.  In fact, the map $\pi_2$ realizes $W$ as the blow-up of $\mathbb{P}^2$ at $P$.  The map $\pi_1$ gives $W$ as a rational ruled surface.

%In this case, $V$ is isomorphic to the variety defined by $f=L=0$ in $\mathbb{P}^2\times \mathbb{P}^2$.  We have a map $\psi:V\to \mathbb{P}^2$ induced by the projection of $\mathbb{P}^2\times \mathbb{P}^2$ onto the second factor.  It is easy to see that $\psi$ is an isomorphism outside of the point $P\in\mathbb{P}^2$ defined by $z=M=0$.  The in,verse image of $P$ is isomorphic to $\mathbb{P}^1$.  It is therefore easy to see that $\psi$ realizes $V$ as the blow-up of $\mathbb{P}^2$ at $P$.  The projection of $\mathbb{P}^2\times \mathbb{P}^2$ onto the first factor restricts to a map $\eta:V\to \mathbb{P}^1$ giving the structure of a rational ruled surface to $V$.  

Since the lines $L_i$ are in general position, it follows that aside from the line $L_3$ given by $z=0$, there is at most one other line $L_j$ passing through $P$.  We will consider the case where $L_j\neq L_1, L_2$ (the cases $L_j=L_1$ and $L_j=L_2$ are very similar).  In this case, without loss of generality (adding an appropriate line if necessary), we can assume that the line $L_4$ contains the point $P$.

Let $r,s,t,u\in \{1,\ldots,q\}$ be distinct.  Since $L_1,\ldots, L_q$ are in general position, there exists a partition $\{r,s,t,u\}=\{i_1,i_2\}\cup \{i_3,i_4\}$ such that $L_{i_1}\cap L_{i_2}\cap L=\emptyset, L_{i_3}\cap L_{i_4}\cap L=\emptyset$, and $\{i_1,i_2\}, \{i_3,i_4\}\neq \{3,4\}$.  

Suppose first that $\{i_1,i_2\}\cap \{3,4\}=\emptyset$.  Then we claim that $\psi^*D_{i_1}$ and $\psi^*D_{i_2}$ have no components in common.  First, from the definitions it follows that for all $i$, $F_i=\pi_1^*(L\cap L_i)$ and $\tilde{F}_i=\pi_2^*L_i$.  Since $L_{i_1}\cap L_{i_2}\cap L=\emptyset$, $F_{i_1}$ and $F_{i_2}$ have no component in common.  Note that $F_i\cong \mathbb{P}^1$ is irreducible for all $i$ and if $i\neq 3,4$, then $\tilde{F}_i$ is irreducible since $P\not\in L_i$.  More precisely, since $\tilde{F}_i=\pi_2^*L_i$, if $i\neq j$ then $\tilde{F}_i$ and $\tilde{F}_j$ can share a component only in the case $\{i,j\}=\{3,4\}$, in which case they share the exceptional divisor $E$ (with respect to the map $\pi_2$).  Thus $\tilde{F}_{i_1}$ and $\tilde{F}_{i_2}$ do not have a component in common.  Finally, note that for any $i$, $\pi_2(F_i)$ is a line containing $P$.  So $F_i$ and $\tilde{F}_j$ do not have a component in common if $\{i,j\}\cap \{3,4\}=\emptyset$.  It follows that $\psi^*D_{i_1}$ and $\psi^*D_{i_2}$ do not have any components in common.

Suppose now that, say, $\{i_1,i_2\}=\{3,i\}$.  The same argument as above shows that $F_i$ and $F_3$ have no common component, $\tilde{F}_i$ and $\tilde{F}_3$ have no common component, and $\tilde{F}_i$ and $F_3$ have no common component.  However, it may happen that $F_i$ and $\tilde{F}_3$ have a component in common.  We have $\tilde{F}_3=L_3'+E$, where $L_3'$ is the strict transform of $L_3$ (with respect to the map $\pi_2$) and $L_3'$ may be a common component of $F_i$ and $\tilde{F}_3$.  A similar conclusion holds if $4\in \{i_1,i_2\}$.

It follows from the above that $\gcd(\psi^*D_{i_1},\psi^*D_{i_2})+\gcd(\psi^*D_{i_3},\psi^*D_{i_4})\leq L_3'+L_4'$.  Using this fact and applying \eqref{fundineq} twice, we find that
\begin{equation*}
\sum_{v\in S}\sum_{i=1}^4 \lambda_{\psi^*D_{i,v},v}(P)-m_{L_3',S}(P)-m_{L_4',S}(P)<\left(6+\epsilon\right)h_{\psi^*A}(P)+O(1)
\end{equation*}
for all $P\in W(k)$ outside a finite union of curves.  Note that $\psi^*A-L_3'-L_4'\sim E$, which is effective.  So 
\begin{equation*}
m_{L_3',S}(P)+m_{L_4',S}(P)<h_{L_3'}(P)+h_{L_4'}(P)+O(1)<h_{\psi^*A}(P)+O(1)
\end{equation*}
for all $P\in W(\kbar)\setminus (E\cup L_3'\cup L_4')$.  Then
\begin{equation*}
\sum_{v\in S}\sum_{i=1}^4 \lambda_{\psi^*D_{i,v},v}(P)<\left(7+\epsilon\right)h_{\psi^*A}(P)+O(1)
\end{equation*}
for all $P\in W(k)$ outside a finite union of curves.  Since $\psi$ is birational, using functoriality we obtain that
\begin{equation*}
\sum_{v\in S}\sum_{i=1}^4 \lambda_{D_{i,v},v}(P)<\left(7+\epsilon\right)h_A(P)+O(1),
\end{equation*}
for all $P\in V(k)$ outside a finite union of curves as was to be shown.\\

Case III:  The general case (not Case I or Case II).\\

Consider $W=\psi^{-1}(V)\subset \mathbb{P}^2\times \mathbb{P}^2$ and the projection on to the first factor $\pi_1:W\to \mathbb{P}^2$.  
%We first claim that $\pi_1$ realizes $W$ as the blow-up of $\mathbb{P}^2$ at three non-collinear points.  
The map $\pi_1$ is one-to-one above $(x,y,z)$ unless the matrix
\begin{equation*}
\left(
\begin{array}{ccc}
f_1x+f_4y & f_4x+f_2y & f_3z\\
g_1x+g_4y & g_4x+g_2y & 0
\end{array}
\right)
\end{equation*}
has rank one.  Since we are not in Case I or Case II, $g_1x+g_4y$ and $g_4x+g_2y$ are linearly independent and $f_3\neq 0$.  This implies that $\pi_1$ is one-to-one above $(x,y,z)$ unless $x=y=0$ or $z=(f_1x+f_4y)(g_4x+g_2y)-(f_4x+f_2y)(g_1x+g_4y)=0$.  Similarly, the projection onto the second factor $\pi_2:W\to \mathbb{P}^2$ is one-to-one above $(\tilde{x},\tilde{y},\tilde{z})$ unless $\tilde{x}=\tilde{y}=0$ or $\tilde{z}=(f_1\tilde{x}+f_4\tilde{y})(g_4\tilde{x}+g_2\tilde{y})-(f_4\tilde{x}+f_2\tilde{y})(g_1\tilde{x}+g_4\tilde{y})=0$.
%We distinguish two subcases, depending on whether or not $z=0, (f_1x+f_4y)(g_4x+g_2y)-(f_4x+f_2y)(g_1x+g_4y)=0$ defines two distinct points.
%Case IIIA:  Suppose that $(f_1x+f_4y)(g_4x+g_2y)-(f_4x+f_2y)(g_1x+g_4y)$ is squarefree.  

We assume now that $z=0, (f_1x+f_4y)(g_4x+g_2y)-(f_4x+f_2y)(g_1x+g_4y)=0$, defines two distinct points (the case where it defines a single point is even easier).  After possibly extending $k$, we can assume without loss of generality that the two points are $k$-rational.  Let $P=(0,0,1)\in \mathbb{P}^2$ and let $Q,R\in\mathbb{P}^2$ be the two distinct points defined by $z=0, (f_1x+f_4y)(g_4x+g_2y)-(f_4x+f_2y)(g_1x+g_4y)=0$.  Since the lines $L_i$ are in general position, aside from the line $L_3$ given by $z=0$, there is at most one line $L_j$ passing through $Q$ and at most one line $L_k$ passing through $R$.  We will prove the case $\{L_j,L_k\}\cap \{L_1,L_2\}=\emptyset$ (the case $\{L_j,L_k\}\cap \{L_1,L_2\}\neq\emptyset$ being almost entirely similar).  Then we may assume, without loss of generality, that the line $L_4$ contains $Q$ and the line $L_5$ contains $R$ (we may add such lines to $\{L_1,\ldots, L_q\}$ if they don't exist).

%Similar to Case II, it is easy to see that $\pi_1$ realizes $W$ as the blow-up of $\mathbb{P}^2$ at the three points $P$, $Q$, and $R$.  We note also that $P$, $Q$, and $R$ are not collinear.  Let $E_1, E_2, E_3$ denote the three exceptional curves above $P$, $Q$, and $R$.  We also have three other $-1$-curves on $X$ given by the strict transforms of the three lines that go through two of the three points $P$, $Q$, and $R$.  Denote these lines by $F_1$, $F_2$, and $F_3$.  By symmetry, the map $\pi_2$ also gives $W$ as the blow-up of $\mathbb{P}^2$ at three points, and in fact, $\pi_2$ corresponds to the map blowing down the exceptional curves $F_1$, $F_2$, and $F_3$.  Let $P'$, $Q'$, and $R'$ denote the correspdonding points lying below $F_1$, $F_2$, and $F_3$.  Note that $l_1$ and $l_2$ pass through $P$ and $l_3$ contains both $Q$ and $R$.  Thus, there are no other lines $l_i$ passing through $P$ and at most one other line $l_j$ passing through $Q$ (with $l_j'$ passing through $Q'$) and at most one other line $l_k$ passing through $R$ (with $l_k'$ passing through $R'$).  The worst case will be when these two potential lines exist, which we'll now assume.  Let the corresponding divisors be $D_5$ and $D_6$.

We claim that:
\begin{enumerate}
\item  For $i\geq 6$, no divisor $D_i$ has a component in common with any other divisor $D_j$, $j\neq i$.\label{3a}
\item  $D_3$ has components in common with each $D_i$, $i\leq 5$.\label{3b}
\item  $D_1$, $D_2$, and $D_3$ all have a component in common.\label{3c}
\item  If $1\leq i<j\leq 5$ and $D_i$ and $D_j$ are not as in \eqref{3b} or \eqref{3c}, then $D_i$ and $D_j$ have no components in common.\label{3d}
\end{enumerate}

We will consider the projections of $\psi^*D_i=E_i+\tilde{E}_i$, where $E_i=(L_i\times \mathbb{P}^2)\cap W$ and $\tilde{E}_i=(\mathbb{P}^2\times L_i)\cap W$.  We first note that $\pi_1(E_i)=L_i$ and $\pi_2(\tilde{E}_i)=L_i$.  Recall that $\pi_1$ and $\pi_2$ are isomorphisms above $\mathbb{P}^2\setminus \{P,Q,R\}$ (in fact, $\pi_1$ and $\pi_2$ both realize $W$ as the blow-up of $\mathbb{P}^2$ at the three points $P,Q,$ and $R$;  the map $\pi_2\circ \pi_1^{-1}$ is a plane Cremona transformation).  In particular, it follows that for $i\geq 6$, $E_i$ and $\tilde{E}_i$ are irreducible.  For two points $P_1,P_2\in \mathbb{P}^2$, let $L_{P_1P_2}$ denote the line through $P_1$ and $P_2$.  Then we claim that $\pi_1^{-1}(P)=\{P\}\times L_{QR}=\{P\}\times L_3, \pi_1^{-1}(Q)=\{Q\}\times L_{PR},$ and $\pi_1^{-1}(R)=\{R\}\times L_{PQ}$.  From the definitions of $P, Q,$ and $R$, it's clear that $\pi_1^{-1}(P), \pi_1^{-1}(Q),$ and $\pi_1^{-1}(R)$ are all of the form $\{{\rm pt}\} \times L$ for some point ${\rm pt}$ and some line $L$.  Then a direct calculation using the relevant equations shows that $(P,Q), (P,R),(Q,R)\in W$, and our claim follows.  Note that 
\begin{equation*}
\pi_1^{-1}(P)\cap \tilde{E}_i=(\{P\}\times L_{QR})\cap (\mathbb{P}^2\times L_i)=\{P\}\times (L_{QR}\cap L_i) \neq \emptyset.  
\end{equation*}
Similarly, $\pi_1^{-1}(T)\cap \tilde{E}_i\not=\emptyset$ for $T\in \{P,Q,R\}$ and so $P,Q,R\in \pi_1(\tilde{E}_i)$ for all $i$ (in fact, for $i\geq 6$, $\pi_1(\tilde{E}_i)$ is an irreducible conic containing $P,Q,$ and $R$).  Therefore, $\pi_1(\tilde{E}_i)\not\subset \pi_1(E_j)$ for $i\neq j$.  Since we also have  $\pi_1(E_i)\not\subset \pi_1(E_j)$, $i\neq j$, this implies that if $i\geq 6$ and $i\neq j$, then $\psi^*D_i$ and $\psi^*D_j$ have no irreducible components in common.  Part \eqref{3a} follows.  The divisors $E_1,E_2,\tilde{E}_3$ share the component $x=y=\tilde{z}=0$.  The divisors $E_3$ and $E_4$ share the component $\pi_1^{-1}(Q)$ and $E_3$ and $E_5$ share the component $\pi_1^{-1}(R)$.  This proves \eqref{3b} and \eqref{3c}.  By the same argument as for \eqref{3a}, one sees that $D_4$ doesn't share components with $D_1$, $D_2$, or $D_5$, and similarly $D_5$ doesn't share components with $D_1$ or $D_2$.

Now, if we exclude the divisor $D_3$, then only $D_1$ and $D_2$ share a component.  It follows that for any distinct $r,s,t,u\in \{1,\ldots,q\}$, there is a partition $\{r,s,t,u\}=\{i_1,i_2\}\cup \{i_3,i_4\}$, such that $\lcm(D_{i_1},D_{i_2})+\lcm(D_{i_3},D_{i_4})\geq D_i+D_j+D_k+D_l-D_3$.  Appropriately applying \eqref{fundineq} twice, we then obtain
\begin{equation*}
\sum_{v\in S}\sum_{i=1}^4 \lambda_{D_{i,v},v}(P)-m_{D_3,S}(P)<\left(6+\epsilon\right)h_A(P)+O(1)
\end{equation*}
for all $P\in V(k)$ outside a finite union of curves.  Since
\begin{equation*}
m_{D_3,S}(P)<h_{D_3}(P)+O(1)=h_A(P)+O(1),
\end{equation*}
we find that
\begin{equation*}
\sum_{v\in S}\sum_{i=1}^4 \lambda_{D_{i,v},v}(P)<\left(7+\epsilon\right)h_A(P)+O(1).
\end{equation*}
for all $P\in V(k)$ outside a finite union of curves.
%Case IIIB:  Suppose that $(f_1x+f_4y)(g_4x+g_2y)-(f_4x+f_2y)(g_1x+g_4y)$ is a perfect square.

\end{proof}

\section{Conjectures}
\label{scon}

In this section we discuss some conjectural Schmidt-Wirsing type inequalities and some related examples.  For simplicity, we state conjectures only for hyperplanes in general position in projective space.  This special case seems representative and is likely to be quite difficult already.

We now state the main conjecture, which as noted below is not essentially new.

\begin{conjecture}
\label{conj}
Let $S$ be a finite set of places of a number field $k$.  Let $H_1,\ldots, H_q$ be hyperplanes over $k$ in $\mathbb{P}^n$ in general position.  Let $D=\sum_{i=1}^qH_i$.  Let $\delta$ be a positive integer and let $\epsilon>0$.
\begin{enumerate}
\label{gena}
\item  There exists a proper Zariski-closed subset $Z\subset \mathbb{P}^n$ such that the inequality
\begin{equation*}
m_{D,S}(P)<\left(2\delta+n-1+\epsilon\right)h(P)+O(1)
\end{equation*}
holds for all $P\in \mathbb{P}^n(\kbar)\setminus Z$ satisfying $[k(P):k]\leq \delta$.
\item  The inequality
\label{genb}
\begin{equation*}
m_{D,S}(P)<(2\delta n+\epsilon)h(P)+O(1)
\end{equation*}
holds for all $P\in \mathbb{P}^n(\kbar)\setminus \Supp D$ satisfying $[k(P):k]\leq \delta$.
\end{enumerate}
\end{conjecture}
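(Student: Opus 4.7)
My plan is to reduce both parts of the conjecture, via the symmetric-power construction of Theorem~\ref{compth}, to Diophantine approximation statements for $k$-rational points on $\Sym^\delta \mathbb{P}^n$. Associating to each $P$ of degree $\delta$ over $k$ the $k$-rational point $\phi(\psi(P))\in \Sym^\delta \mathbb{P}^n$ and setting $H_i^{(\delta)} = \phi_* \pi_1^* H_i$, one has $H_i^{(\delta)}\equiv B$ for a common ample divisor $B$. Since at most $n$ of the $H_i$ contain any fixed point of $\mathbb{P}^n$ (by general position), at most $\delta n$ of the $H_i^{(\delta)}$ contain any fixed point of $\Sym^\delta \mathbb{P}^n$, so the $H_i^{(\delta)}$ lie in $(\delta n)$-subgeneral position on a variety of dimension $\delta n$. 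Parts (a) and (b) then reduce, respectively, to the Schmidt-type inequalities
\begin{equation*}
\sum_i m_{H_i^{(\delta)},S}(Q) < (2\delta + n - 1 + \epsilon)h_B(Q) + O(1) \qquad (Q\in \Sym^\delta \mathbb{P}^n(k)\setminus Y),
\end{equation*}
and the analogous inequality with constant $2\delta n$ valid outside $\Supp \sum_i H_i^{(\delta)}$.

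For part (b), the $(\delta n)$-subgeneral position lets one replace the full sum by a sum over the $\delta n$ divisors $H_i^{(\delta)}$ closest to $Q$ at each place, in the manner of the reduction to ``four divisors'' in the proof of Theorem~\ref{thmain}. The desired constant $2\delta n = 2 \cdot (\delta n)$ should then arise by distributing a factor of $2$ to each such divisor---morally the Wirsing/Song--Tucker factor of Theorem~\ref{corV} applied along curve-like slices. For part~(a), I would proceed by induction on the dimension of the exceptional set, as in the implication $A(m,n)\Rightarrow B(l,m,n)$ of Theorem~\ref{compth}: Theorem~\ref{mev} applied on $\Sym^\delta \mathbb{P}^n$ first gives the bound $\delta n + 1 \leq 2\delta + n - 1$ outside some proper closed $Y_0 \subset \Sym^\delta\mathbb{P}^n$, and on each irreducible component $V\subset Y_0$ one stratifies according to how many of the $H_i^{(\delta)}|_V$ share components, then combines the $\lcm$-estimate \eqref{fundineq} (in the spirit of the Case~I/II/III analysis of Section~\ref{salg}) with Theorem~\ref{corV} on the bottom curve strata. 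The natural decomposition $2\delta + n - 1 = 2\delta + (n-1)$ suggests a vertical-horizontal splitting, with a Wirsing contribution $2\delta$ along fibers of a rational projection $\Sym^\delta\mathbb{P}^n \dashrightarrow \Sym^\delta \mathbb{P}^{n-1}$ and a Schmidt contribution $n-1$ on the base.

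The main obstacle is the combinatorial complexity of the stratification of $\Sym^\delta\mathbb{P}^n$ by loci along which the $H_i^{(\delta)}$ fail to be in general position. Even the case $\delta = n = 2$ (Theorem~\ref{thmain}) required a delicate case-by-case analysis of surfaces $V\subset \Sym^2\mathbb{P}^2$ cut out by pairs of quadrics, and for general $\delta$ and $n$ the number of essentially different configurations grows rapidly. Moreover, the estimates on each stratum must be sharp---tight enough to beat the generic $(\delta n)$-subgeneral bound of Theorem~\ref{degen}, which is of order $(\delta n)^2/2$ rather than $2\delta n$. A conceptual resolution would likely require either a Nochka-style system of weights adapted to the symmetric-power setting, or a Ru--Vojta inequality with a Seshadri-type constant tailored to $\Sym^\delta \mathbb{P}^n$, and either approach is a substantial undertaking in its own right.
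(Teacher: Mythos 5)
This statement is a \emph{conjecture} in the paper, not a theorem; the paper offers no proof of it. The paper's discussion of Conjecture~\ref{conj} consists only of (i) the observation that part~(a) would follow from Vojta's general conjecture (itself deep and open) combined with Silverman's bound $d(P)\leq (2\delta-2)h(P)+O(1)$, (ii) the attribution of part~(b) to Ru, and (iii) examples (notably Example~\ref{exinf}) showing the constants $2\delta+n-1$ and $2\delta n$ are optimal. The strongest result the paper actually proves in this direction is Theorem~\ref{genth}, whose constant $(\delta n)^2(\delta n-1)/(2\delta n - 3)$ grows like $(\delta n)^2/2$, far above the conjectured linear bound $2\delta n$. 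Your proposal cannot be compared against ``the paper's own proof'' because there is none, and your proposal itself is not a proof --- as you candidly acknowledge in your final paragraph.

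Beyond being incomplete, the sketch contains a concrete error worth flagging. You assert that ``Theorem~\ref{mev} applied on $\Sym^\delta\mathbb{P}^n$ first gives the bound $\delta n + 1 \leq 2\delta + n - 1$ outside some proper closed $Y_0$.'' This fails on two counts. First, the arithmetic: $\delta n + 1 \leq 2\delta + n - 1$ is equivalent to $(\delta-1)(n-2)\leq 0$, which is \emph{false} for $\delta\geq 2$ and $n\geq 3$ (e.g.\ $\delta=2$, $n=3$ gives $7>6$). Second, and more fundamentally, Theorem~\ref{mev} requires the divisors to be in \emph{general} position, whereas the pushed-forward divisors $H_i^{(\delta)}$ on $\Sym^\delta\mathbb{P}^n$ are only in $(\delta n)$-subgeneral position (indeed, $\binom{n+\delta}{\delta}$ of them always pass through a fixed diagonal point when $q$ is large). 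The correct tool on the generic locus is therefore Theorem~\ref{degen}, which is what produces the much weaker quadratic constant in Theorem~\ref{genth}. So the ``first step'' of your inductive plan does not furnish the bound you need even on the open stratum, and the gap between $(\delta n)^2/2$ and $2\delta n$ is exactly what remains open. Your identification of the symmetric-power reduction, the subgeneral-position count, and the need for a Nochka-type weighting or a Ru--Vojta inequality adapted to $\Sym^\delta\mathbb{P}^n$ is all consistent with the paper's framework, but none of it closes the conjecture.
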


Part (a) of the conjecture follows easily from Vojta's conjecture, which in this context states that there exists a proper Zariski-closed subset $Z\subset \mathbb{P}^n$ such that
\begin{equation}
\label{Vineq}
m_{D,S}(P)<(n+1+\epsilon)h(P)+d(P)+O(1)
\end{equation}
for all $P\in \mathbb{P}^n(\kbar)\setminus Z$ satisfying $[k(P):k]\leq \delta$.  Here $d(P)=\frac{1}{[\mathbb{Q}(P):\mathbb{Q}]}\log |D_{\mathbb{Q}(P)/\mathbb{Q}}|$, where $D_{k/\mathbb{Q}}$ denotes the absolute discriminant of $k$.  Part (a) now follows from \eqref{Vineq} and an inequality of Silverman \cite{Sil2} that states that
\begin{equation*}
d(P)\leq (2\delta-2)h(P)+O(1)
\end{equation*}
for all points $P\in \mathbb{P}^n(\kbar)$ satisfying $[k(P):k]\leq \delta$.  Part (b) of the conjecture was conjectured by Ru (Conjecture 4.4 of \cite{Ru2} with $l=1$).

We now give some examples related to the conjecture.  Let $\delta$ be a positive integer.  We will repeatedly use the fact that for any $2\delta$ points $P_1,\ldots, P_{2\delta}\in \mathbb{P}^1(k)$ and any finite set of places $S$ of $k$ containing the archimedean places with $|S|\geq 2$, there exists an infinite set of points $R\subset \{P\in \mathbb{P}^1(\kbar)\mid [k(P):k]\leq \delta\}$ that is $S$-integral with respect to $P_1,\ldots, P_{2\delta}$, i.e., for each $i$,
\begin{equation*}
m_{P_i,S}(P)=h(P)+O(1), \quad \forall P\in R,
\end{equation*}
or equivalently,
\begin{equation*}
\sum_{i=1}^{2\delta}m_{P_i,S}(P)=2\delta h(P)+O(1), \quad \forall P\in R.
\end{equation*}
Explicitly, if $\phi:\mathbb{P}^1\to\mathbb{P}^1$ is a morphism over $k$ satisfying $\phi^*(0)=\sum_{i=1}^\delta P_i$ and $\phi^*(\infty)=\sum_{i=\delta+1}^{2\delta} P_i$, then we may take $R=\phi^{-1}(\co_{k,S}^*)$, identifying $\co_{k,S}^*\subset \mathbb{A}^1\subset \mathbb{P}^1$ in the usual way.

We first give an example showing that, in contrast to the Schmidt Subspace Theorem, one must allow varieties other than hyperplanes in the exceptional set $Z$ in part (a).

\begin{example}
\label{excon}
Let $L_1,\ldots, L_8\subset \mathbb{P}^2$ be lines over a number field $k$ in general position and $C$ a conic over $k$ in $\mathbb{P}^2$ such that $C$ contains $L_i\cap L_{i+4}$, $i=1,2,3,4$, and $L_i$ is tangent to $C$ for $i=1,2,3,4$.  Let $\{P_i\}=L_i\cap C=L_{i+4}\cap C$, $i=1,2,3,4$.  Let $S$ be a finite set of places of $k$ containing the archimedean places and satisfying $|S|\geq 2$.  Then the conic $C$ contains an infinite set of quadratic points $R$ that are $S$-integral with respect to $P_1,\ldots, P_4$, that is,
\begin{equation*}
\sum_{i=1}^4m_{P_i,S}(P)=4h_Q(P)+O(1)
\end{equation*}
and $[k(P):k]\leq 2$ for all $P\in R$, where $Q$ is some $k$-rational point on $C$.  Let $\iota:C\to \mathbb{P}^2$ be the inclusion morphism.  Then for all $P\in R$,
\begin{equation*}
\sum_{i=1}^8m_{L_i,S}(\iota(P))=\sum_{i=1}^8m_{\iota^*L_i,S}(P)=3\sum_{i=1}^4m_{P_i,S}(P)=12h_Q(P)+O(1).
\end{equation*}
Since $h(\iota(P))=2h_Q(P)+O(1)$ for all $P\in C(\kbar)$, we find that 
\begin{equation*}
\sum_{i=1}^8m_{L_i,S}(\iota(P))=6h(\iota(P))+O(1)
\end{equation*}
for all $P\in R$.  It follows that $C$ must be in the exceptional set $Z$ in Conjecture \ref{conj} (a) for the lines $L_1,\ldots, L_8$ and $\delta=2$.
\end{example}

We now give examples showing that the quantities $2\delta+n-1$ and $2\delta n$ in the conjectures cannot be replaced by any smaller numbers.

\begin{example}
Let $\delta$ and $n\geq 2$ be positive integers.  Let $H_1,\ldots, H_{n+2\delta-1}\subset \mathbb{P}^n$ be hyperplanes in general position over a number field $k$.  Let $\{P\}=\cap_{i=1}^nH_i$ and let $L$ be a line through $P$ over $k$.  Let $\cup_{i=1}^{n+2\delta-1}(L\cap H_i)=\{P_1,\ldots, P_t\}$, where $t\leq 2\delta$.  Let $S$ be a finite set of places of $k$ containing the archimedean places and satisfying $|S|\geq 2$.  Let $R\subset \mathbb{P}^1(\kbar)$ be an infinite set of points satisfying $m_{P_i,S}(P)=h(P)+O(1)$, $i=1,\ldots, t$, and $[k(P):k]\leq \delta$ for all $P\in R$.  Then
\begin{equation*}
\sum_{i=1}^{n+2\delta-1}m_{H_i,S}(P)=(n+2\delta-1)h(P)+O(1)
\end{equation*}
for all $P\in R$.  So there exist infinitely many lines $L\subset \mathbb{P}^n$, with Zariski-dense union in $\mathbb{P}^n$, each containing infinitely many points $P$ satisfying
\begin{equation*}
\sum_{i=1}^{n+2\delta-1}m_{H_i,S}(P)=(n+2\delta-1)h(P)+O_L(1)
\end{equation*}
and $[k(P):k]\leq \delta$.  It follows that for any $\epsilon>0$, there exists a Zariski-dense set of points $P\in \mathbb{P}^n(\kbar)$ satisfying
\begin{equation*}
\sum_{i=1}^{n+2\delta-1}m_{H_i,S}(P)>(n+2\delta-1-\epsilon)h(P)
\end{equation*}
and $[k(P):k]\leq \delta$.
\end{example}

\begin{example}
\label{exinf}
Let $\delta$ and $n\geq 2$ be positive integers.  Let $H_1,\ldots, H_{2\delta n}\subset \mathbb{P}^n$ be hyperplanes in general position over a number field $k$ such that $\bigcap_{i=(j-1)n+1}^{jn}H_i=\{P_j\}$ consists of a single point for $j=1,\ldots,2\delta$, and $P_1,\ldots, P_{2\delta}$ all lie on a line $L$ over $k$.  Then by the same argument as in the previous example there exists a finite set of places $S$ of $k$ and an infinite set of points $R\subset L(\kbar)\subset \mathbb{P}^n(\kbar)$ such that 
\begin{equation*}
\sum_{i=1}^{2\delta n}m_{H_i,S}(P)=2\delta nh(P)+O(1) 
\end{equation*}
and $[k(P):k]\leq \delta$ for all $P$ in $R$.  
\end{example}

In view of Example \ref{excon}, one may also ask for a version of Conjecture \ref{conj} where $Z$ can be taken to be a finite union of hyperplanes.  For $\mathbb{P}^2$, Theorem \ref{thl} suggests:
\begin{conjecture}
Let $S$ be a finite set of places of a number field $k$.  Let $L_1,\ldots, L_q$ be lines over $k$ in $\mathbb{P}^2$ in general position.  Let $D=\sum_{i=1}^qL_i$.  Let $\delta$ be a positive integer and let $\epsilon>0$.  There exists a finite union of lines $Z\subset \mathbb{P}^2$ such that the inequality
\begin{equation*}
m_{D,S}(P)<\left(3\delta+\epsilon\right)h(P)+O(1)
\end{equation*}
holds for all $P\in \mathbb{P}^2(\kbar)\setminus Z$ satisfying $[k(P):k]\leq \delta$.
\end{conjecture}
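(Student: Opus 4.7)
The plan is to adapt the strategy of Theorem \ref{thmain} to arbitrary $\delta$. The first step is to invoke the reduction from Theorem \ref{compth} (the argument giving $C'(\delta,l,m,n)\le B'(l,\delta m,\delta n)$) to convert the conjecture into a Diophantine statement for $k$-rational points on $Y:=\Sym^\delta\mathbb{P}^2$: it suffices to prove
\[
m_{D^{(\delta)},S}(P)<(3\delta+\epsilon)h_{L^{(\delta)}}(P)+O(1)
\]
for all $P\in Y(k)$ outside a finite union of curves $W\subset Y$, where $L^{(\delta)}:=\phi_*\pi_1^*L$ for a fixed line $L$ and $D^{(\delta)}:=\sum_i L_i^{(\delta)}$. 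The dimension bookkeeping in the proof of Theorem \ref{compth} shows that a $1$-dimensional exceptional set on $Y$ descends to a $1$-dimensional exceptional set on $\mathbb{P}^2$; the higher-degree components of the latter can then be absorbed using Theorem \ref{thl}(b), since general position of $L_1,\ldots,L_q$ forces $m=2$, so $(m+1)\delta=3\delta$, leaving only finitely many exceptional lines.

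To establish the statement on $Y$, I would embed $Y\hookrightarrow\mathbb{P}^M$ with $M=\binom{\delta+2}{2}-1$ via the complete linear system $|L^{(\delta)}|$; under this embedding each $L_i^{(\delta)}$ is the pullback of a hyperplane $H_i$ corresponding to the symmetric multilinear form $\prod_{t=1}^\delta l_i(x^{(t)})$. The natural generalization of Lemma \ref{lemcon} relates linear dependencies among these forms to the incidence of the dual points $P_i\in\check{\mathbb{P}}^2$ on curves of degree $\delta$, so general position of the $L_i$ yields strong linear independence among the $H_i$. A key combinatorial observation is that at most $2\delta$ of the divisors $L_i^{(\delta)}$ can contain any given point of $Y$ (each of the $\delta$ unordered coordinates lies on at most two of the $L_j$), so it suffices to bound $\sum_{v\in S}\sum_{j=1}^{2\delta}\lambda_{H_{i_{j,v}},v}(P)$ for any choice of $2\delta$ indices. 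Applying the Evertse--Ferretti theorem (Theorem \ref{EF}) on $Y$ (of dimension $2\delta$) to any $2\delta+1$ divisors $L_i^{(\delta)}$, whose common intersection is empty by the count above, produces a bound of the shape $(2\delta+1+\epsilon)h_{L^{(\delta)}}$. This is already strictly smaller than $3\delta$ for $\delta\ge 2$, but only outside a Zariski-closed exceptional subset of $Y$ of a priori unrestricted dimension (up to $2\delta-1$).

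The main obstacle is therefore not the coefficient but the dimension of the exceptional set: one must reduce it from $2\delta-1$ down to $1$ so that its descent to $\mathbb{P}^2$ is a curve. Following the template of Theorem \ref{thmain}, one should handle each irreducible component $V$ of the exceptional set of dimension $\ge 2$ inductively by (i) analyzing how the divisors $L_i^{(\delta)}|_V$ decompose into shared and distinct components, (ii) passing to a suitable desingularization or modification of $V$, and (iii) applying an Evertse--Ferretti or Schmidt-type estimate on that modification, using the structure of the shared components to obtain a sufficiently strong bound on $V$ outside a lower-dimensional set. For $\delta=2$, this splits into the three tractable cases (I, II, III) of the proof of Theorem \ref{thmain}, hinging on the fact that each surface in $Y=\Sym^2\mathbb{P}^2$ is cut out by at most two low-degree equations in $\mathbb{P}^5$. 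For general $\delta$, the geometry of subvarieties of $Y$ and the combinatorial patterns by which the $L_i^{(\delta)}$ can share components on them grow very rapidly in complexity, and I expect this case analysis --- essentially a classification of the ways in which a subvariety of $\Sym^\delta\mathbb{P}^2$ can fail to be in general enough position with respect to the symmetric powers of $q$ lines --- to be the core obstruction requiring fundamentally new geometric input beyond the case $\delta=2$.
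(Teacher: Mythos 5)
The statement you are attempting to prove appears in the paper only as a \emph{conjecture}; the paper does not give a proof. It records only that the shape of the bound is suggested by Theorem~\ref{thl}(b) (with $m=2$, giving $(m+1)\delta = 3\delta$) and that Example~\ref{excon} shows $3\delta$ cannot be lowered when $\delta=2$. Thus there is no argument in the paper to compare your proposal against, and you candidly acknowledge in your last paragraph that your sketch does not resolve the conjecture.

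The first part of your sketch is sound and tracks the paper's $\delta=2$ methodology faithfully: the reduction from degree-$\delta$ algebraic points on $\mathbb{P}^2$ to $k$-rational points on $\Sym^\delta\mathbb{P}^2$, the observation that each point of $\Sym^\delta\mathbb{P}^2$ lies on at most $2\delta$ of the divisors $L_i^{(\delta)}$ (so any $2\delta+1$ of them have empty intersection), and the application of Evertse--Ferretti (Theorem~\ref{EF}) on a $2\delta$-dimensional variety to obtain a bound of roughly $(2\delta+1+\epsilon)h_{L^{(\delta)}}$, which is below $3\delta$ for $\delta\ge 2$. The genuine gap is exactly the one you flag: Theorem~\ref{EF} gives no control on the dimension of its exceptional set, while to descend to a finite union of \emph{lines} in $\mathbb{P}^2$ (after trimming higher-degree exceptional curves via Theorem~\ref{thl}(b)) you must first bring the exceptional set in $\Sym^\delta\mathbb{P}^2$ down to dimension one. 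Iterating over components of the exceptional set is where the constant degrades, because on a proper subvariety $V$ the restrictions $L_i^{(\delta)}|_V$ can share components, ruining the subgeneral-position count. Even for $\delta=2$, where the paper carries out a complete classification of the problematic surfaces $V$ (Cases I--III in the proof of Theorem~\ref{thmain}), it only attains $\tfrac{15}{2}$ rather than the conjectured $6$; no such classification is available for general $\delta$. Your proposal therefore identifies the right framework and the right obstruction, but does not (and does not claim to) close the gap.
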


Example \ref{excon} shows that, at least for $\delta=2$, the quantity $3\delta$ in the conjecture cannot be replaced by a smaller number.

\bibliography{AlgebraicSchmidt}
\end{document}